\numberwithin{equation}{section} 
\numberwithin{theorem}{section}
\numberwithin{proposition}{section}
\numberwithin{lemma}{section}
\numberwithin{corollary}{section}
\newtheorem{Theorem}{Theorem}[section]
\newtheorem{Corollary}[Theorem]{Corollary}
\newtheorem{Proposition}[Theorem]{Proposition}
\def\1{^{-1}}
\def\a{\mathfrak{a}}
\def\al{\alpha}
\def\be{\beta}
\def\bi{{\bar\imath\,}}
\def\bj{{\bar\jmath\,}}
\def\bk{{\bar k\,}}
\def\bl{{\bar l\,}}
\def\bp{{\bar p\,}}
\def\br{{\bar r\,}}
\def\bs{{\bar s\,}}
\def\CC{{\mathbb{C}}}
\def\CMN{\CC^{\ts M|N}}
\def\de{\delta}
\def\De{\Delta}
\def\End{\operatorname{End}\hskip1pt}
\def\ep{\varepsilon}
\def\g{\glMN\ts[u]}
\def\ge{\geqslant}
\def\glMN{\mathfrak{gl}_{\ts M|N}}
\def\gr{{\rm gr}\ts}
\def\grpr{{\rm gr}^{\,\prime}\ts}
\def\id{{\mathrm{id}}}
\def\lc{{\ts,\hskip1pt\ldots\ts,\ts}}
\def\le{\leqslant}
\def\ns{{\hskip-.5pt}}
\def\om{\omega}
\def\ot{\otimes}
\def\RS{R^{\,\sharp}}
\def\S{\operatorname{S}}
\def\si{\sigma}
\def\str{\operatorname{str}}
\def\Sym{\mathfrak{S}}
\def\TD{\dot{T}}
\def\TP{T^{\,\prime}}
\def\TS{T^{\,\sharp}}
\def\ts{{\hskip.5pt}}
\def\Ua{\operatorname{U}(\a\ts[u])}
\def\UMN{\operatorname{U}(\glMN)}
\def\Ug{\operatorname{U}(\g)}
\def\XP{X^{\,\prime}}
\def\YMN{\operatorname{Y}(\glMN)}
\def\ZZ{\mathbb{Z}}
\begin{document}


\FirstPageHeading

\ShortArticleName{Yangian of the general linear Lie superalgebra}

\ArticleName{Yangian of the general linear Lie superalgebra}

\Author{Maxim NAZAROV}

\AuthorNameForHeading{Maxim Nazarov}

\Address{Department of Mathematics, University of York, 
York YO10 5DD, United Kingdom}

\bigskip

\Abstract{We prove several basic properties of
the Yangian of the Lie superalgebra $\glMN\,$.}

\Keywords{Berezinian; Hopf superalgebra; Yangian}

\Classification{16T20; 17B37; 81R50} 

\makeatletter
\def\special{\xdef\@thefnmark{}\@footnotetext}
\makeatother

\special{This paper is a contribution to the Special Issue on 
Representation Theory and Integrable Systems in honor of Vitaly Tarasov 
on the 60th birthday and Alexander Varchenko on the 70th birthday. 
The full collection is available at
\href{http://www.emis.de/journals/SIGMA/Tarasov-Varchenko.html}
{http://www.emis.de/journals/SIGMA/Tarasov-Varchenko.html}}


\section*{Introduction}

Let $\glMN$ be the general linear Lie superalgebra over the complex field
$\CC\,$. We will
assume that at least one of the non-negative integers $M$ and $N$
is not zero. The Yangian of $\glMN$ has been introduced in \cite{N1}
by extending the definition of the Yangian of the general linear Lie
algebra $\mathfrak{gl}_{\ts M}\,$, see for instance \cite{MNO}. 
We will denote this extension by $\YMN\,$. 
It is a deformation of
the universal enveloping algebra $\Ug$
of the polynomial current 
Lie superalgebra $\g$ in the class of Hopf superalgebras.
The definition of 
$\YMN$ is reviewed in our Section \ref{sec:1}. 

In our Section \ref{sec:1} we will define two ascending filtrations
on the associative algebra $\YMN\,$. The
graded algebra associated with the first filtration
is supercommutative. We prove
that its elements corresponding to the defining
generators \eqref{Tijr} of $\YMN$ are free generators
of this supercommutative algebra.
The graded algebra associated with the second ascending
filtration is isomorphic to $\Ug\,$.
We prove this by using the representation theory of $\YMN\,$. 
Our proof follows \cite{N2} where the Yangian of the queer Lie superalgebra 
$\mathfrak{q}_{\ts M}\subset\mathfrak{gl}_{\ts M|M}$ was studied.
The freeness of the supercommutative graded algebra
associated with the first filtration~on~$\YMN$ 
follows from this isomorphism.
Another proof of the freeness property was given in \cite{G2}. 

Two different families of central elements of $\YMN$
have been defined in \cite{N1}. The definition of the first family uses
the Hopf superalgebra structure on $\YMN\,$. 
This definition is reviewed in our Section \ref{sec:2}. 
It was conjectured in \cite{N1} that the first family 
generates the centre of $\YMN\,$.
Shortly after the publication of \cite{N1}
this conjecture was proved by the author.
The method of that proof was then used in \cite{MNO}
where the Yangian of $\mathfrak{gl}_{\ts M}$ was considered.
This method was also used in \cite{G2,N2}.
We include the original 
proof of this conjecture for $\YMN$ in our Section~\ref{sec:2}.  

The second definition extends the notion of a quantum determinant
for the Yangian of $\mathfrak{gl}_{\ts M}\,$, see again \cite{MNO}
and references therein. This definition is reviewed in our Section \ref{sec:3}.
The main result of \cite{N1} was the relation between the two
families of central elements of $\YMN\,$. However only a summary of
the proof of this relation was given in \cite{N1}
while the details were left unpublished.
The main purpose of the present article is to publish the detailed original 
proof of this relation.

Since the Yangian $\YMN$ was introduced in \cite{N1} it has been studied
by several other authors. Here
we do not not aim to review the literature.
Still let us mention the work \cite{G1}
which contains a direct proof of the centrality
of the elements of $\YMN$ from our second family.
Let us also mention the work \cite{T} which
provides a generalization of $\YMN$ to arbitrary parity sequences.


\section{Definition of the Yangian}
\label{sec:1}

Throughout this article we will use the following general conventions.
Let $\mathrm{A}$ and $\mathrm{B}$ be any two associative
$\ZZ_{\ts2}$-graded algebras. Their tensor product
$\mathrm{A}\ot\mathrm{B}$ is also
an associative $\ZZ_2$-graded algebra such that for any homogeneous
elements
$X,X^\prime\in\mathrm{A}$ and $Y,Y^\prime\in\mathrm{B}$
\begin{align}
\label{conv1}
(X\ot Y)\ts (X^\prime\ot Y^\prime)&=X\ts X^\prime\ot Y\ts Y^\prime
\,(-1)^{\ts\deg X^\prime\deg Y},
\\
\label{conv2}
\deg\ts(X\ot Y)&=\deg X+\deg Y\ts.
\end{align}
For any $\ZZ_2$-graded modules
$U$ and $V$ over $\mathrm{A}$ and $\mathrm{B}$ respectively,
the vector space $U\ot V$ is a $\ZZ_2$-graded module over
$\mathrm{A}\ot\mathrm{B}$ such that for any homogeneous elements
$x\in U$ and $y\in V$
\begin{align}
\label{XY}
(X\ot Y)\ts (\ts x\ot y\ts )&=X\ts x\ot Y\ts y
\,(-1)^{\ts\deg x\,\deg Y},
\\
\label{xy}
\deg\ts(\ts x\ot y\ts)&=\deg x+\deg y\,.
\end{align}
A homomorphism $\al:\mathrm{A}\to\mathrm{B}$ is a linear map
such that $\al\ts(X\,X^\prime)=\al\ts(X)\,\al\ts(X^\prime)$ for all 
$X,X^\prime\in\mathrm{A}\,$. But an antihomomorphism
$\be:\mathrm{A}\to\mathrm{B}$ is a linear map
such that for all homogeneous $X,X^\prime\in\mathrm{A}$ 
\begin{equation}
\label{bb}
\be\ts(X\ts X^\prime\ts)=
\be\ts(X^\prime)\,\be\ts(X)\,(-1)^{\ts\deg X\deg X^\prime}.
\end{equation}

If $\mathrm{A}$ is unital,
let $\iota_{\ts h}$ be 
its embedding into the tensor product $\mathrm{A}^{\ns\ot\ts n}$
as the $h\ts$-th tensor factor\ts:
\begin{equation*}
\iota_{\ts h}(X)=1^{\ot\ts (h-1)}\ot X\ot1^{\ot\ts(n-h)}
\quad\text{for}\quad
h=1\lc n\,.
\end{equation*}
Here $n$ can be any positive integer.
We will also use various embeddings of the algebra
$\mathrm{A}^{\ns\ot\ts m}$ into $\mathrm{A}^{\ns\ot\ts n}$
for any $m=1\lc n$.
For any choice of pairwise distinct indices $h_1\lc h_m\in\{\ts1\lc n\ts\}$
and of an element $X\in\mathrm{A}^{\ns\ot\ts m}$ of the form
$X=X^{(1)}\ot\ldots\ot X^{(m)}$ we will denote
\begin{equation*}
X_{\ts h_1\ldots h_m}=\ts
\iota_{\ts h_1}(X^{(1)})\ts\ldots\,\ts\iota_{\ts h_m}(X^{(m)})
\in\mathrm{A}^{\ns\ot\ts n}.
\end{equation*}
We will then extend the notation $X_{\ts h_1\ldots h_m}$ to all elements
$X\in\mathrm{A}^{\ns\ot\ts m}$ by linearity.

Now let the indices $i\,,j$ run through $1\lc M+N\ts$.
We will always write $\bi=0$ if $1\le i\le M$ and 
$\bi=1$ if $M<i\le M+N\ts$. Consider the
$\ZZ_{\ts2}$-graded vector space $\CMN\ts$.
Let $e_i\in\CMN$ be an element of the standard basis.
The $\ZZ_{\ts2}$-grading on $\CMN$ is defined so that $\deg e_i=\bi\,$.
Let $E_{\ts ij}\in\End\CMN$ be the standard matrix unit, so that 
$E_{\ts ij}\,e_k=\de_{\ts jk}\,e_i\,$. The 
associative algebra $\End\CMN$
is $\ZZ_{\ts2}$-graded so that $\deg E_{\ts ij}=\bi+\ts\bj\,$. 

For any $n$ we can identify
the tensor product $(\End\CMN)^{\ot\ts n}$ with the algebra
$\End((\CMN)^{\ot\ts n})$ acting on the vector space $(\CMN)^{\ot\ts n}$ 
by repeatedly using the conventions \eqref{XY} and \eqref{xy}.

Let us introduce the {\it Yangian\/} of the Lie superalgebra
$\glMN$. This is the complex associative
unital $\ZZ_{\ts2}$-graded algebra $\YMN$ with the countable set of generators 
\begin{equation}
\label{Tijr}
T^{\ts(r)}_{ij}
\quad\text{where}\quad
r=1,2,\,\ldots
\quad\text{and}\quad
i\,,j=1\lc M+N\,.
\end{equation}
The $\ZZ_{\ts2}$-grading on the algebra $\YMN$
is determined by setting $\deg T_{ij}^{\ts(r)}=\bi+\bj$ for $r\ge1$.
To write down defining relations for these
generators we will employ the series
\begin{equation}
\label{3.0}
T_{ij}(u)=
\de_{ij}\cdot1+T_{ij}^{\ts(1)}\ts u^{-1}+T_{ij}^{\ts(2)}\ts u^{-2}+\ldots
\end{equation}
in a formal variable $u$ with coefficients from $\YMN\,$. 
Then for all possible indices $i\,,j\,,k\,,l$
\begin{equation}
\label{3.1}
(u-v)\,
[\,T_{ij}(u)\ts,T_{kl}(v)\,]\,
(-1)^{\ts\,\bi\,\bk\,+\,\bi\,\bl\,+\,\bk\,\bl}=
T_{kj}(u)\,T_{il}(v)-T_{kj}(v)\,T_{il}(u)
\end{equation}
where $v$ is another formal variable.
The square brackets here stand for the supercommutator.
Notice that the series denoted by $T_{ij}(u)$
here and in \cite{N1} differ by the scalar factor 
$(-1)^{\ts\,(\,\bi+1\ts)\,\bj}$.

We will also use the following matrix form of the defining relations
\eqref{3.1}. 
Take the element
\begin{equation}
\label{pop}
P\,=\,\sum_{i,j=1}^{M+N}\,E_{ij}\ot E_{ji}\,(-1)^{\,\bj}
\in(\End\CMN)^{\ot\ts2}\,.
\end{equation}
This element acts on the vector space $(\CMN)^{\ts\ot\ts2}$ so that
$
e_i\ot e_{j}\mapsto e_j\ot e_i\,{(-1)}^{\,\bi\ts\bj}\ts.
$
Here we identify the algebra $(\End\CMN)^{\ot\ts2}$ with the algebra 
$\End((\CMN)^{\ts\ot\ts2})$ by using \eqref{XY}.

For any $n$ let $\Sym_n$ be the symmetric group acting 
on the set $\{1\lc n\}$ by permutations. For each $m=1\lc n-1$
denote by $\si_m$ the element of $\Sym_n$ exchanging $m$ and $m+1\,$.
The group $\Sym_n$ also acts on the vector space $(\CMN)^{\ot\ts n}$.
This action is defined by the assignment 
$\si_m\mapsto P_{m,m+1}$ for each $m\,$.
Here we identify the algebra 
$(\End\CMN)^{\ot\ts n}$ with 
$\End((\CMN)^{\ts\ot\ts n})$ via \eqref{XY},\eqref{xy}.

The rational function $R(u)=1-P\,u\1$ with values in the algebra
$(\End\CMN)^{\ot\ts2}$ is called the {\it Yang R-matrix\/}.
It satisfies the {\it Yang-Baxter equation}
in the algebra $(\End\CMN)^{\ot\ts 3}(u,v,w)$
\begin{equation}
\label{3.6}
R_{\ts12}(u-v)\ts\ts R_{\ts13}(u-w)\ts\ts R_{\ts23}(v-w)=
R_{\ts23}(v-w)\ts\ts R_{\ts13}(u-w)\ts\ts R_{\ts12}(u-v)\,.
\end{equation}
Since $P^2=1\,$, we also have the relation
\begin{equation}
\label{3.52}
R(-u)\,R(u)=1-u^{-2}\,.
\end{equation}

Now combine all the series \eqref{3.0} into the single element
\begin{equation}
\label{tu}
T(u)=\sum_{i,j=1}^{M+N}\,E_{\ts ij}\ot T_{ij}(u)
\in(\End\CMN)\ot\YMN\ts[[u^{-1}]]\,.
\end{equation}
For any $n$ and any $p=1\lc n$ we will denote
\begin{equation}
T_p(u)=\iota_p\ot\id\,(\ts T(u))
\,\in\,
(\End\CMN)^{\ot\ts n}\ot\YMN\,[[u\1]]\,.
\label{3.22}
\end{equation}
By using this notation for $n=2$
the relations \eqref{3.1} can be rewritten as
\begin{equation}
\label{3.3}
(R\ts(u-v)\ot1)\,T_1(u)\,T_2(v)=T_2(v)\,T_1(u)\,(R\ts(u-v)\ot1)\,.
\end{equation}
Namely, after multiplying each side of \eqref{3.3} by $u-v$ it becomes
a relation of series in $u\,,v$ with coefficients in 
$(\End\CMN)^{\ot\ts2}\ot\YMN$
equivalent to the collection of all the relations~\eqref{3.1}.

\begin{Proposition}
\label{m}
An antiautomorphism of\/ 
$\YMN$ can be defined by the assignment
\begin{equation}
\label{M}
T_{ij}(u)\mapsto T_{ij}(-u)\,.
\end{equation}
\end{Proposition}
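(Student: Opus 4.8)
The assignment \eqref{M} sends the generating series $T(u)$ to $T(-u)$, so it suffices to verify that the map defined on generators by $T_{ij}^{\ts(r)}\mapsto(-1)^{\ts r}\,T_{ij}^{\ts(r)}$ is an antiautomorphism; since it is visibly an involution on the generators, once it is shown to respect the defining relations it is automatically bijective, hence an antiautomorphism. So the real content is: the relations \eqref{3.1} are preserved (with the sign twist \eqref{bb} for antihomomorphisms) under $u\mapsto-u$, $v\mapsto-v$, combined with reversing the order of the two factors on each side.

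First I would pass to the matrix form \eqref{3.3}. An antihomomorphism reverses products, so applying it to \eqref{3.3} and using that it fixes the scalar $R$-matrix entrywise (it acts only on the $\YMN$ tensor factor), the image of \eqref{3.3} reads
\begin{equation*}
T_2(-v)\,T_1(-u)\,(R\ts(u-v)\ot1)=(R\ts(u-v)\ot1)\,T_1(-u)\,T_2(-v)\,,
\end{equation*}
up to the sign factors dictated by \eqref{conv1} and \eqref{bb}; here one must be careful that the supercommutator in \eqref{3.1} carries the sign $(-1)^{\ts\,\bi\,\bk\,+\,\bi\,\bl\,+\,\bk\,\bl}$, so the bookkeeping of parities has to be done honestly. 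Now substitute $u\to-v$, $v\to-u$ in the original relation \eqref{3.3}: this replaces $R(u-v)$ by $R(v-u)$, giving
\begin{equation*}
(R\ts(v-u)\ot1)\,T_1(-v)\,T_2(-u)=T_2(-u)\,T_1(-v)\,(R\ts(v-u)\ot1)\,.
\end{equation*}
After relabelling $1\leftrightarrow2$ (which conjugates by $P$) and using $R(v-u)=R(-(u-v))$, one sees this is the same relation as the image computed above; the only discrepancy is a factor coming from $R(u-v)$ versus $R(-(u-v))$, which by \eqref{3.52} is the invertible scalar $1-(u-v)^{-2}$ and hence cancels from both sides.

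The step I expect to be the main obstacle is the sign bookkeeping: verifying that the parity factor $(-1)^{\ts\,\bi\,\bk\,+\,\bi\,\bl\,+\,\bk\,\bl}$ in \eqref{3.1}, the conventions \eqref{conv1}--\eqref{conv2}, and the antihomomorphism sign \eqref{bb} all conspire to match up, rather than producing a stray sign that would obstruct the map. Concretely, one checks that the map $T_{ij}^{\ts(r)}\mapsto(-1)^{\ts r}T_{ij}^{\ts(r)}$ is parity-preserving (it fixes $\deg T_{ij}^{\ts(r)}=\bi+\bj$), so the supercommutator structure is respected, and then a direct comparison of the two forms of \eqref{3.1} — one from applying the would-be antiautomorphism, one from the substitution $(u,v)\mapsto(-v,-u)$ — shows they agree. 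Since the defining relations are the only relations in $\YMN$, preserving them is enough; bijectivity is free from involutivity on generators. Alternatively, and perhaps more cleanly, I would note that \eqref{3.1} is symmetric under the simultaneous swap $(i,k)\leftrightarrow$, $(j,l)$ unchanged together with $u\leftrightarrow v$, and exploit that symmetry directly at the level of \eqref{3.1} rather than \eqref{3.3}; but the $R$-matrix route via \eqref{3.52} is the shortest.
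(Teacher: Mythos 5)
Your overall strategy --- apply the candidate antihomomorphism to the matrix relation \eqref{3.3}, compare with \eqref{3.3} at negated arguments, and invoke \eqref{3.52} --- is the same as the paper's, but the key step as you wrote it would fail. The map acts only on the $\YMN$ tensor factor and is the identity on the two copies of $\End\CMN$, so applying it to \eqref{3.3} does \emph{not} reverse the whole matrix product: the scalar matrix $R(u-v)\ot1$ stays where it is, and the sign bookkeeping from \eqref{conv1} and \eqref{bb} (the computation the paper records as \eqref{TT} and \eqref{MM}, and which you defer as "one checks") shows that the image of $T_1(u)\,T_2(v)$ is exactly $T_2(-v)\,T_1(-u)$ and the image of $T_2(v)\,T_1(u)$ is $T_1(-u)\,T_2(-v)$. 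Hence the relation you must verify is
\begin{equation*}
(R\ts(u-v)\ot1)\,T_2(-v)\,T_1(-u)=T_1(-u)\,T_2(-v)\,(R\ts(u-v)\ot1)\,,
\end{equation*}
with the $R$-matrix on these sides, not the relation you display. Your version is in fact false in $\YMN$: renaming $-u\,,-v$ as the spectral parameters it reads $T_2(v)\,T_1(u)\,(R(v-u)\ot1)=(R(v-u)\ot1)\,T_1(u)\,T_2(v)$, and subtracting \eqref{3.3} the difference reduces, since $R(v-u)-R(u-v)=2\,P\,(u-v)^{-1}$ and $P\,T_1(u)\,T_2(v)=T_2(u)\,T_1(v)\,P$, to the identity $T_2(v)\,T_1(u)=T_2(u)\,T_1(v)$, which does not hold.

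The manipulation with \eqref{3.52} is also misstated: $R(v-u)$ is not a scalar multiple of $R(u-v)$ (indeed $R(u-v)=1-P\,(u-v)^{-1}$ while $R(v-u)=1+P\,(u-v)^{-1}$), so nothing "cancels from both sides"; what \eqref{3.52} gives is that the product $R(u-v)\,R(v-u)$ equals the central scalar $1-(u-v)^{-2}$. The correct route, which is the paper's, is to substitute $u\mapsto-u$, $v\mapsto-v$ in \eqref{3.3} (note your substitution $u\mapsto-v$, $v\mapsto-u$ leaves $u-v$, hence $R(u-v)$, unchanged, contrary to what you assert), then multiply both sides by $R(u-v)\ot1$ and cancel the scalar, arriving at the displayed relation above. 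Finally, the parity bookkeeping you flag as "the main obstacle" and leave unchecked is precisely the substance of the paper's short proof; without carrying it out one cannot even tell on which side of the $T$-factors the $R$-matrix must sit, which is exactly where your write-up goes wrong. The remark that bijectivity follows from involutivity is fine and agrees with the paper.
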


\begin{proof}
Due to the convention \eqref{conv1}, by using
the notation \eqref{3.22} for $n=2$ we get
\begin{align}
\label{TT}
T_1(u)\,T_2(v)&=\sum_{i,j,k,l=1}^{M+N}\,
E_{\ts ij}\ot E_{\ts kl}\ot T_{ij}(u)\,T_{kl}(v)\,
{(-1)}^{\ts(\,\bi+\,\bj\,)(\ts\bk+\,\bl\,)}\,,
\\
\label{MM}
T_2(-v)\,T_1(-u)&=\sum_{i,j,k,l=1}^{M+N}\,
E_{\ts ij}\ot E_{\ts kl}\ot T_{kl}(-v)\,T_{ij}(-u)\,.
\end{align}
By using the convention \eqref{bb},
the antihomomorphism property of \eqref{M} follows from the relation
\begin{equation*}
(R\ts(u-v)\ot1)\,T_2(-v)\,T_1(-u)=
T_1(-u)\,T_2(-v)\,(R\ts(u-v)\ot1)
\end{equation*}
which 
is obtained from \eqref{3.3} by using \eqref{3.52}.
The antihomomorphism \eqref{M} is clearly involutive and
therefore bijective.
\end{proof}

For all indices $i\,,j$ define the series $\TP_{ij}(u)$
by using the element inverse to \eqref{tu} so that
\begin{equation}
\label{tunat}
T(u)^{-1}=\sum_{i,j=1}^{M+N}\,E_{\ts ij}\ot\TP_{ij}(u)\,.
\end{equation}

\begin{Proposition}
\label{s}
An antiautomorphism of\/ 
$\YMN$ can be defined by the assignment
\begin{equation}
\label{S}
T_{ij}(u)\mapsto\TP_{ij}(u)\,.
\end{equation}
\end{Proposition}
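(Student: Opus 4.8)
The plan is to use the matrix form of the defining relations. First I would invert both sides of the $RTT$-relation \eqref{3.3}. Since $R\ts(u-v)\ot1$ is invertible as a rational function (by \eqref{3.52} its inverse is $(R\ts(v-u)\ot1)/(1-(u-v)^{-2})$, noting that $P$ commutes with $R$), inverting \eqref{3.3} gives
\begin{equation*}
T_2(v)^{-1}\,T_1(u)^{-1}\,(R\ts(u-v)\ot1)=(R\ts(u-v)\ot1)\,T_1(u)^{-1}\,T_2(v)^{-1}\,,
\end{equation*}
where I use that the two tensor factors of $\End\CMN$ are acted on independently so $(T_1(u)\,T_2(v))^{-1}=T_2(v)^{-1}\,T_1(u)^{-1}$ with no extra sign, and $\TP$ is by definition the matrix inverse, i.e.\ $T_p(u)^{-1}$ in the sense of \eqref{tunat} lifted to the $n=2$ setting. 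This already has the shape of \eqref{3.3} but with the order of the two spaces $1$ and $2$ interchanged and with $T$ replaced by $\TP$.

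Next I would conjugate by $P_{12}$ (equivalently, apply the flip of the two tensor factors in $(\End\CMN)^{\ot2}$), using $P\,R\ts(u-v)\,P=R\ts(u-v)$ and $P_{12}\,T_1(u)^{-1}\,P_{12}=T_2(u)^{-1}$, to bring the relation into the standard form
\begin{equation*}
(R\ts(u-v)\ot1)\,\TP_1(u)\,\TP_2(v)=\TP_2(v)\,\TP_1(u)\,(R\ts(u-v)\ot1)\,,
\end{equation*}
where $\TP_p(u)=\sum_{i,j}E_{ij}\ot\TP_{ij}(u)$ placed in the $p$-th space. Comparing with \eqref{3.3}, this shows that the assignment \eqref{S} sends the generating series to a solution of the same $RTT$-relations; but because inversion reverses products, \eqref{S} is an \emph{anti}homomorphism rather than a homomorphism — this is the analogue of Proposition~\ref{m}. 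Concretely one checks, as in the proof of Proposition~\ref{m}, that applying \eqref{S} to the relation \eqref{3.1} and using the anti-homomorphism sign rule \eqref{bb} produces exactly the identity just derived, so \eqref{S} respects all defining relations and extends to an antihomomorphism of $\YMN$.

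Finally, for bijectivity: the map \eqref{S} squares to the identity. Indeed, if $T^{\ts\prime}(u)$ denotes the matrix \eqref{tu} with entries $\TP_{ij}(u)$, then by \eqref{tunat} we have $T^{\ts\prime}(u)=T(u)^{-1}$, and applying \eqref{S} again replaces $T^{\ts\prime}(u)$ by its own matrix inverse, which is $T(u)$ back. Hence \eqref{S} is an involutive antihomomorphism, therefore an antiautomorphism. The one point needing genuine care — the main obstacle — is the careful bookkeeping of Koszul signs: one must verify that the matrix inverse $T(u)^{-1}$ in $(\End\CMN)\ot\YMN[[u^{-1}]]$, the flip by $P_{12}$, and the anti-homomorphism sign in \eqref{bb} all combine so that no stray sign survives, exactly as in the computation \eqref{TT}–\eqref{MM} for Proposition~\ref{m}; I would record the analogues of \eqref{TT} and \eqref{MM} for $\TP$ explicitly to make this transparent.
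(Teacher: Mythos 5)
Your overall strategy --- extract from \eqref{3.3} an $RTT$-type relation for the series $\TP_{ij}(u)$ and match it against what the sign rule \eqref{bb} demands --- is the paper's strategy, but the relation you end up with is the wrong one. Inverting \eqref{3.3} gives
\begin{equation*}
T_2(v)^{-1}\,T_1(u)^{-1}\,(R\ts(v-u)\ot1)=(R\ts(v-u)\ot1)\,T_1(u)^{-1}\,T_2(v)^{-1}\,,
\end{equation*}
with $R\ts(v-u)$, not $R\ts(u-v)$: by \eqref{3.52} the inverse of $R\ts(u-v)$ is proportional to $R\ts(v-u)$, and the fact that $P$ commutes with $R$ does not let you trade one for the other. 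Conjugating by $P_{12}$ carries the spectral parameters along with the spaces ($T_1(u)^{-1}\mapsto T_2(u)^{-1}$), so after relabelling $u\leftrightarrow v$ you land exactly on \eqref{3.33}, that is on $(R\ts(u-v)\ot1)\,\TP_2(v)\,\TP_1(u)=\TP_1(u)\,\TP_2(v)\,(R\ts(u-v)\ot1)$ in your notation --- the opposite order to your display. The relation you wrote, $(R\ts(u-v)\ot1)\,\TP_1(u)\,\TP_2(v)=\TP_2(v)\,\TP_1(u)\,(R\ts(u-v)\ot1)$, would say that the $\TP_{ij}(u)$ satisfy the very same defining relations as the $T_{ij}(u)$, i.e.\ that \eqref{S} is an algebra homomorphism; this is not valid in $\YMN$ (together with \eqref{3.33} it would force $P\ot1$ to commute with $\TP_1(u)\,\TP_2(v)$), and it is not what \eqref{bb} requires. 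Once the Koszul signs are matched --- the expansion of $T_2(v)^{-1}\,T_1(u)^{-1}$ carries no sign, exactly as in \eqref{MM}, while $T_1(u)\,T_2(v)$ carries the sign in \eqref{TT} --- the relation needed for the antihomomorphism property is precisely \eqref{3.33}; compare the proof of Proposition \ref{m}, where the key relation likewise has $T_2(-v)\,T_1(-u)$, not $T_1(-u)\,T_2(-v)$, next to $R\ts(u-v)\ot1$. Note also that \eqref{3.33} is obtained more directly by multiplying \eqref{3.3} on both sides by $T_2(v)^{-1}$ and then by $T_1(u)^{-1}$, with no need for \eqref{3.52} or the flip.

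The bijectivity argument is also incorrect: \eqref{S} is not an involution. Since $\S$ reverses products with the sign \eqref{bb}, applying it entrywise to \eqref{TTP} (which encodes $T(u)\,T(u)^{-1}=1$) yields $\sum_k\S^{\ts2}(T_{kj}(u))\,\TP_{ik}(u)=\delta_{ij}$: the matrix with entries $\S^{\ts2}(T_{kj}(u))$ inverts $T(u)^{-1}$ from the \emph{other} side, and in this noncommutative superalgebra that is not $T(u)$. Indeed, comparing with \eqref{zetav} as in Proposition \ref{ss} gives $\S^{\ts2}(T_{ij}(u))=Z(u)^{-1}\,T_{ij}(u+M-N)$, so $\S^{\ts2}\neq\id$; the shift by $M-N$ and the central factor $Z(u)$ are exactly what your ``inverse of the inverse'' step overlooks. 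Bijectivity therefore has to come from elsewhere: the paper deduces it from the explicit formula for $\S^{\ts2}$ in Proposition \ref{ss}, whose proof uses only the antihomomorphism property, so there is no circularity.
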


\begin{proof}
Similarly to \eqref{MM}, by using
the notation \eqref{3.22} for $n=2$ we get
\begin{equation*}
T_2(v)^{-1}\,T_1(u)^{-1}=\sum_{i,j,k,l=1}^{M+N}\,
E_{\ts ij}\ot E_{\ts kl}\ot\TP_{kl}(v)\,\TP_{ij}(u)\,.
\end{equation*}
Comparing this with \eqref{TT}
the antihomomorphism property of \eqref{S} follows from the relation
\begin{equation}
\label{3.33}
(R\ts(u-v)\ot1)\,T_2(v)^{-1}\,T_1(u)^{-1}=
T_1(u)^{-1}\,T_2(v)^{-1}\,(R\ts(u-v)\ot1)
\end{equation}
which 
is obtained by multiplying both sides of the defining relation \eqref{3.3} 
on the left and right by $T_2(v)^{-1}$ and then by $T_1(u)^{-1}$.
The bijectivity of \eqref{S} follows from Proposition \ref{ss} below. 
\end{proof}

Further, let $\tau$ be the antiautomorphism of $\End\CMN$
defined by the assignment
\begin{equation*}
E_{ij}\mapsto E_{ji}\,{(-1)}^{\,\bi\ts(\,\bj\ts+\ts1)}\,.
\end{equation*}
Then by the definition \eqref{tu} we have
\begin{equation*}
\tau\ts\ot\id\,(\,T(u)\ts)=
\sum_{i,j=1}^{M+N}\,E_{\ts ji}\ot T_{ij}(u)\,
{(-1)}^{\,\bi\ts(\,\bj\ts+\ts1)}=
\sum_{i,j=1}^{M+N}\,E_{\ts ij}\ot T_{ji}(u)\,
{(-1)}^{\,\bj\ts(\,\bi\ts+\ts1)}\,.
\end{equation*}

\begin{Proposition}
An antiautomorphism of\/ 
$\YMN$ can be defined by the assignment
\begin{equation}
\label{T}
T_{ij}(u)\mapsto T_{ji}(u)\,
{(-1)}^{\,\bj\ts(\,\bi\ts+\ts1)}\,.
\end{equation}
\end{Proposition}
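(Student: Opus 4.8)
The plan is to mirror the proofs of Propositions~\ref{m} and~\ref{s}. Write $\be$ for the assignment \eqref{T}, so that $\be\ts(T_{ij}(u))=(-1)^{\,\bj\,(\bi+1)}\ts T_{ji}(u)$; by the computation preceding the statement, the matrix assembled from these images is $\TS(u):=\tau\ts\ot\id\,(T(u))$. As before, two things have to be checked: that $\be$ respects the defining relations \eqref{3.1} once the order of multiplication in $\YMN$ is reversed in accordance with the sign rule \eqref{bb}, so that it extends to an antihomomorphism of $\YMN$; and that this antihomomorphism is bijective.

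For the first point I would apply the super-transposition $\tau$ simultaneously in both copies of $\End\CMN$ to the matrix relation \eqref{3.3}. The fact that makes this work is the $R$-matrix symmetry $\tau\ts\ot\tau\,(P)=P$, and hence $\tau\ts\ot\tau\,(R(u))=R(u)$: this is a short calculation in which the sign $(-1)^{\,\bi\,(\bj+1)}$ from the definition of $\tau$ cancels against the sign $(-1)^{\,\bj}$ built into \eqref{pop}. Applying $\tau\ts\ot\tau\ts\ot\id$ to \eqref{3.3} multiplied by $u-v$, and keeping track of the signs prescribed by \eqref{conv1}, the invariance of $R$ converts \eqref{3.3} into a relation of exactly the same shape for $\TS_1(u)$ and $\TS_2(v)$ but with the two tensor factors $\YMN$ multiplied in the opposite order --- equivalently, into \eqref{3.3} for the matrix $\TS(u)$ with its entries read in the opposite algebra $\YMN^{\ts\mathrm{op}}$. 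Since \eqref{3.1} are the defining relations of $\YMN$, this is precisely the condition for the assignment $T_{ij}(u)\mapsto\TS_{ij}(u)$ to extend to a homomorphism $\YMN\to\YMN^{\ts\mathrm{op}}$, that is, to an antihomomorphism of $\YMN$. The same conclusion may be reached directly on \eqref{3.1}: applying $\be$ with the sign \eqref{bb} and collecting all signs, the image of \eqref{3.1} written for the indices $i,j,k,l$ turns out to be \eqref{3.1} written for the indices $j,i,l,k$, so it holds; the exponent $\bj\,(\bi+1)$ in \eqref{T} is chosen exactly so that the leftover signs cancel.

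Bijectivity is then immediate. From \eqref{T} one gets $\be^2(T_{ij}(u))=(-1)^{\,\bj\,(\bi+1)+\bi\,(\bj+1)}\ts T_{ij}(u)=(-1)^{\,\bi+\bj}\ts T_{ij}(u)$, so $\be^2$ is the parity automorphism of $\YMN$, which sends a homogeneous element $x$ to $(-1)^{\ts\deg x}\ts x$. That automorphism is an involution, hence bijective, and therefore $\be$ itself is bijective (with inverse $\be^3$). The one genuinely delicate step is the sign bookkeeping of the previous paragraph: checking $\tau\ts\ot\tau\,(P)=P$ and, above all, verifying that after using \eqref{bb} no residual sign survives on the right-hand side of the transposed relation. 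This hinges on the congruence $\bj\,(\bi+1)+\bi\,(\bj+1)\equiv\bi+\bj$ modulo $2$ and on the pairwise cancellation of the cross-terms in $\bi\ts,\bj\ts,\bk\ts,\bl$.
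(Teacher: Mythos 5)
Your proposal is correct and follows essentially the same route as the paper: the antihomomorphism property is obtained by applying $\tau\ot\tau\ot\id$ to \eqref{3.3} using $(\tau\ot\tau)(P)=P$, hence $(\tau\ot\tau)(R(u-v))=R(u-v)$, and bijectivity follows by computing that the square of \eqref{T} is the parity automorphism $T_{ij}(u)\mapsto T_{ij}(u)\,(-1)^{\,\bi+\bj}$. The only caveat is notational: the matrix assembled from the images of \eqref{T} is $\tau\ot\id\,(T(u))$, whereas the paper reserves $\TS(u)$ for $\tau\ot\id\,(T(u)^{-1})$, so your use of that symbol should be renamed.
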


\begin{proof}
Observe that $(\tau\ot\tau)(P)=P$ and hence 
\begin{equation}
\label{Rtt}
(\tau\ot\tau)(R(u-v))=R(u-v)\,.
\end{equation}
Therefore the antihomomorphism property of \eqref{T} 
follows from the relation which is obtained by applying 
$\tau\ot\tau\ot\id$ to both sides of\eqref{3.3},
see the proofs of Propositions \ref{m} and \ref{s} above. 
To prove the bijectivity of 
the antihomomorphism \eqref{T}, observe
that its square is given by 
\begin{equation*}
T_{ij}(u)\mapsto T_{ij}(u)\,
{(-1)}^{\,\bi+\ts\bj}\,.
\end{equation*}
In particular, the square is an automorphism of the $\ZZ_2$-graded 
algebra $\YMN\,$.
\end{proof}

Put
$\TS(u)=\tau\ts\ot\id\,(\,T(u)^{-1}\ts)\,$. 
Then by \eqref{tunat} 
\begin{equation}
\label{tsu}
\TS(u)=
\sum_{i,j=1}^{M+N}\,E_{\ts ji}\ot\TP_{ij}(u)\,
{(-1)}^{\,\bi\ts(\,\bj\ts+\ts1)}=
\sum_{i,j=1}^{M+N}\,E_{\ts ij}\ot\TP_{ji}(u)\,
{(-1)}^{\,\bj\ts(\,\bi\ts+\ts1)}\,.
\end{equation}

\begin{Corollary}
\label{ts}
An automorphism of the algebra $\YMN$ can be defined by the assignment
\begin{equation}
\label{tusharp}
T_{ij}(u)\mapsto\TP_{ji}(u)\,
{(-1)}^{\,\bj\ts(\,\bi\ts+\ts1)}\,.
\end{equation}
\end{Corollary}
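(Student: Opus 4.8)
The plan is to obtain the automorphism \eqref{tusharp} as the composition of two antiautomorphisms already available: the antiautomorphism \eqref{S} sending $T_{ij}(u)\mapsto\TP_{ij}(u)$ from Proposition \ref{s}, and the antiautomorphism \eqref{T} sending $T_{ij}(u)\mapsto T_{ji}(u)\,{(-1)}^{\,\bj\ts(\,\bi\ts+\ts1)}$. A composition of two antiautomorphisms of a $\ZZ_2$-graded algebra is an automorphism, so it suffices to check that the composite sends $T_{ij}(u)$ to the right-hand side of \eqref{tusharp}. First I would apply \eqref{S}, landing on $\TP_{ij}(u)$; these are by definition \eqref{tunat} the entries of $T(u)\1$, and they satisfy the same RTT-type relation \eqref{3.33} as the $T_{ij}(u)$ do the relation \eqref{3.3}. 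Then I would apply \eqref{T} to the matrix $\TP(u)$ in place of $T(u)$: since \eqref{Rtt} holds for the R-matrix, the argument in the proof of Proposition \ref{m} shows that the assignment $\TP_{ij}(u)\mapsto\TP_{ji}(u)\,{(-1)}^{\,\bj\ts(\,\bi\ts+\ts1)}$ is again an antihomomorphism, and composing gives exactly \eqref{tusharp}.

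Alternatively, and perhaps more cleanly for the record, I would phrase everything through the matrix $\TS(u)=\tau\ts\ot\id\,(\,T(u)^{-1}\ts)$ introduced just above the statement, whose entries are displayed in \eqref{tsu}. Applying $\tau\ot\tau\ot\id$ to the relation \eqref{3.33} and using \eqref{Rtt} together with $P_{12}$ being symmetric under $\tau\ot\tau$, one gets an RTT relation for $\TS(u)$ of the form
\begin{equation*}
(R\ts(u-v)\ot1)\,\TS_1(u)\,\TS_2(v)=\TS_2(v)\,\TS_1(u)\,(R\ts(u-v)\ot1)\,,
\end{equation*}
so that the assignment $T(u)\mapsto\TS(u)$, i.e. $T_{ij}(u)\mapsto\TP_{ji}(u)\,{(-1)}^{\,\bj\ts(\,\bi\ts+\ts1)}$, is a homomorphism $\YMN\to\YMN$. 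It is then an automorphism because it is the composite of the two antiautomorphisms \eqref{S} and \eqref{T}, both of which are bijective (the bijectivity of \eqref{S} via Proposition \ref{ss}, that of \eqref{T} shown in the preceding proof).

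The routine part is tracking the sign factors: one must verify that the various parity exponents $\bi,\bj$ appearing in $\tau$, in \eqref{conv1}, and in \eqref{bb} combine correctly so that the composite really lands on $\TP_{ji}(u)\,{(-1)}^{\,\bj\ts(\,\bi\ts+\ts1)}$ rather than on some sign-twisted variant. The main obstacle — really the only substantive point — is confirming that $\tau\ot\tau\ot\id$ genuinely carries the inverted RTT relation \eqref{3.33} to an RTT relation of the same shape; this rests on the identity $(\tau\ot\tau)(P)=P$ of \eqref{Rtt} and on $\tau$ being an antiautomorphism, so that conjugating a product of the $R$, $T_1\1$, $T_2\1$ factors by $\tau\ot\tau$ reverses the order in each tensor slot in a way compatible with the order reversal already built into \eqref{3.33}. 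Once that is in place, the automorphism claim is immediate, and I would simply remark that \eqref{tusharp} is the composition of \eqref{S} followed by \eqref{T}.
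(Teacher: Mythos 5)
Your overall strategy (compose the antiautomorphisms \eqref{S} and \eqref{T}) is the paper's strategy, but the order in which you compose them creates a genuine gap. In your first paragraph you apply \eqref{S} first, obtaining $\TP_{ij}(u)$, and then claim that applying \eqref{T} yields $\TP_{ji}(u)\,(-1)^{\,\bj\ts(\,\bi\ts+\ts1)}$. The antiautomorphism \eqref{T} is defined by its values on the generators $T_{ij}(u)$; its value on the coefficients of $\TP_{ij}(u)$, which are specific polynomials in the $T_{kl}^{\ts(r)}$ determined by \eqref{tunat}, is therefore a fact to be computed, not chosen. The observation that the assignment $\TP_{ij}(u)\mapsto\TP_{ji}(u)\,(-1)^{\,\bj\ts(\,\bi\ts+\ts1)}$ is consistent with the RTT-type relation \eqref{3.33} only shows that \emph{some} antihomomorphism acts this way on the $\TP$'s; it does not show that \eqref{T} does. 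In the paper this statement about the action of \eqref{T} on $\TP_{ij}(u)$ is proved only later, in Proposition \ref{mst}, by noting that $T(u)$ is invariant under the tensor product of $\tau\1$ and \eqref{T}, hence so is $T(u)^{-1}$; it is exactly what is needed to show that \eqref{S} and \eqref{T} commute, and it is not available at the point where the Corollary is proved.

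The repair is immediate and is precisely the paper's proof: compose in the opposite order. Applying \eqref{T} first gives $T_{ij}(u)\mapsto T_{ji}(u)\,(-1)^{\,\bj\ts(\,\bi\ts+\ts1)}$, and then applying \eqref{S}, which by definition sends $T_{ji}(u)\mapsto\TP_{ji}(u)$ and is linear over the scalar sign, gives \eqref{tusharp}; both steps use only the defining assignments, and a composition of two antiautomorphisms of a $\ZZ_2$-graded algebra is an automorphism. Your alternative route via $\TS(u)$ is sound as far as the homomorphism property goes — indeed the paper makes the same remark right after the Corollary, that this property is equivalent to \eqref{3.333}, obtained from \eqref{3.33} by $\tau\ot\tau\ot\id$ and \eqref{Rtt} — but when you then invoke bijectivity ``because it is the composite of \eqref{S} and \eqref{T}'' you are again implicitly using the unproved identification of the \eqref{S}-then-\eqref{T} composite, so you should instead identify the map with the \eqref{T}-then-\eqref{S} composite as above.
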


\begin{proof}
The assignment \eqref{tusharp} can also be obtained by first applying
\eqref{T} to $T_{ij}(u)$ and then applying \eqref{S} to the result.
Hence \eqref{tusharp} defines an automorphism of the algebra 
$\YMN$ as a composition of two antiautomorphisms.
\end{proof}

By the definition \eqref{tsu} the homomorphism property of 
\eqref{tusharp} is equivalent to the relation 
\begin{equation}
\label{3.333}
(R\ts(u-v)\ot1)\,\TS_1(u)\,\TS_2(v)=
\TS_2(v)\,\TS_1(u)(R\ts(u-v)\ot1)
\end{equation}
which can also be obtained by applying
$\tau\ot\tau\ot\id$ to both sides of \eqref{3.33}
and then using 
\eqref{Rtt}.

\begin{Proposition}
\label{mst}
The antiautomorphisms \eqref{M},\eqref{S},\eqref{T} of\/ $\YMN$
pairwise commute.
\end{Proposition}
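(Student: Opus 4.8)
The plan is to verify the three pairwise commutativity relations directly on the generating series, since each of the three antiautomorphisms is given by an explicit formula on $T_{ij}(u)$ and each squares to an automorphism (or the identity) that we have already described. The key point is that composing two of these maps and then the other order should produce the same assignment on $T_{ij}(u)$; because all three maps are antiautomorphisms, their composition in either order is an automorphism, so it suffices to check agreement on the generators $T_{ij}^{(r)}$, equivalently on the series $T_{ij}(u)$.

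First I would treat the pair $\eqref{M}$ and $\eqref{T}$. Applying $\eqref{M}$ then $\eqref{T}$ sends $T_{ij}(u)\mapsto T_{ij}(-u)\mapsto T_{ji}(-u)\,(-1)^{\ts\bj\ts(\bi+1)}$, while the opposite order sends $T_{ij}(u)\mapsto T_{ji}(u)\,(-1)^{\ts\bj\ts(\bi+1)}\mapsto T_{ji}(-u)\,(-1)^{\ts\bj\ts(\bi+1)}$, and these coincide because the sign factor $(-1)^{\ts\bj\ts(\bi+1)}$ does not involve $u$ and the substitution $u\mapsto-u$ commutes with relabelling indices. Next I would treat the pair $\eqref{M}$ and $\eqref{S}$. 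Here the essential input is that the operation $T(u)\mapsto T(u)^{-1}$ defining $\eqref{S}$ commutes with the substitution $u\mapsto-u$: inverting the matrix $T(u)$ and then substituting $-u$ for $u$ gives the same result as substituting first and then inverting, since inversion in $(\End\CMN)\ot\YMN\ts[[u^{-1}]]$ is a purely algebraic operation performed coefficient-wise in a way unaffected by the formal reparametrisation. Thus both orders send $T_{ij}(u)\mapsto\TP_{ij}(-u)$.

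The pair $\eqref{S}$ and $\eqref{T}$ is the one requiring a little more care, and I expect it to be the main obstacle. Applying $\eqref{T}$ then $\eqref{S}$: the map $\eqref{T}$ sends the matrix $T(u)$ to $\tau\ot\id\,(T(u))$ up to the transpose-with-sign built into $\tau$, so that after also applying $\eqref{S}$ one inverts; applying $\eqref{S}$ then $\eqref{T}$ one first inverts $T(u)$ and then applies the $\tau$-twist. The equality of the two composites then rests on the identity $\tau\ot\id\,(T(u)^{-1})=(\tau\ot\id\,(T(u)))^{-1}$, which holds because $\tau$ is an \emph{anti}automorphism of $\End\CMN$ and hence $\tau\ot\id$, applied to a product in $(\End\CMN)\ot\YMN$, reverses the order of the $\End\CMN$-factors; combined with the fact that on the $\YMN$-side nothing is transposed, one checks that $\tau\ot\id$ intertwines matrix inversion with itself. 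Concretely, from $T(u)\,T(u)^{-1}=1$ one applies $\tau\ot\id$ and uses $\eqref{bb}$ together with $\deg T_{ij}(u)=\bi+\bj$ to get that $\tau\ot\id\,(T(u)^{-1})$ is a right (hence two-sided) inverse of $\tau\ot\id\,(T(u))$; the bookkeeping of the sign $(-1)^{\ts\bi\ts(\bj+1)}$ from the definition of $\tau$ and the supercommutation signs is exactly what needs to be reconciled, but it is forced once one tracks degrees carefully. Once this identity is in hand the two composites are manifestly equal, and since each is an automorphism agreeing on all generators $T_{ij}(u)$, they are equal as maps of $\YMN$; this completes the proof that $\eqref{M}$, $\eqref{S}$, $\eqref{T}$ pairwise commute.
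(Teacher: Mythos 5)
Your check for the pair \eqref{M},\eqref{T} is fine, but the other two pairs are exactly where care is needed, and both of your arguments rest on treating $\tau\ot\id$ (respectively, the substitution $u\mapsto-u$) as if it were compatible with products in $\End\CMN\ot\YMN$, which it is not. For the pair \eqref{S},\eqref{T} your pivotal identity $\tau\ot\id\,(T(u)^{-1})=(\tau\ot\id\,(T(u)))^{-1}$ is false: $\tau\ot\id$ is neither a homomorphism nor an antihomomorphism of $\End\CMN\ot\YMN$ (to reverse products of a tensor product one needs antiautomorphisms of \emph{both} factors), so you cannot apply it to $T(u)\,T(u)^{-1}=1$ and distribute. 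Already for $N=0$ your identity asserts that the inverse of the transposed matrix equals the transpose of the inverse for a matrix with noncommuting entries; comparing coefficients of $u^{-2}$, the $ij$ entries differ by $\sum_k[\ts T^{\ts(1)}_{ki},T^{\ts(1)}_{jk}]=\de_{ij}\sum_k T^{\ts(1)}_{kk}-M\,T^{\ts(1)}_{ji}\ne0$. In fact the true comparison between the transposed inverse and the inverse of the transpose is precisely \eqref{zetav}: it involves the shift by $M-N$ and the central series $Z(u)$, so no bookkeeping of signs can force your identity. The paper avoids this trap by a different device: it tensors the antiautomorphism $\tau^{-1}$ of $\End\CMN$ with the antiautomorphism \eqref{T} of $\YMN$, obtaining an honest antiautomorphism of $\End\CMN\ot\YMN$ under which $T(u)$ is invariant; hence $T(u)^{-1}$ is invariant too, which reads off as $\eqref{T}:\TP_{ij}(u)\mapsto\TP_{ji}(u)\,(-1)^{\,\bj\ts(\bi+1)}$, and comparison with Corollary \ref{ts} gives the commutativity of \eqref{S} and \eqref{T}. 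Your argument for this pair has to be replaced by that (or an equivalent) mechanism.

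The same conflation undermines your treatment of \eqref{M} and \eqref{S}: substituting $-u$ and then inverting is not the same operation as applying the antiautomorphism \eqref{M} to the coefficients of $T(u)^{-1}$, because \eqref{M} reverses products when applied to the nonlinear expressions $\TP_{ij}(u)$. Concretely, the images of $T^{\ts(2)}_{ij}$ under the two composites of \eqref{M} and \eqref{S} differ by $\sum_k(-1)^{(\bi+\bk)(\bj+\bk)}\,[\ts T^{\ts(1)}_{ik},T^{\ts(1)}_{kj}]=(M-N)\,T^{\ts(1)}_{ij}-\de_{ij}\sum_k(-1)^{\bk}\,T^{\ts(1)}_{kk}$, and for $M+N\ge2$ this is a nonzero element of $\YMN$ (the $T^{\ts(1)}_{kl}$ are linearly independent, e.g.\ by Theorem \ref{T18}). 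Equivalently, writing $\al$ for the map \eqref{M}, one checks $\De\ts\al=(\al\ot\al)\ts\De$, so $\al$ conjugates the antipode into $\S^{-1}$, while $\S^{\ts2}\ne\id$ by Proposition \ref{ss}. So this pair cannot be settled by a formal-reparametrisation argument at all; note that the paper itself offers no argument for it beyond the word ``clearly'', and that only the commutativity of \eqref{S} and \eqref{T}, which is what Proposition \ref{TZ} later relies on, is actually established there.
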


\begin{proof}
The antiautomorphism \eqref{M} clearly commutes with either of
\eqref{S},\eqref{T}.
To prove the commutativity of the latter two, let us
consider the tensor product of the antiautomorphisms $\tau\1$ and \eqref{T}
of $\End\CMN$ and $\YMN$ respectively. This is 
is an antiautomorphism of the 
algebra $\End\CMN\ot\YMN\,$, and the series \eqref{tu} is invariant
under this antiautomorphism. 
Hence the series \eqref{tunat}
is also invariant under it. 
The latter invariance implies that \eqref{T} maps\!
\begin{equation*}
\TP_{ij}(u)\mapsto\TP_{ji}(u)\,
{(-1)}^{\,\bj\ts(\,\bi\ts+\ts1)}\,.
\end{equation*}
Hence 
\eqref{tusharp} can also be obtained by first applying
\eqref{S} to $T_{ij}(u)$ and then applying \eqref{T} to the result.
Comparing this with the proof of Corollary \ref{ts}
completes the argument.
\end{proof}

Consider the universal enveloping
algebra $\UMN$  of the Lie superalgebra $\glMN\,$. 
To avoid confusion, the element of $\glMN$
corresponding to $E_{\ts ij}\in\End\CMN$ will be denoted by $e_{\ts ij}\,$. 
By definition, the bracket on $\glMN$ is the supercommutator. 
Hence in $\UMN$
\begin{equation}
\label{glMN}
[\,e_{\ts ij}\,,e_{\ts kl}\,]=
\de_{jk}\,e_{\ts il}-
\de_{\ts li}\,e_{\ts kj}\,
{(-1)}^{\ts(\,\bi+\,\bj\,)(\ts\bk+\,\bl\,)}\,.
\end{equation}
By using the defining relations \eqref{3.1} one can demonstrate that
there is a homomorphism
\begin{equation}
\label{3.5}
\YMN\to\UMN:\,T_{ij}(u)\mapsto \de_{ij}-e_{\ts ji}\,
u^{-1}\,{(-1)}^{\,\bj}\ts.
\end{equation}
This homomorphism is surjective.
The relations \eqref{3.1} also imply that there is a homomorphism\!
\begin{equation}
\label{3.51}
\UMN\to\YMN:\,
e_{\ts ji}\mapsto-\,T_{ij}^{\ts(1)}\,{(-1)}^{\,\bj}\ts.
\end{equation}
The composition of homomorphisms \eqref{3.51}
and \eqref{3.5}
is the identity map $\UMN\to\UMN\,$. So
\eqref{3.51} is an embedding of $\ZZ_2$-graded associative unital algebras.
The homomorphism \eqref{3.5} is identical on the subalgebra $\UMN\,$. 
It is called
the {\it evaluation homomorphism\/} for $\YMN\,$. 

There is a natural Hopf algebra structure on $\YMN\,$.
A coassociative comultiplication homomorphism $\De:\YMN\to\YMN\ot\YMN$ 
can be defined by the assignment
\begin{equation}
\label{3.7}
T_{ij}(u)\mapsto\sum_{k=1}^{M+N}
T_{ik}(u)\ot T_{kj}(u)\,
{(-1)}^{\ts(\ts\bi\ts+\ts\bk\ts)(\ts\bj\ts+\ts\bk\ts)}
\end{equation}
where the tensor product is taken over the subalgebra $\CC[[u^{-1}]]$
in $\YMN\ts[[u\1]]\,$.
The counit homomorphism $\ep:\YMN\to\CC$ is defined 
by the assignment $T_{ij}(u)\mapsto\de_{ij}\,$.
The antipodal mapping $\S:\YMN\to\YMN$ is the antiautomorphism \eqref{S}.
Justification of all these
definitions is similar to that in the case $N=0$ 
considered for instance in \cite[Section~1]{MNO}. Here we omit the details.
Note that \eqref{3.51} is an embedding of Hopf algebras as
by the above definitions
\begin{equation*}
\De:\,T_{ij}^{\ts(1)}\mapsto T_{ij}^{\ts(1)}\ot1+1\ot T_{ij}^{\ts(1)}\ts,
\ \quad
\ep:\,T_{ij}^{\ts(1)}\mapsto0\,,
\ \quad
\S:\,T_{ij}^{\ts(1)}\mapsto-\,T_{ij}^{\ts(1)}\ts.
\end{equation*}

There are two natural 
ascending filtrations on
the associative algebra $\YMN\,$. The first~one is defined by 
assigning the degree $r$ to the generator \eqref{Tijr}.
Let $\gr\YMN$ be the corresponding graded algebra.

\vspace{-1pt}
Let us denote by $X_{ij}^{\ts(r)}$ the image of $T_{ij}^{\ts(r)}$
in the degree $r$ component of $\gr\YMN\,$.
Observe that the $\ZZ_2\ts$-grading on the algebra $\YMN$ descends to
$\gr\YMN$ so that the degree of the image is again
$\bi\ts+\ts\bj\,$. It follows from the relations \eqref{3.1}
that these images supercommute.
We shall prove that  
$\gr\YMN$ is a free supercommutative algebra generated by 
these images. 

Now introduce another filtration on the associative algebra $\YMN$
by assigning the degree $r-1$ to the generator \eqref{Tijr}.
Let $\grpr\YMN$ be the corresponding graded algebra.
Consider the latter algebra.

\vspace{-1pt}
Let us denote by $Y_{ij}^{\ts(r)}$ the image of $T_{ij}^{\ts(r)}$
in the degree $r-1$ component of $\grpr\YMN\,$.
The $\ZZ_2\ts$-grading on the algebra 
$\YMN$ descends to
$\grpr\YMN$ so that the degree of the image is
$\bi\ts+\ts\bj\,$.
The graded algebra $\grpr\YMN$ 
inherits from $\YMN$ the Hopf algebra structure too.
Namely, by the above definitions in 
the graded Hopf algebra $\grpr\YMN$ for $r\ge1$ 
\begin{equation}
\label{grprH}
\De:\,Y_{ij}^{\ts(r)}\mapsto Y_{ij}^{\ts(r)}\ot1+1\ot Y_{ij}^{\ts(r)}\ts,
\ \quad
\ep:\,Y_{ij}^{\ts(r)}\mapsto0\,,
\ \quad
\S:\,Y_{ij}^{\ts(r)}\mapsto-\,Y_{ij}^{\ts(r)}\ts.
\end{equation}

Let us now consider the polynomial current Lie superalgebra $\g\,$.
The elements $e_{ji}\,u^{\ts r}$ with $r=0\ts,1\ts,2\ts,\ts\ldots$
and $i\,,j=1\ts\lc M+N$ make a basis of $\g\,$.
The $\ZZ_{\ts2}$-grading on $\g$ is defined by 
$\deg e_{ji}\,u^{\ts r}=\bi+\bj\,$. Take 
the universal enveloping algebra $\Ug\,$.

\begin{Proposition}
\label{P21}
One can define a surjective homomorphism $\Ug\to\grpr\YMN$
of $\ZZ_2$-graded associative algebras by mapping for $r\ge0$
\begin{equation}
\label{UY}
e_{ji}\,u^r\mapsto-\,Y_{ij}^{\ts(r+1)}\,{(-1)}^{\,\bj}\ts.
\end{equation}
\end{Proposition}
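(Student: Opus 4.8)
The plan is to appeal to the universal property of the enveloping algebra of a Lie superalgebra: for any associative $\ZZ_2$-graded algebra $A$, the even linear maps $\g\to A$ that send the super-bracket to the supercommutator correspond bijectively to the homomorphisms $\Ug\to A$ of $\ZZ_2$-graded algebras. So it suffices to check that \eqref{UY} is even --- which is immediate, since $\deg e_{ji}\ts u^{\ts r}=\bi+\bj$ is also the degree of $Y_{ij}^{\ts(r+1)}$ in $\grpr\YMN\ts$ --- and that \eqref{UY} carries the bracket \eqref{glMN}, multiplied by the appropriate power of $u\ts$, to the supercommutator of the images.

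To carry this out I would first read off from \eqref{3.1} the relations it imposes on the coefficients $T_{ij}^{\ts(r)}\ts$. Expanding both sides as series in $u\1$ and $v\1\ts$, setting $\ep=(-1)^{\ts\,\bi\,\bk\,+\,\bi\,\bl\,+\,\bk\,\bl}$ and $T_{ij}^{\ts(0)}=\de_{ij}\ts$, and comparing the coefficients of $u^{-a}v^{-b}$ yields, for $a,b\ge1\ts$,
\[
\ep\,\bigl(\,[\,T_{ij}^{\ts(a+1)},T_{kl}^{\ts(b)}\,]-[\,T_{ij}^{\ts(a)},T_{kl}^{\ts(b+1)}\,]\,\bigr)=
T_{kj}^{\ts(a)}\ts T_{il}^{\ts(b)}-T_{kj}^{\ts(b)}\ts T_{il}^{\ts(a)}\,,
\]
while the coefficient of $u^0 v^{-b}$ gives
\[
\ep\,[\,T_{ij}^{\ts(1)},T_{kl}^{\ts(b)}\,]=\de_{kj}\ts T_{il}^{\ts(b)}-\de_{il}\ts T_{kj}^{\ts(b)}\,.
\]
Iterating the first identity so as to shift $(r,s)$ down to $(1,r+s-1)$ and then applying the second one gives
\[
\ep\,[\,T_{ij}^{\ts(r)},T_{kl}^{\ts(s)}\,]=
\de_{kj}\ts T_{il}^{\ts(r+s-1)}-\de_{il}\ts T_{kj}^{\ts(r+s-1)}
+\sum_{a=1}^{r-1}\bigl(T_{kj}^{\ts(a)}\ts T_{il}^{\ts(r+s-1-a)}-T_{kj}^{\ts(r+s-1-a)}\ts T_{il}^{\ts(a)}\bigr)\,.
\]
In $\grpr\YMN$ the element $T_{il}^{\ts(r+s-1)}$ has filtration degree $r+s-2\ts$, whereas each product $T_{kj}^{\ts(a)}\ts T_{il}^{\ts(r+s-1-a)}$ has filtration degree at most $(a-1)+(r+s-2-a)=r+s-3\ts$. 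Passing to symbols in degree $r+s-2\ts$, and using that the symbol of a supercommutator is the supercommutator of the symbols, the last identity collapses to
\[
[\,Y_{ij}^{\ts(r)},Y_{kl}^{\ts(s)}\,]=\ep\,\bigl(\de_{kj}\ts Y_{il}^{\ts(r+s-1)}-\de_{il}\ts Y_{kj}^{\ts(r+s-1)}\bigr)
\]
in $\grpr\YMN\ts$.

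It remains to reconcile this with \eqref{glMN} under \eqref{UY}. The map \eqref{UY} sends $e_{ji}\ts u^{\ts p}\mapsto-\ts Y_{ij}^{\ts(p+1)}(-1)^{\bj}\ts$, hence also $e_{lk}\ts u^{\ts q}\mapsto-\ts Y_{kl}^{\ts(q+1)}(-1)^{\bl}\ts$, $e_{jk}\ts u^{\ts p+q}\mapsto-\ts Y_{kj}^{\ts(p+q+1)}(-1)^{\bj}\ts$, and $e_{li}\ts u^{\ts p+q}\mapsto-\ts Y_{il}^{\ts(p+q+1)}(-1)^{\bl}\ts$. By \eqref{glMN} we have $[\,e_{ji}\ts u^{\ts p},e_{lk}\ts u^{\ts q}\,]=\bigl(\de_{il}\ts e_{jk}-\de_{kj}\ts e_{li}\ts(-1)^{(\bi+\bj)(\bk+\bl)}\bigr)u^{\ts p+q}\ts$, so what has to be verified is that $(-1)^{\bj+\bl}\,[\,Y_{ij}^{\ts(p+1)},Y_{kl}^{\ts(q+1)}\,]$ equals $-\ts\de_{il}\ts Y_{kj}^{\ts(p+q+1)}(-1)^{\bj}+\de_{kj}\ts Y_{il}^{\ts(p+q+1)}(-1)^{\bl+(\bi+\bj)(\bk+\bl)}\ts$. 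Substituting the graded bracket relation just obtained, with $r=p+1$ and $s=q+1\ts$, and simplifying the exponents by using $\bi=\bl$ in the term with $\de_{il}$ and $\bj=\bk$ in the term with $\de_{kj}\ts$, one finds that the two sides agree. Hence \eqref{UY} extends to a homomorphism $\Ug\to\grpr\YMN$ of $\ZZ_2$-graded algebras. Surjectivity is clear: $\YMN$ is generated by the elements $T_{ij}^{\ts(r)}\ts$, so $\grpr\YMN$ is generated by their symbols $Y_{ij}^{\ts(r)}$ with $r\ge1\ts$, and every such symbol lies in the image of \eqref{UY}.

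The only genuinely delicate part I anticipate is the sign bookkeeping --- both in extracting the graded bracket relation from \eqref{3.1} by iteration and then passing to symbols, and in reconciling it with \eqref{glMN} through \eqref{UY}. The remaining ingredients --- the universal property of $\Ug\ts$, the formation of the associated graded algebra, and surjectivity --- are routine.
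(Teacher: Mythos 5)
Your proposal is correct and follows essentially the same route as the paper: check that the images of the basis elements $e_{ji}\ts u^{\ts r}$ satisfy the supercommutation relations \eqref{grel} inside $\grpr\YMN$, deduce those relations from \eqref{3.1} by passing to symbols (the paper delegates exactly this step to \cite[Section 1]{MNO}, which you instead spell out by iterating the coefficient identities), and conclude via the presentation of $\g$ and the universal property of $\Ug$, with surjectivity being immediate. Your sign bookkeeping, including the use of $\bi=\bl$ and $\bj=\bk$ in the $\de_{il}$ and $\de_{kj}$ terms, checks out and reproduces the paper's relation obtained by multiplying \eqref{grel} by $(-1)^{\ts\bi\ts\bk+\bi\ts\bl+\bk\ts\bl+\bj+\bl}$.
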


\begin{proof}
Due to \eqref{glMN} the supercommutation relations with $r\ts,s\ge0$
\begin{equation}
\label{grel}
[\,e_{\ts ji}\,u^{\ts r},\,e_{\ts lk}\,u^{\ts s}\,]=
\de_{il}\,e_{\ts jk}\,u^{\ts r+s}-
\de_{\ts kj}\,e_{\ts li}\,u^{\ts r+s}\,
{(-1)}^{\ts(\,\bi+\,\bj\,)(\ts\bk+\,\bl\,)}
\end{equation}
define the Lie superalgebra $\g\,$.
Multiplying the relation \eqref{grel} by 
$(-1)^{\ts\bi\ts\bk\ts+\ts\bi\ts\bl\ts+\ts\bk\ts\bl+\bj+\bl}$
and replacing the basis elements of $\g$ there by their images
under the mapping \eqref{UY} we get
\begin{equation*}
[\ts Y_{ij}^{\ts(r+1)},Y_{kl}^{\ts(s+1)}\ts\bigr]\,
(-1)^{\ts\bi\ts\bk\ts+\ts\bi\ts\bl\ts+\ts\bk\ts\bl}=
\de_{kj}\,Y_{il}^{\ts(r+s+1)}-\de_{il}\,Y_{kj}^{\ts(r+s+1)}\,.
\end{equation*}
The latter relation in $\grpr\YMN$ follows from \eqref{3.1},
see for instance \cite[Section 1]{MNO}.
This proves the homomorphism property of the assignment \eqref{UY}.
This homomorphism is clearly surjective 
and preserves the $\ZZ_{\ts2}$-grading.
\end{proof}

It immediately follows from \eqref{grprH} that
\eqref{UY} is a homomorphism of Hopf algebras.
Here we use the standard Hopf algebra structure on $\Ug$
as the universal enveloping algebra of a Lie superalgebra.
We shall demonstrate that the homomorphism \eqref{UY} is also injective.

By comparing \eqref{3.6} with \eqref{3.3}
we obtain that for any $z\in\CC$ the assignment
\begin{equation}
\label{3.66}
\End\CMN\ot\YMN\ts[[u^{-1}]]
\to
(\End\CMN)^{\ot\ts2}\ts[[u^{-1}]]:\, 
T(u)\mapsto R(u-z)
\end{equation}
defines a representation $\YMN\to\End\CMN$. More explicitly, under
\eqref{3.66} for any $r\ge0$
\begin{equation}
\label{2.19}
T_{ij}^{\ts(r+1)}\mapsto
-\,E_{ji}\,z^{\ts r}\,{(-1)}^{\,\bj}\ts.
\end{equation}
Note that this representation of $\YMN$ 
can be also obtained from the standard representation $\UMN\to\End\CMN$
by pulling back through the evaluation homomorphism \eqref{3.5}
and then back through the automorphism of $\YMN$ defined by mapping
$
T_{ij}(u)\mapsto T_{ij}(u-z)\,.
$ 

The comultiplication on $\YMN$
now allows us to define for $n=1\ts,2\ts,\ts\ldots$ a representation
$\YMN\to(\End\CMN)^{\ot\ts n}$ depending on 
$z_1\lc z_n\in\CC\,$. This is the tensor product of
the representations \eqref{3.66} where $z=z_1\lc z_n\,$.  
Due to \eqref{tu} and \eqref{3.7} under this representation\!
\begin{equation}
\label{3.666}
T(u)\mapsto R_{12}(u-z_1)\ts\ldots\ts R_{1,n+1}(u-z_n)\,.
\end{equation}

\begin{Proposition}
\label{P22}
Let the complex parameters\/ $z_1\lc z_n$ and the positive integer $n$ vary.
Then the kernels of the representations \eqref{3.666} of\/ $\YMN$
have the zero intersection.
\end{Proposition}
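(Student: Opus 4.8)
The plan is to show that if an element $y\in\YMN$ lies in the kernel of every representation \eqref{3.666}, then $y=0$. The natural strategy is to exploit the triangular structure of these representations together with the Poincar\'e--Birkhoff--Witt type spanning set for $\YMN$ coming from the ordered monomials in the generators \eqref{Tijr}. Concretely, fix a total order on the generators $T^{\ts(r)}_{ij}$ and argue that the monomials in these generators span $\YMN$; this follows from the defining relations \eqref{3.1}, which express any supercommutator $[T^{\ts(r)}_{ij},T^{\ts(s)}_{kl}]$ as a linear combination of products of at most two generators of total degree $r+s$, so one can always reorder a monomial at the cost of lower terms. Hence it suffices to prove that the images of these ordered monomials under the family \eqref{3.666}, for suitable choices of $n$ and $z_1\lc z_n$, are linearly independent in $\prod_n (\End\CMN)^{\ot n}$.

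The key computation is to read off the image of a monomial $T^{\ts(r_1)}_{i_1 j_1}\cdots T^{\ts(r_k)}_{i_k j_k}$ under \eqref{3.666}. Expanding $R_{1,p+1}(u-z_p)=1-P_{1,p+1}(u-z_p)\1$ and collecting the coefficient of $u^{-r}$ in \eqref{3.666}, one sees that $T^{\ts(r)}_{ij}$ acts, in the representation with parameters $z_1\lc z_n$, by a sum over subsets $S=\{p_1<\dots<p_r\}\subseteq\{1\lc n\}$ of a product of the transpositions $P_{1,p_a+1}$ weighted by $\pm z_{p_1}^{a_1}\cdots$; the leading behaviour, taking each $z_p$ to infinity at wildly different rates, is governed by the ``most spread out'' subset. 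The point is that choosing the $z_p$ generically and letting $n\to\infty$, the representation \eqref{3.666} approximates, in an appropriate graded sense, the regular-type action of $\Ug$ via \eqref{UY} and \eqref{2.19}: indeed \eqref{2.19} is exactly the $n=1$ case, giving the standard $\glMN$-module $\CMN$ with parameter $z$, and the tensor product over varying $z_1\lc z_n$ separates elements of $\Ug$ by the classical fact that finite tensor products of evaluation modules $\CMN$ at distinct points are faithful in the limit. This is where one invokes the representation-theoretic input referenced in the introduction, following \cite{N2}.

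The cleanest way to organize the argument is to pass to the associated graded. Suppose $y\in\YMN$ is nonzero and lies in all kernels; write $y$ in terms of the filtration defined by $\deg T^{\ts(r)}_{ij}=r-1$, and let $\bar y\in\grpr\YMN$ be its nonzero symbol. Under the representation \eqref{3.666} with $z_p$ rescaled appropriately as a function of the filtration degree, the leading term of the action of $y$ is controlled by the action of $\bar y$ through the homomorphism \eqref{UY}: composing \eqref{UY} with the tensor products of the standard evaluation modules recovers, on $\grpr\YMN$, precisely the tensor products of evaluation $\glMN$-modules for $\Ug$. Since $\bar y\neq 0$ in $\grpr\YMN$ and the homomorphism $\Ug\to\grpr\YMN$ of \eqref{UY} is surjective, $\bar y$ is the image of a nonzero element of $\Ug$ (working modulo the not-yet-established injectivity, one only needs surjectivity here plus faithfulness of the tensor-product family on $\Ug$), and the faithfulness of the family of tensor products of evaluation modules over $\Ug$ forces that leading term to be nonzero for some choice of $n$ and $z_1\lc z_n$ — contradicting $y$ being in the kernel. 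Therefore $y=0$.

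The main obstacle is the last step: making precise the claim that tensor products $\CMN(z_1)\ot\dots\ot\CMN(z_n)$ of evaluation modules, over all $n$ and all tuples of distinct complex numbers, have zero common annihilator in $\Ug$ — equivalently, that they separate points of $\Ug$. One approach is to use that $\g=\glMN[u]$ embeds into $\prod_z \glMN$ via evaluations $u\mapsto z$, so that the symmetric powers of the standard module already separate $\glMN$ at each $z$, and then take tensor products; but superalgebra subtleties (the standard module $\CMN$ is not faithful enough on its own, and one must be careful with the $\ZZ_2$-grading and with which $\glMN$-modules arise as submodules of tensor powers of $\CMN$) mean one has to verify that enough of $\Ug$ is detected. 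The honest resolution is to cite the analogous argument in \cite{N2} for the queer case, where exactly this faithfulness statement for tensor products of vector evaluation modules is established and then transported to the Yangian via the filtration; the bookkeeping of signs and the rescaling of the $z_p$ relative to the filtration degree is routine once that input is in hand.
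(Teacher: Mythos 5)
Your reduction step is essentially the paper's: pass to the second filtration (degree $r-1$ on $T^{\ts(r)}_{ij}$), observe that the top-degree-in-$z$ part of the image of an element under \eqref{3.666} is the image, under the tensor product of the evaluation representations \eqref{evalrep} of $\Ug$, of the element of $\Ug$ corresponding to its symbol via \eqref{UY}, and thereby reduce everything to the claim that the tensor products of evaluation representations of $\Ug$, with $n$ and $z_1\lc z_n$ varying, have zero common kernel. (No limiting or rescaling of the $z_p$ is actually needed for this step: homogeneity in the $z$'s does the work, since the degree-$D$ homogeneous component of the image of a monomial of filtration degree $D$ is exactly the evaluation image of the corresponding product in $\Ug$.)

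The genuine gap is that you never prove this key faithfulness statement for $\Ug$, which is the bulk of the paper's proof; you call it ``the main obstacle'' and propose to cite \cite{N2}, but \cite{N2} treats the queer superalgebra, and in any case the statement is not the ``classical fact'' you invoke. The real difficulty is not the super-signs but the one-dimensional centre of $\glMN$: the element $f_1=\sum_i e_{ii}$ maps to the identity under every evaluation \eqref{evalrep}, so under $\rho_{z_1\ldots z_n}$ it acts by the scalar $n$ for \emph{every} choice of $z_1\lc z_n$. Hence for each fixed $n$ the common kernel is large (it contains $f_1-n$ and much more), and joint faithfulness can only hold because $n$ varies; any argument that treats the family at fixed $n$, or that only uses genericity of the $z_p$, fails at exactly this point. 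The paper handles it by choosing a PBW basis of $\g$ ordered so that the elements $f_1u^{\ts r}$ come last, splitting each monomial into its ``non-central'' prefix and its tail of powers of $f_1u^{\ts r}$, isolating the prefix via a supersymmetrisation and a leading-term analysis in the $z$'s (linear independence of the expressions \eqref{2.3.3} modulo the subspace $W$), and finally letting $n$ vary to obtain relations of the form $\sum_{m}L^{\,r_1\ldots r_c\,0\ldots0}_{\,p_1\ldots p_h\,1\ldots1}\,n^{\ts m-c}=0$, which force all coefficients to vanish. Your proposal contains no mechanism replacing this analysis, so as it stands the central claim on which your whole argument rests is unproven.
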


\begin{proof}
Take any finite linear combination of the products
$T_{i_1j_1}^{\ts(r_1+1)}\ldots\,T_{i_mj_m}^{\ts(r_m+1)}\in\YMN$ with
$
A_{\,i_1j_1\ldots\ts i_mj_m}^{\,\,r_1\ldots\ts r_m}\in\CC
$
being the respective coefficients.
In this linear combination the number $m\ge0$ and
indices $r_1\lc r_m\ge0$ may vary.
Consider the image of this linear combination 
under the representation 
$\YMN\to(\End\CMN)^{\ot\ts n}$ defined by \eqref{3.666}.
This image depends on $z_1\lc z_n$ polynomially. 
Let $A$ be the sum of those terms of
this polynomial which have the maximal total degree in $z_1\lc z_n\,$.
Let $d$ be this degree.

Consider the second of our two ascending filtrations
on the associative algebra $\YMN\,$,
the corresponding graded algebra being $\grpr\YMN\,$.
Equip the tensor product $\YMN^{\ts\ot\ts n}$ with the ascending 
filtration where the degree is the sum of the degrees on the tensor factors. 
Then by the definition \eqref{3.7}
under the comultiplication $\YMN\to\YMN^{\ts\ot\ts n}$ for $r\ge0$
\begin{equation*}
T_{ij}^{\ts(r+1)}\mapsto\,
\sum_{h=1}^n\,
1^{\ot\ts (h-1)}\ot T_{ij}^{\ts(r+1)}\ot1^{\ot\ts(n-h)}
\,+\,
\text{lower degree terms.}
\end{equation*}
Therefore the sum $A\in(\End\CMN)^{\ot\ts n}$ 
coincides with the image of the sum
\begin{equation*}
\sum_{r_1+\ldots+r_m=\ts d}
A_{\,i_1j_1\ldots\ts i_mj_m}^{\,\,r_1\ldots\ts r_m}\
e_{\ts j_1i_1}\ts u^{\ts r_1}\ldots\,e_{\ts j_mi_m}\ts u^{\ts r_m}\,
(-1)^{\,m\ts+\ts\bj_1\ts+\ts\ldots\ts+\ts\bj_m}\in\Ug
\end{equation*}
under the tensor product of the evaluation representations
\begin{equation}
\label{evalrep}
\Ug\to\End\CMN:\,
e_{ji}\,u^{\ts r}
\mapsto 
E_{ji}\,z^{\ts r}
\end{equation}
at the points $z=z_1\lc z_n\,$. Here we used the explicit
description \eqref{2.19}
of the representation $\YMN\to\End\CMN$ corresponding to $z\in\CC\,$.
Due to Proposition \ref{P21} it now 
suffices to show that when the complex parameters $z_1\lc z_n$
and the positive integer $n$ vary, the kernels
of the tensor products
of the evaluation representations
of the algebra $\Ug$ at $z=z_1\lc z_n$
have the zero intersection.
This will also imply that the homomorphism \eqref{UY} is injective.
 
\vbox{
Choose any $\ZZ_2\ts$-homogeneous 
basis $f_1\lc f_{\ts(M+N)^2}$ of $\glMN$
such that the first vector $f_1$ is 
\begin{equation}
\label{esum}
\sum_{i=1}^{M+N}e_{ii}\,.
\end{equation}
The corresponding elements of $\End\CMN$ 
will be denoted by $F_1\lc F_{\ts(M+N)^2}\,$.
Hence $F_1=1\,$. 
The elements $f_{\ts p}\,u^{\ts r}$ 
with $r=0\ts,1\ts,2\ts,\ts\ldots$
and $p=1\ts\lc (M+N)^2$ constitute a basis of $\g\,$.
Choose any total ordering of this basis which ends with the 
infinite sequence
$
\ldots\,,\ts f_1\ts u^{\ts 2},\ts f_1\ts u\,,\ts f_1\,.
$
Take any finite linear combination $L$ of the products
\begin{equation}
\label{2.3.1}
f_{\ts p_1}u^{\ts r_1}\ldots\,f_{\ts p_m}u^{\ts r_m}\in\Ug
\end{equation}
with $L_{\,p_1\ldots\ts p_m}^{\,r_1\ldots\ts r_m}\in\CC$
being the coefficients.
We assume that the factors in the products are
arranged according to our ordering of the basis of $\g\,$.
Due to the supercommutation relations in $\Ug$ 
we assume it without any loss of generality.
The basis elements of $\ZZ_2\ts$-degree $0$
may occur in any product \eqref{2.3.1} with a multiplicity
but those of $\ZZ_2\ts$-degree $1$ may occur at most~once.
}

Let us denote by $\rho_z$ the evaluation representation \eqref{evalrep}.
More generally, denote by $\rho_{\ts z_1\ldots\ts z_n}$ the tensor product 
$\rho_{\ts z_1}\ot\ldots\ot\rho_{\ts z_n}$
pulled back through $n\ts$-fold comultiplication on $\Ug\,$.
Hence $\rho_{\ts z_1\ldots\ts z_n}$ is a homomorphism of 
associative algebras
\begin{equation*}
\Ug\to(\End\CMN)^{\ot\ts n}:\,
f_{\ts p}\,u^{\ts r}\mapsto
F_{\ts p}\,z_1^{\ts r}\ot1^{\ts\ot\ts(n-1)}+\ldots+
1^{\ts\ot\ts(n-1)}\ot F_{\ts p}\,z_n^{\ts r}\,.
\end{equation*}
Suppose that $\rho_{\ts z_1\ldots\ts z_n}(L)=0$ for every $n$
and all $z_1\lc z_n\in\CC\,$.
We need to prove that $L=0\,$.

For each product \eqref{2.3.1} there is a number $a$ such that 
$p_1\lc p_{\ts a}>1$ but $p_{\ts a+1}\ts\lc p_m=1\,$.
This is due to our ordering of the basis of $\g\,$.
The numbers $a$ for different products
\eqref{2.3.1} may differ, and we do not exclude the case $a=0\,$.
Let $h$ be the maximum of the numbers $a\,$. 

Suppose $n\ge h\,$.
Let $\om_{\ts h}$ be the supersymmetrisation map of
the tensor product $\glMN[u]^{\ts\ot\ts h}$ normalised so that 
$\om_{\ts h}^{\ts2}=h\ts!\,\om_{\ts h}\,$.
Let $W$ be the subspace
of $(\End\CMN)^{\ot\ts n}$ spanned by the vectors
$F_{q_{\ts1}}\ot\ldots\ot F_{q_{\ts n}}$
where at least one
of the indices $q_{\ts1}\lc q_{\,h}$ is $1\,$.
If $h=0$ then this subspace is assumed to be zero.
Applying the homomorphism 
$\rho_{\ts z_1\ldots\ts z_n}$ to a product \eqref{2.3.1} 
with $a=h$ gives
\begin{equation}
\label{2.3.2}
(\ts\rho_{\ts z_1}\ot\ldots\ot\rho_{\ts z_{\ts h}}\ts)\,
(\,\om_{\ts h}\ts
(\ts f_{\ts p_1}u^{\ts r_1}
\ot\ldots\ot 
f_{\ts p_{\ts h}}u^{\ts r_h}\ts)\ts)
\ot1^{\ts\ot\ts(n-h)}
\prod_{b=h+1}^m
(\ts z_1^{\ts r_b}+\ldots+z_n^{\ts r_b}\ts)
\end{equation}
modulo the subspace $W$. Applying
$\rho_{\ts z_1\ldots\ts z_n}$ to a product \eqref{2.3.1} 
with $a<h$ gives an element~of~$W$.
But a linear combination of the 
expressions \eqref{2.3.2} belongs to $W$
only if this combination is zero.

For each product \eqref{2.3.1} with $a=h$
there is a number $c\ge h$ such that $r_{h+1}\ge\ldots\ge r_c>0$ 
but $r_{\ts c+1}\ts\lc r_m=0\,$.
This is due to our ordering of the basis of $\g\,$.
Then \eqref{2.3.2} equals 
\begin{equation}
\label{2.3.3}
(\ts\rho_{\ts z_1}\ot\ldots\ot\rho_{\ts z_{\ts h}}\ts)\,
(\,\om_{\ts h}\ts
(\ts f_{\ts p_1}u^{\ts r_1}
\ot\ldots\ot 
f_{\ts p_{\ts h}}u^{\ts r_h}\ts)\ts)
\ot1^{\ts\ot\ts(n-h)}
\prod_{b=h+1}^c
(\ts z_1^{\ts r_b}+\ldots+z_n^{\ts r_b}\ts)
\end{equation}
multiplied by $n^{\ts m-c}$.
Let $g$ be the maximum of the numbers $c$
for all products \eqref{2.3.1} with $a=h\,$.
 
Suppose $n\ge g\,$. Consider the pairs of sequences of indices 
$p_1\lc p_{\ts m}$ and $r_1\lc r_m$ 
showing in $L$ in the products \eqref{2.3.1}
with $a=h\,$. For every such a pair there is
$c\in\{\ts h\lc g\ts\}\,$. 
Then we have
$p_{\ts h+1}\lc p_{\ts m}=1$ and $r_{c+1}\lc r_m=0\,$.
Take all different pairs of sequences $p_1\lc p_{\ts h}$ and $r_1\lc r_c$
arising in this way.
The expressions \eqref{2.3.3} corresponding to the latter pairs
are linearly independent as polynomials in $z_1\lc z_n$ 
with values in $(\End\CMN)^{\ot\ts n}\,$.
This is again due to our ordering of the basis of $\g\,$.
Here we also employ the observation that~in~\eqref{2.3.3} the image of
$\rho_{\ts z_1}\ot\ldots\ot\rho_{\ts z_{\ts h}}$
does not depend on the parameters $z_{\ts h+1}\lc z_n$
while the product over $b=h+1\lc c$ in \eqref{2.3.3}
does depend on these parameters whenever $c>h\,$. 
Therefore if $\rho_{\ts z_1\ldots\ts z_n}(L)=0$ for a certain $n\ge g$
and for all $z_1\lc z_n\in\CC$ then for each of the latter pairs
\begin{equation*}
\sum_{m=c}^\infty\ 
L_{\,p_1\ldots p_h\,1\ldots\ts 1}^{\,r_1\ldots\,r_c\,0\ldots\ts0}\ 
n^{\ts m-c}=0\,.
\end{equation*}
There are exactly $m$ lower indices and also $m$ upper indices
in the coefficient 
$L_{\,p_1\ldots p_h\,1\ldots\ts 1}^{\,r_1\ldots\,r_c\,0\ldots\ts0}$ above.
By letting the number $n$ vary we now prove that all these 
coefficients vanish. 
\end{proof}

In the course of the proof of Proposition \ref{P22} we established that
the homomorphism \eqref{UY} is injective.
Together with Proposition \ref{P21} 
and the observation made just after its proof this~yields 

\begin{Theorem}
\label{T18}
The Hopf algebras\/ $\Ug$ and\/ $\grpr\YMN$
are isomorphic via \eqref{UY}.
\end{Theorem}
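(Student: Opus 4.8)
The plan is to assemble the theorem from the two preceding propositions. By Proposition~\ref{P21} the assignment \eqref{UY} already defines a surjective homomorphism $\Ug\to\grpr\YMN$ of $\ZZ_2$-graded associative algebras, and by the formulas \eqref{grprH} it is compatible with the comultiplications, counits and antipodes on the two sides, hence is a homomorphism of graded Hopf algebras. So the whole content left to prove is that \eqref{UY} is injective; once that is done it is an isomorphism of graded Hopf algebras and the proof is complete.

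Injectivity is essentially contained in the proof of Proposition~\ref{P22}, and I would phrase it as follows. Since \eqref{UY} respects the gradings, its kernel is a graded subspace of $\Ug$, so it suffices to take a nonzero element $L\in\Ug$ homogeneous of some degree $d$ for the grading of $\g$ by powers of $u$, and to show that its image in $\grpr\YMN$ cannot vanish. Writing $L$ as a linear combination of ordered monomials in the $e_{\ts ji}\ts u^{\ts r}$, let $\widetilde L\in\YMN$ be the element obtained from $L$ by replacing each monomial $e_{\ts j_1i_1}\ts u^{\ts r_1}\ldots\ts e_{\ts j_mi_m}\ts u^{\ts r_m}$ by $(-1)^{\ts m}\ts(-1)^{\ts\bj_1+\ldots+\bj_m}\,T_{i_1j_1}^{\ts(r_1+1)}\ldots\ts T_{i_mj_m}^{\ts(r_m+1)}\,$, in accordance with \eqref{UY}. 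Then vanishing of the image of $L$ in $\grpr\YMN$ says exactly that $\widetilde L$ has filtration degree at most $d-1$ for the second filtration, and hence that the image of $\widetilde L$ under every representation \eqref{3.666} is a polynomial in $z_1\lc z_n$ of degree at most $d-1\,$. On the other hand, the computation carried out in the proof of Proposition~\ref{P22} identifies the degree-$d$ homogeneous component of that image, in the variables $z_1\lc z_n$, with $\rho_{\ts z_1\ldots z_n}(L)$ itself --- the signs in \eqref{UY} having been chosen precisely so that this identification is sign-free. So vanishing of the image of $L$ would force $\rho_{\ts z_1\ldots z_n}(L)=0$ for all $n$ and all $z_1\lc z_n\in\CC\,$, and then $L=0\,$, a contradiction, provided one knows that the tensor products of the evaluation representations \eqref{evalrep} of $\Ug$ at varying points $z_1\lc z_n$ have zero common kernel.

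That last faithfulness statement is where the real work lies, and I expect it to be the main obstacle; it is precisely what the concluding part of the proof of Proposition~\ref{P22} establishes. One fixes a Poincar\'e--Birkhoff--Witt ordering of the basis of $\g$ with the central element \eqref{esum} placed last, uses the supersymmetrisation operator $\om_{\ts h}$ to extract --- modulo the subspace $W$ of tensors having a distinguished factor equal to $1$ --- a family of $(\End\CMN)^{\ot\ts n}$-valued polynomials in $z_1\lc z_n$ that are linearly independent once $n$ is large enough, and then, by letting $n$ vary, reads off from a polynomial identity in $n$ that every coefficient of $L$ must vanish. Granting this, injectivity of \eqref{UY} follows, and combined with Proposition~\ref{P21} it yields the isomorphism of Hopf algebras asserted in the theorem.
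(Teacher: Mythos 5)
Your proposal is correct and follows essentially the same route as the paper: Theorem \ref{T18} is obtained there by combining Proposition \ref{P21} (surjective Hopf algebra homomorphism, via \eqref{grprH}) with the injectivity of \eqref{UY}, which is established inside the proof of Proposition \ref{P22} exactly by the degree-in-$z_1\lc z_n$ comparison you describe, reducing everything to the faithfulness of the family of tensor products of evaluation representations of $\Ug$. Your more explicit phrasing of why vanishing in $\grpr\YMN$ forces $\rho_{\ts z_1\ldots z_n}(L)=0$ is just an unpacking of the paper's remark that the faithfulness statement ``will also imply that the homomorphism \eqref{UY} is injective,'' so there is no substantive difference.
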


Let us now invoke the Poincar\'e\ts-Birkhoff-Witt theorem for the universal
enveloping algebras of Lie superalgebras \cite[Theorem 5.15]{MM}.
By applying this theorem to
the Lie superalgebra $\g$ we now obtain its analogue 
for the Yangian $\YMN\,$.

\begin{Corollary}
The supercommutative algebra\/ $\gr\YMN$ is freely generated 
by $X_{ij}^{\ts(r)}$ of the $\ZZ_2$-degree $\bi+\bj$ 
where $r=1\ts,2\ts,\ts\ldots$ and $i\,,j=1\ts\lc M+N\,$. 
\end{Corollary}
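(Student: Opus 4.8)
The plan is to produce an explicit basis of $\YMN$ consisting of ordered monomials in the generators \eqref{Tijr}, and to deduce the freeness statement directly from it. Fix a total ordering of the basis $\{\ts e_{\ts ji}\ts u^{\ts r}:r\ge0\ts\}$ of $\g$ and transport it, via the correspondence $e_{\ts ji}\ts u^{\ts r-1}\leftrightarrow T_{ij}^{\ts(r)}$ underlying \eqref{UY}, to a total ordering of the generating set $\{\ts T_{ij}^{\ts(r)}:r\ge1\ts\}$. Call a product $T_{i_1j_1}^{\ts(r_1)}\ldots\,T_{i_mj_m}^{\ts(r_m)}$ an \emph{ordered monomial} if its factors are weakly increasing in this ordering and each generator of $\ZZ_2$-degree $1$ appears in it at most once. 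Write $F_{\ts d}$ for the span of the monomials of first-filtration degree at most $d\,$, so that $\gr\YMN=\bigoplus_d F_{\ts d}/F_{\ts d-1}\,$.

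First I would show that the ordered monomials span $\YMN\,$. A short induction on $r$ using the relations \eqref{3.1} gives $[\ts T_{ij}^{\ts(r)},T_{kl}^{\ts(s)}\ts]\in F_{\ts r+s-1}\,$; moreover, when $\bi+\bj=1$ the square $(T_{ij}^{\ts(r)})^{\ts2}$, being a scalar multiple of the supercommutator $[\ts T_{ij}^{\ts(r)},T_{ij}^{\ts(r)}\ts]$, lies in $F_{\ts2r-1}\,$. Hence interchanging two adjacent out-of-order factors of a monomial, or deleting a repeated odd generator, replaces it by a reordered monomial of the same first-filtration degree together with terms of strictly smaller first-filtration degree. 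A double induction --- outer on the first-filtration degree, inner on the number of inversions --- then shows that every monomial in the $T_{ij}^{\ts(r)}$ is a linear combination of ordered monomials.

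Next I would prove that the ordered monomials are linearly independent. Suppose that a finite linear combination $\sum_\al c_\al\,M_\al$ of distinct ordered monomials vanishes in $\YMN$ but not all $c_\al$ are zero. To an ordered monomial $M=T_{i_1j_1}^{\ts(r_1)}\ldots\,T_{i_mj_m}^{\ts(r_m)}$ attach its second-filtration degree $d(M)=(r_1-1)+\ldots+(r_m-1)\,$, and let $d$ be the largest value of $d(M_\al)$ with $c_\al\ne0\,$. The image of $M$ in the degree $d$ component of $\grpr\YMN$ is the ordered monomial $Y_{i_1j_1}^{\ts(r_1)}\ldots\,Y_{i_mj_m}^{\ts(r_m)}$ in the elements $Y_{ij}^{\ts(r)}\,$, and distinct ordered $T$-monomials of a given second-filtration degree give distinct such ordered $Y$-monomials. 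By Theorem \ref{T18} together with the Poincar\'e\ts-Birkhoff-Witt theorem \cite[Theorem 5.15]{MM} applied to $\g\,$, these ordered monomials in the $Y_{ij}^{\ts(r)}$ form a basis of $\grpr\YMN\,$; projecting the relation $\sum_\al c_\al\,M_\al=0$ to the degree $d$ component therefore forces $c_\al=0$ whenever $d(M_\al)=d\,$, contradicting the choice of $d\,$. Hence the ordered monomials in the $T_{ij}^{\ts(r)}$ form a basis of $\YMN\,$.

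Finally I would pass to $\gr\YMN\,$. For each $d$ the ordered monomials of first-filtration degree at most $d$ form a basis of $F_{\ts d}\,$, so the images of the ordered monomials of first-filtration degree exactly $d$ --- namely the ordered monomials in the $X_{ij}^{\ts(r)}$ --- form a basis of $F_{\ts d}/F_{\ts d-1}\,$. Since $\gr\YMN$ is supercommutative and each $X_{ij}^{\ts(r)}$ with $\bi+\bj=1$ squares to zero, this is precisely the assertion that $\gr\YMN$ is the free supercommutative algebra on the $X_{ij}^{\ts(r)}\,$. The step that needs the most care is the interaction of the two filtrations in the middle of the argument: the spanning part must be run inside the first filtration, where a swap of factors strictly lowers the degree, whereas linear independence is read off from the second filtration through the isomorphism $\grpr\YMN\cong\Ug$ of Theorem \ref{T18}, and one must verify that the symbol in $\grpr\YMN$ of an ordered $T$-monomial is again an ordered monomial in the $Y_{ij}^{\ts(r)}\,$ --- which is exactly why the ordering on the $T_{ij}^{\ts(r)}$ was chosen compatibly with a basis ordering of $\g\,$.
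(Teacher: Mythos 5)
Your argument is correct and follows essentially the same route as the paper: the corollary is deduced from the isomorphism $\Ug\cong\grpr\YMN$ of Theorem \ref{T18} together with the Poincar\'e\ts-Birkhoff-Witt theorem for Lie superalgebras \cite[Theorem 5.15]{MM}. Your write-up simply makes explicit the standard two-filtration bookkeeping (spanning via the first filtration, linear independence read off in $\grpr\YMN$) that the paper leaves implicit.
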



\section{Centre of the Yangian}
\label{sec:2}

Here we will give a description of the centre of the
algebra $\YMN\,$. An element of $\YMN$ is called
\emph {central} if it supercommutes with each element of $\YMN\,$.
However, we will see that the central elements of $\YMN$ have
the $\ZZ_{\ts2}$-degree $0\,$.
Hence they commute with each 
element of $\YMN$ in the usual sense.
Our description comes from computing the square 
of the antipodal mapping $\S$ of $\YMN\,$.
Another description of the centre of $\YMN\,$ will be 
given in the next section. In that section we will also establish a 
correspondence between the two descriptions.

\begin{Proposition}
\label{zu}
There is a formal power series $Z(u)$ in $u^{-1}$ with coefficients in
the centre of\/ $\YMN$ and with 
leading term $1$ such that for all indices $i$ and $j$
\begin{align}
\label{zetav}
\sum_{k=1}^{M+N}
T_{kj}(u+M-N)\,\TP_{ik}(u)&=\delta_{ij}\,Z(u)\,,
\\[4pt]
\label{zetau}
\sum_{k=1}^{M+N}
\TP_{kj}(u)\,T_{ik}(u+M-N)&=\delta_{ij}\,Z(u)\,.
\end{align}
\end{Proposition}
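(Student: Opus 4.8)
The plan is to prove both identities at once by exhibiting a single matrix over $(\End\CMN)\ot\YMN\ts[[u^{-1}]]$, built from $T(u+M-N)$ and the transposed inverse matrix $\TS(u)=\tau\ot\id\,(T(u)^{-1})$ of \eqref{tsu}, and showing that it equals the scalar matrix $1\ot Z(u)$ with $Z(u)$ central; the two displayed formulas are then its two orders. The series $\TP_{ik}(u)$ in \eqref{zetav} are, up to a sign, the matrix entries of $\TS(u)$, and since by \eqref{3.0} both $T(u)$ and $\TS(u)$ reduce to $1$ modulo $u^{-1}$, they are invertible over $\YMN\ts[[u^{-1}]]$. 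Hence once one of \eqref{zetav}, \eqref{zetau} is proved with a central right-hand side, the other follows formally: a relation $A(u)\ts B(u)=1\ot Z(u)$ with $Z(u)$ central and $A(u)$ invertible gives $B(u)\ts A(u)=A(u)^{-1}(1\ot Z(u))\ts A(u)=1\ot Z(u)$, and $\TS(u)$ is a two-sided inverse. Moreover the leading term of $Z(u)$ is automatically $1$, since $T_{ij}(u)=\de_{ij}+O(u^{-1})$ forces $\TP_{ij}(u)=\de_{ij}+O(u^{-1})$.

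The main tool is the crossing symmetry of the Yang R-matrix. Put $\RS(u)=\tau\ot\id\,(R(u))=1-Q\ts u^{-1}$, where $Q=\tau\ot\id\,(P)$ is the partial super-transpose of $P$ in the first tensor factor. A direct computation gives $Q^{\ts2}=(M-N)\,Q$; the number $M-N=\str\,\id_{\ts\CMN}$ occurring here is exactly the origin of the shift by $M-N$ in the statement, and it yields the crossing-unitarity relation $\RS(u)\,\RS(M-N-u)=1$. Starting from the defining relation \eqref{3.3}, from its inverse form \eqref{3.33}, and from the relation \eqref{3.333} for $\TS$, and using $(\tau\ot\tau)(R)=R$ from \eqref{Rtt}, I would take a partial super-transpose in one of the two auxiliary tensor factors to obtain a mixed RTT relation linking $T_1(u)$, $\TS_2(v)$, and the matrix $\RS$ at an argument shifted by $M-N$. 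Choosing the spectral parameters so that the two occurrences of $\RS$ become crossing-inverse, the R-matrix factor collapses to $1$, and the surviving relation forces the matrix $C(u)$ with entries $C_{ij}(u)=\sum_k T_{kj}(u+M-N)\,\TP_{ik}(u)$ to commute with $T_2(v)$ for every $v$; read entrywise, each $C_{ij}(u)$ then supercommutes with all $T_{kl}(v)$, hence is central, and the same collapsed relation forces $C(u)$ to be diagonal with all diagonal entries equal, so $C(u)=1\ot Z(u)$.

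The step I expect to be the main obstacle is producing this mixed transposed RTT relation with the correct $\RS$ and the correct argument shift: partial super-transposition does not simply reverse an operator product once the untransposed tensor slots still interact, and here $R(u-v)$ acts nontrivially on both auxiliary spaces, so the sign bookkeeping is delicate. This is also precisely the point at which the $N=0$ argument of \cite{MNO} has to be modified, with the super-dimension $M-N$ playing the role of the dimension. Everything after that — collapsing the R-matrix via crossing unitarity, reading off centrality of the entries of $C(u)$, concluding that $C(u)$ is a central scalar with leading term $1$, and deducing \eqref{zetau} from \eqref{zetav} — is then routine.
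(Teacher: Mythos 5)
Your overall framework is the same as the paper's: the element $Q$ obtained from $P$ by a partial supertranspose, the identity $Q^{\ts2}=(M-N)\,Q$, and the mixed relation $(\RS(u-v)\ot1)\,T_1(u)\,\TS_2(v)=\TS_2(v)\,T_1(u)\,(\RS(u-v)\ot1)$, which is exactly \eqref{trater}. But the mechanism you give for extracting the two identities is not right: nothing ``collapses to $1$''. In \eqref{trater} there is a single $\RS(u-v)$ on each side, at the same argument, so no choice of spectral parameters makes two occurrences of $\RS$ crossing-inverse; what one actually does is multiply by $u-v-M+N$ and set $u=v+M-N$, which replaces $\RS$ by the rank-one element $Q$ and yields \eqref{qresi}. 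The scalar form of the answer then rests on a property you never invoke, namely that the image of $Q$ on $(\CMN)^{\ot\ts2}$ is one-dimensional: the left side of \eqref{qresi} has image inside that line, the right side vanishes on the kernel of $Q$, hence their common value is $Q\ot Z(u)$, and reading off the two sides gives \eqref{zetav} and \eqref{zetau} simultaneously. Without this argument, ``diagonal with all diagonal entries equal'' does not follow, and your separate deduction of \eqref{zetau} from \eqref{zetav} by inverting (which in any case presupposes centrality of $Z(u)$) becomes unnecessary once the rank-one argument is made.

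The more serious gap is centrality, which you dismiss as routine. The specialized relation \eqref{qresi} contains only one spectral parameter, so it cannot by itself force the coefficients of $Z(u)$ to commute with $T_{kl}(v)$ at an independent $v$; your sentence that ``the surviving relation forces $C(u)$ to commute with $T_2(v)$ for every $v$'' has no mechanism behind it. In the paper this is a genuinely separate step carried out in three auxiliary tensor factors: one pushes $T_1(u)$ through $T_2(v+M-N)\,\TS_3(v)$ using \eqref{3.3} and \eqref{trater} (the chain \eqref{3.666666}), and then contracts with $Q_{23}$ by means of the crossing-type identity $Q_{23}\,\RS_{13}(u+M-N)\,R_{12}(u)=(1-u^{-2})\,Q_{23}$ of \eqref{QR}, itself a consequence of \eqref{3.52}. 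The R-matrix factors then drop out up to the nonzero scalar $1-(u-v-M+N)^{-2}$, giving $T_1(u)\,(Q_{23}\ot Z(v))=(Q_{23}\ot Z(v))\,T_1(u)$ and hence centrality of all coefficients of $Z(v)$. This three-space computation is the main content of the proof and is missing from your proposal; as written, the proposal establishes (after repairing the residue and rank-one points) only the existence of a series $Z(u)$ satisfying the two identities, not that its coefficients are central.
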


\begin{proof}
By using the definition \eqref{pop} introduce the element of
the algebra $(\End\CMN)^{\ts\ot\ts2}$
\begin{equation}
\label{Q}
Q=(\ts\id\ot\tau\ts)\ts(\ts P\ts)=
\sum_{i,j=1}^{M+N}E_{ij}\ot E_{ij}\,{(-1)}^{\,\bi\ts\bj}\ts.
\end{equation}
The image of the action of $Q$ on 
$(\CMN)^{\ts\ot\ts2}$ is one dimensional and is spanned by the vector
\begin{equation*}
\sum_{i=1}^{M+N}e_{i}\ot e_{i}\,.
\end{equation*}
Here we regard $Q$ as an element of the algebra 
$\End((\CMN)^{\ts\ot\ts2})$ 
by identifying this algebra with $(\End\CMN)^{\ts\ot\ts2}$ via \eqref{XY}.
We also have
$Q^{\ts2}=(M-N)\,Q\,$. By using the latter relation we get
\begin{equation}
\label{rutinv}
((\ts\id\ot\tau\ts)\ts(\ts R(u)\ts))^{-1}=(\ts1-Q\,u\1)^{-1}=
1+Q\,(\ts u-M+N\ts)^{-1}\,.
\end{equation}
The rational function of the variable $u$ 
given by the equalities \eqref{rutinv} will be denoted by $\RS(u)\,$.

Now multiply both sides of the relation 
\eqref{3.3}
by $T_2^{-1}(v)$ on the left and right, and then apply
$\tau$ relative
to the second tensor factor $\End\CMN$ in 
$(\End\CMN)^{\ot2}\ot\YMN\,$.
Multiplying the resulting relation on the left and right by 
$\RS(u-v)\ot 1$ yields
\begin{equation}
\label{trater}
(\RS(u-v)\ot1)\,
T_1(u)\,\TS_2(v)=
\TS_2(v)\ts T_1(u)\,
(\RS(u-v)\ot1)\,.
\end{equation}
Multiplying the latter relation by $u-v-M+N$ and then setting
$u=v+M-N$ we get
\begin{equation}
\label{qresi}
(Q\ot1)\,T_1(v+M-N)\,\TS_2(v)=
\TS_2(v)\,T_1(v+M-N)\,(Q\ot1)\,,
\end{equation}
see \eqref{rutinv}.
Since the image of $Q$ in $(\CMN)^{\ts\ot\ts2}$
is one dimensional, either side of the relation \eqref{qresi}
must be equal to $Q\ot Z(v)$ where $Z(v)$
is a certain power series in $v^{-1}$
with coefficients from $\YMN\,$. 
The equality 
of the left hand side and of the right hand side of \eqref{qresi}
to $Q\ot Z(v)$ 
is equivalent respectively to \eqref{zetav} and \eqref{zetau}. 
We just need to replace the variable $v$ in \eqref{qresi} by $u\,$.

It is immediate from \eqref{3.0} and \eqref{zetav} that
every coefficient of the series $Z(u)$ has
$\ZZ_{\ts2}$-degree $0$ and that its leading
term is $1\,$. 
Let us prove that all these coefficients
are central in $\YMN\,$.

To this end we will work with the elements \eqref{3.22}
where $n=3\,$. By using \eqref{3.3} and \eqref{trater},
\begin{gather}
(\RS_{13}(u-v)\,R_{12}(u-v-M+N)\ot1)\,
T_1(u)\,T_2(v+M-N)\,\TS_3(v)=
\notag
\\
(\RS_{13}(u-v)\ot1)\,
T_2(v+M-N)\,T_1(u)\,\TS_3(v)\,
(R_{12}(u-v-M+N)\ot1)=
\notag
\\
T_2(v+M-N)\,\TS_3(v)\,T_1(u)\,
(\RS_{13}(u-v)\,R_{12}(u-v-M+N)\ot1)\,.
\label{3.666666}
\end{gather}
On the other hand, due to \eqref{3.52} we have the identity in the algebra
$(\End\CMN)^{\ot\ts3}(\ts u\ts)$
\begin{equation*}
R_{13}(-u)\,P_{23}\,R_{12}(u)=(\,1-u^{-2}\,)\,P_{23}\,.
\end{equation*}
By applying to it the antiautomorphism $\tau$
relative to the third tensor factor of $(\End\CMN)^{\ot\ts3}$
and then using the definition \eqref{rutinv} we get
\begin{equation}
\label{QR}
Q_{23}\,\RS_{13}(u+M-N)\,R_{12}(u)=(\,1-u^{-2}\,)\,Q_{23}\,.
\end{equation}
Therefore multiplying the first and third lines of the display
\eqref{3.666666} by $Q_{23}\ot1$ on the left yields
\begin{equation*}
(\ts1-(u-v-M+N)^{-2}\ts)\,
T_1(u)\,(Q_{23}\ot Z(v))\,=\,
(Q_{23}\ot Z(v))\,T_1(u)\,
(\ts1-(u-v-M+N)^{-2}\ts)\,.
\end{equation*}
We just need to replace the variable $u$ in \eqref{QR} by $u-v-M+N$
and use the relation \eqref{qresi}.
The last relation implies that any generator $T_{ij}^{(r)}$
commutes with every coefficient of 
$Z(v)\,$.
\end{proof}

The square $\S^{\ts2}$ of antipodal mapping
is an automorphism of the associative algebra $\YMN\,$.

\begin{Proposition}
\label{ss}
The automorphism\/ $\S^{\ts2}\!$ of $\YMN$ is given by the assignment
\begin{equation*}
T_{ij}(u)\mapsto Z(u)^{-1}\,T_{ij}(u+M-N)\,.
\end{equation*}
\end{Proposition}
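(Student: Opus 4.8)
The plan is to compute $\S^2$ directly from the fact that $\S$ is the antiautomorphism \eqref{S}, i.e.\ $\S:T(u)\mapsto T(u)^{-1}$ in matrix form, combined with the identities \eqref{zetav}--\eqref{zetau} that were just established in Proposition \ref{zu}. The key observation is that those identities, read as matrix equations, say precisely that the matrix $\TS(u)$ (the $\tau$-transpose of $T(u)^{-1}$) is, up to the scalar series $Z(u)$ and a shift of the spectral parameter, the $\tau$-transpose of $T(u)$ itself. Tracking this through the definition of $\S$ will give the claimed formula.

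Concretely, first I would rewrite \eqref{zetav} in matrix form. Using the definitions \eqref{tu}, \eqref{tunat}, \eqref{tsu}, the left-hand side of \eqref{zetav}, summed against the appropriate matrix units, is the product of the matrix $\bigl(T_{kj}(u+M-N)\bigr)$ and the matrix $\bigl(\TP_{ik}(u)\bigr)$; translating the index pattern $\sum_k T_{kj}(u+M-N)\TP_{ik}(u)=\delta_{ij}Z(u)$ into a statement about $T(u)$, $T(u)^{-1}$ and the antiautomorphism $\tau$ of $\End\CMN$ yields an identity of the shape
\begin{equation*}
(\tau\ot\id)\bigl(T(u)\bigr)\cdot(\tau\ot\id)\bigl(T(u)^{-1}\text{-type matrix}\bigr)=1\ot Z(u)
\end{equation*}
after inserting the sign factors $(-1)^{\bj(\bi+1)}$ that relate $T_{ij}$, $\TP_{ij}$ to their $\tau$-transposes. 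The cleanest route is: apply $\tau\ot\id$ to both sides of \eqref{3.3}-derived relations, or simply note that \eqref{zetav} says $\TS(u)\,\bigl((\tau\ot\id)T(u+M-N)\bigr)^{\!?}$ collapses appropriately; in any case one extracts that
\begin{equation*}
\TP_{ji}(u)\,(-1)^{\bj(\bi+1)} \;=\; \text{(entry of a matrix built from }T(u+M-N)^{-1}\text{ and }Z(u)^{-1}\text{)}.
\end{equation*}

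Then I would assemble $\S^2$. By definition $\S(T_{ij}(u))=\TP_{ij}(u)$, so $\S^2(T_{ij}(u))=\S(\TP_{ij}(u))$, and to evaluate the right-hand side I need to know how $\S$ acts on the entries of $T(u)^{-1}$; equivalently, applying $\S$ to the matrix identity $T(u)\,T(u)^{-1}=1$ and using that $\S$ is an antiautomorphism gives $\S(T(u)^{-1})\cdot\S(T(u))=1$ (with appropriate sign/transpose bookkeeping), hence $\S$ sends the matrix $T(u)^{-1}$ essentially to $\bigl(\S(T(u))\bigr)^{-1}=\bigl(T(u)^{-1}\bigr)^{-1}$-flavoured object. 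Feeding in the relation from \eqref{zetav}--\eqref{zetau} that expresses $\TS(u)$ through $T(u+M-N)$ and $Z(u)$, and using that $Z(u)$ is central with leading term $1$ (so $Z(u)^{-1}$ makes sense), the two applications of $\S$ compose to produce exactly $T_{ij}(u)\mapsto Z(u)^{-1}T_{ij}(u+M-N)$. It is worth checking at the end that the shift is $+\,(M-N)$ and not $-(M-N)$: one passes from \eqref{zetav} to \eqref{zetau} (which has the shift on the other factor) and the symmetry of the two equations under the substitution is what fixes the sign, so I would verify consistency against the first-order term using the explicit images of $T_{ij}^{(1)}$ under $\S$ recorded after \eqref{3.7}.

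The main obstacle is the sign and transpose bookkeeping: $\S$ is defined via $T(u)\mapsto T(u)^{-1}$ in matrix form, but the auxiliary matrices $\TS(u)$, $\tau\ot\id(T(u))$ carry the factors $(-1)^{\bj(\bi+1)}$, and $Z(u)$ is a \emph{scalar} series (proportional to the identity matrix), so one must make sure these conventions compose correctly — a single misplaced $\tau$ or sign would flip the shift or insert a spurious $(-1)^{\bi+\bj}$. I would control this by working entirely at the level of the matrix identities \eqref{trater}, \eqref{qresi} and the definition \eqref{tsu}, rather than with individual matrix entries, and only at the very end translate back to the component form $T_{ij}(u)\mapsto Z(u)^{-1}T_{ij}(u+M-N)$; the centrality of $Z(u)$ from Proposition \ref{zu} is what guarantees the answer is a genuine algebra automorphism of the stated simple form.
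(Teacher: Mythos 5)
Your proposal is correct and follows essentially the same route as the paper: there, $\S^2$ is computed by applying $\S$ to the entrywise inversion identity $\sum_k T_{ik}(u)\,\S(T_{kj}(u))\,(-1)^{(\bi+\bk)(\bj+\bk)}=\delta_{ij}$, which yields $\sum_k \S^{2}(T_{kj}(u))\,\TP_{ik}(u)=\delta_{ij}$, and this is then compared with \eqref{zetav} to conclude $Z(u)\,\S^{2}(T_{kj}(u))=T_{kj}(u+M-N)$, using that $Z(u)$ is central with leading term $1$. The only point to watch is that your intermediate ``$\S(T(u)^{-1})\cdot\S(T(u))=1$'' must be read entrywise, with the algebra factors reversed but the index-contraction pattern unchanged — which is precisely why the result is not $(T(u)^{-1})^{-1}=T(u)$ but instead matches the transposed-pattern product in \eqref{zetav} and so acquires the shift $M-N$ and the factor $Z(u)^{-1}$; the bookkeeping you plan to do resolves exactly this.
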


\begin{proof}
By the definition \eqref{tunat} for any indices $i\,,j$ 
we have the identity
\begin{equation}
\label{TTP}
\sum_{k=1}^{M+N}\,T_{ik}(u)\,\TP_{kj}(u)\,
{(-1)}^{\ts(\ts\bi\ts+\ts\bk\ts)(\ts\bj\ts+\ts\bk\ts)}
=\delta_{ij}\,.
\end{equation}
Here we use the convention \eqref{conv1}. By using the definition of $\S$
this identity can be written as 
\begin{equation*}
\sum_{k=1}^{M+N}\,T_{ik}(u)\ts\S(\ts T_{kj}(u))\,
{(-1)}^{\ts(\ts\bi\ts+\ts\bk\ts)(\ts\bj\ts+\ts\bk\ts)}
=\delta_{ij}\,.
\end{equation*}
Let us apply the antiautomorphism $\S$ to both sides of
the latter identity. We get 
\begin{equation*}
\sum_{k=1}^{M+N}\,
\S^{\ts2}(\ts T_{kj}(u))\ts\S(\ts T_{ik}(u))=\delta_{ij}
\quad\text{or}\quad
\sum_{k=1}^{M+N}\,
\S^{\ts2}(\ts T_{kj}(u))\,\TP_{ik}(u)=\delta_{ij}\,.
\end{equation*}
By comparing the last displayed identity with \eqref{zetav} we see that
for any indices $k\,,j$
\begin{equation*}
Z(u)\ts\S^{\ts2}(\ts T_{kj}(u))=T_{kj}(u+M-N)\,.
\tag*{\qed}
\end{equation*}
\renewcommand{\qed}{}
\end{proof}

\begin{Corollary}
\label{DZ}
For the formal power series $Z(u)$ in $u^{-1}$ we have
\begin{equation}
\label{DeZu}
\De:\,Z(u)\mapsto Z(u)\ot Z(u)\,,
\ \quad
\ep:\,Z(u)\mapsto1\,,
\ \quad
\S:\,Z(u)\mapsto Z(u)^{-1}\,.
\end{equation}
\end{Corollary}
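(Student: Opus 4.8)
The plan is to derive each of the three properties in \eqref{DeZu} by combining the characterization of $Z(u)$ from Proposition~\ref{zu} with the explicit action of $\De$, $\ep$, and $\S$ on the generators. The key point is that $Z(u)$ is not given by a closed formula in terms of the $T_{ij}(u)$, but only implicitly through the defining relations \eqref{zetav} or \eqref{zetau}; so in each case I would apply the relevant Hopf-algebra map to one of these identities and recognize the result as again defining $Z$.

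For the comultiplication: apply $\De$ to \eqref{zetav}. Since $\De$ is an algebra homomorphism and $\TP_{ik}(u)$ is, by \eqref{S} being the antipode and the general formula for the antipode on a matrix bialgebra, still expressible through the $T$'s, the cleanest route is to observe that the inverse matrix $T(u)^{-1}$ behaves well under $\De$: from \eqref{3.7} one gets $\De(\TP_{ij}(u))=\sum_k \TP_{kj}(u)\ot\TP_{ik}(u)$ times the appropriate sign (this is the standard fact that $\De$ applied to the inverse matrix gives the opposite-order product of the inverse matrices, which follows by applying $\De$ to \eqref{TTP} and using uniqueness of inverses). Then applying $\De$ to the left side of \eqref{zetav} and reorganizing the double sum, one finds $\sum_{k} T_{kj}(u+M-N)\ot 1\cdot 1\ot\TP_{ik}(u)$-type terms collapsing via \eqref{zetav} itself in each tensor slot to give $\de_{ij}\,Z(u)\ot Z(u)$. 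Comparing with $\De(\de_{ij}Z(u))=\de_{ij}\De(Z(u))$ yields $\De(Z(u))=Z(u)\ot Z(u)$.

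The counit is easiest: apply $\ep$ to \eqref{zetav}. Since $\ep(T_{ij}(u))=\de_{ij}$ and hence $\ep(\TP_{ij}(u))=\de_{ij}$ as well (apply $\ep$ to \eqref{TTP}), the left side becomes $\sum_k \de_{kj}\de_{ik}=\de_{ij}$, so $\de_{ij}\,\ep(Z(u))=\de_{ij}$, giving $\ep(Z(u))=1$. For the antipode, the slickest argument uses Proposition~\ref{ss}: we know $\S=\S$ as an antiautomorphism and $\S^2(T_{ij}(u))=Z(u)^{-1}T_{ij}(u+M-N)$. Applying $\S$ to the identity $Z(u)\,\S^2(T_{kj}(u))=T_{kj}(u+M-N)$ obtained at the end of the proof of Proposition~\ref{ss}, and using that $\S$ reverses products (so $\S(Z(u)\,\S^2(T_{kj}(u)))=\S^3(T_{kj}(u))\,\S(Z(u))$) together with $\S^3=\S\circ\S^2$, one can solve for $\S(Z(u))$; alternatively, and more directly, since $Z(u)$ is central of $\ZZ_2$-degree zero, the standard Hopf-algebra identity $\S(Z)\,Z=\ep(Z)=1$ that follows from the antipode axiom applied to the grouplike-type element and the already-established $\De(Z(u))=Z(u)\ot Z(u)$, $\ep(Z(u))=1$ gives $\S(Z(u))=Z(u)^{-1}$ immediately (a grouplike element has antipode equal to its inverse).

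The main obstacle I anticipate is bookkeeping the Koszul signs when pushing $\De$ through \eqref{zetav}: one must track the factor ${(-1)}^{(\bi+\bk)(\bj+\bk)}$ appearing in \eqref{3.7} and in \eqref{TTP}, verify that $\De$ on the inverse matrix really is the opposite-order product with the matching signs, and check that after the shift $u\mapsto u+M-N$ in the first factor everything still lines up so that the two instances of \eqref{zetav} (one per tensor slot) can be applied. Once the sign conventions are pinned down the computation is mechanical, and the $\ep$ and $\S$ parts are essentially formal consequences of the grouplike behavior under $\De$.
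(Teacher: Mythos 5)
Your proposal is correct, and for the main (comultiplication) assertion it takes a genuinely different route from the paper, while your treatments of $\ep$ and of $\S$ coincide with the paper's. The paper never applies $\De$ to \eqref{zetav} directly: it uses that $\S^{\ts2}$ is a coalgebra homomorphism, so $\De\,\S^{\ts2}=(\S^{\ts2}\ot\S^{\ts2})\,\De$ on $T_{ij}(u)$, substitutes $\S^{\ts2}(T_{ij}(u))=Z(u)^{-1}\ts T_{ij}(u+M-N)$ from Proposition~\ref{ss}, and equates the two images to read off $\De(Z(u)^{-1})=Z(u)^{-1}\ot Z(u)^{-1}$; this avoids the coproduct of the entries of $T(u)^{-1}$ and any Koszul-sign bookkeeping, since the common factor cancels on both sides. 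Your route instead needs the formula $\De(\TP_{ij}(u))=\sum_k\TP_{kj}(u)\ot\TP_{ik}(u)$, and this does hold with no extra sign: applying $\De$ to \eqref{TTP} and using uniqueness of the inverse of a matrix series with leading term $1$ gives it, as you indicate. The sign bookkeeping you flagged also closes up: writing $\De(T_{kj}(u+M-N))$ and $\De(\TP_{ik}(u))$ with summation indices $a$ and $b$, the comultiplication sign combines with the Koszul sign from \eqref{conv1} into $(-1)^{(\bar a+\bj)(\bar a+\bar b)}$, which is independent of the summed index $k$; the first tensor slot then collapses by \eqref{zetav} to $\de_{ab}\,Z(u)$, the surviving sign becomes $+1$ because $b=a$, and the second slot collapses by \eqref{zetav} again, yielding $\de_{ij}\,Z(u)\ot Z(u)$ as you claim. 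Your $\S$ argument (the antipode identity $\mu\,(\S\ot\id)\,\De=\de\,\ep$ applied to the now-grouplike $Z(u)$, giving $\S(Z(u))\,Z(u)=1$) is exactly the paper's. In comparison, the paper's proof is sign-free and recycles Proposition~\ref{ss}, whereas yours is more direct and self-contained, depending only on Proposition~\ref{zu}, the identity \eqref{TTP} and the explicit coproduct \eqref{3.7}, at the cost of verifying the coproduct of $T(u)^{-1}$ and the super signs.
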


\begin{proof}
The square of the antipodal mapping is a coalgebra homomorphism. 
Hence the images of
$T_{ij}(u)$ relative to the two compositions
$\De\ts\S^{\ts2}$ and $(\S^{\ts2}\ot\S^{\ts2})\,\De$
are the same. By Proposition~\ref{ss} these images are respectively equal to
\begin{gather*}
\De\,(\ts Z(u)^{-1}\,T_{ij}(u+M-N))=
\De\,(\ts Z(u)^{-1})\,\times
\\
\sum_{k=1}^{M+N}
T_{ik}(u+M-N)\ot T_{kj}(u+M-N)\,
{(-1)}^{\ts(\ts\bi\ts+\ts\bk\ts)(\ts\bj\ts+\ts\bk\ts)}\,,
\end{gather*}
and
\begin{gather*}
\sum_{k=1}^{M+N}
\S^{\ts2}(\ts T_{ik}(u))\ot\S^{\ts2}(\ts T_{kj}(u))\,
{(-1)}^{\ts(\ts\bi\ts+\ts\bk\ts)(\ts\bj\ts+\ts\bk\ts)}=
(\ts Z(u)^{-1}\ot Z(u)^{-1})\,\times
\\
\sum_{k=1}^{M+N}
T_{ik}(u+M-N)\ot T_{kj}(u+M-N)\,
{(-1)}^{\ts(\ts\bi\ts+\ts\bk\ts)(\ts\bj\ts+\ts\bk\ts)}\,.
\end{gather*}
By equating the two images of $T_{ij}(u)$ we obtain that 
\begin{equation*}
\De:\,Z(u)^{-1}\mapsto Z(u)^{-1}\ot Z(u)^{-1}\,.
\end{equation*}
Since the comultiplication $\De$ is an algebra homomorphism,
we get the first statement in \eqref{DeZu}.

The second statement in \eqref{DeZu} immediately follows from 
\eqref{zetav} and from the definition of the counit homomorphism $\ep\,$.
The third statement follows from the first and the second because the
multiplication $\mu:\YMN\ot\YMN\to\YMN$ and the unit mapping
$\de:\CC\to\YMN$ satisfy the identity
$
\mu\,(\S\ot\id)\,\De=
\de\,\ep\,.
$
Indeed, 
by applying to the coefficients of the series 
$Z(u)$ the homomorphisms at the two
sides of this identity we get the equality $\S(Z(u))\,Z(u)=1\,$.
\end{proof}

In Section \ref{sec:3} we will also use the following observation.
For any indices 
$i\,,j$ by 
\eqref{tunat} we have
\begin{equation*}
\sum_{k=1}^{M+N}\,\TP_{ik}(u)\,T_{kj}(u)\,
{(-1)}^{\ts(\ts\bi\ts+\ts\bk\ts)(\ts\bj\ts+\ts\bk\ts)}
=\delta_{ij}
\end{equation*}
similarly to \eqref{TTP}.
The collection of last displayed identities can be written as a 
single relation 
\begin{equation}
\label{QTT}
(Q\ot1)\,\TS_2(u)\,T_1(u)=Q\ot1
\end{equation}
of series in $u$ with coefficients in the algebra 
$(\End\CMN)^{\ts\ot\ts2}\ot\YMN\,$.

\begin{Proposition}
\label{TZ}
The series $Z(u)$ is invariant under the antiautomorphism \eqref{T} 
of\/ $\YMN\,$.
\end{Proposition}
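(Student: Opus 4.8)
The plan is to apply the antiautomorphism \eqref{T} directly to the defining identity \eqref{zetav} for the series $Z(u)$ and to recognise the outcome as \eqref{zetau}. Write $\theta$ for the antiautomorphism \eqref{T}, so that $\theta(T_{ij}(u))=T_{ji}(u)\,{(-1)}^{\,\bj\ts(\,\bi\ts+\ts1)}$. In the proof of Proposition \ref{mst} it was established that $\theta$ acts on the series $\TP_{ij}(u)$ by the same formula, namely $\theta(\TP_{ij}(u))=\TP_{ji}(u)\,{(-1)}^{\,\bj\ts(\,\bi\ts+\ts1)}$; I will use this together with the fact that all coefficients of the series $T_{ij}(u)$ and $\TP_{ij}(u)$ are $\ZZ_2$-homogeneous of degree $\bi+\bj$.

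First I apply $\theta$ to both sides of \eqref{zetav}. Because $\theta$ is an antiautomorphism, the convention \eqref{bb} turns the left hand side into
\begin{equation*}
\sum_{k=1}^{M+N}\,\theta(\TP_{ik}(u))\,\theta(T_{kj}(u+M-N))\,{(-1)}^{\ts(\ts\bk\ts+\ts\bj\ts)(\ts\bi\ts+\ts\bk\ts)}\,,
\end{equation*}
where the extra sign records the transposition of two homogeneous factors of degrees $\bk+\bj$ and $\bi+\bk$. Substituting the explicit formulas for $\theta$ on the two series, the sign attached to the $k$-th summand becomes ${(-1)}^{\,\bk(\bi+1)\,+\,\bj(\bk+1)\,+\,(\bk+\bj)(\bi+\bk)}$, and a short computation modulo $2$ (using $\bk^2=\bk$) collapses this exponent to $\bj(\bi+1)$. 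Hence applying $\theta$ to \eqref{zetav} yields
\begin{equation*}
\sum_{k=1}^{M+N}\,\TP_{ki}(u)\,T_{jk}(u+M-N)\,{(-1)}^{\,\bj\ts(\,\bi\ts+\ts1)}=\delta_{ij}\,\theta(Z(u))\,.
\end{equation*}

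Taking $i=j$ here, the sign ${(-1)}^{\,\bi(\bi+1)}$ is trivial, so the identity reduces to $\sum_k\TP_{ki}(u)\,T_{ik}(u+M-N)=\theta(Z(u))$. On the other hand \eqref{zetau} with $j=i$ reads $\sum_k\TP_{ki}(u)\,T_{ik}(u+M-N)=Z(u)$. Comparing the two we obtain $\theta(Z(u))=Z(u)$, which is the claim. The only point that needs care is the sign bookkeeping in the first step, but every sign there is forced by \eqref{bb} and by the known action of \eqref{T} on $T_{ij}(u)$ and $\TP_{ij}(u)$, and they combine precisely into the factor ${(-1)}^{\,\bj(\bi+1)}$ required to match the transformed relation with \eqref{zetau}; so there is no substantive obstacle beyond this calculation.
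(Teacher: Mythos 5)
Your proposal is correct and follows essentially the same route as the paper: apply the antiautomorphism \eqref{T} to the identity \eqref{zetav}, using the action of \eqref{T} on $\TP_{ij}(u)$ established in the proof of Proposition \ref{mst}, and check that the signs collapse so the result matches \eqref{zetau}, giving $Z(u)\mapsto Z(u)$. The paper applies \eqref{T} directly to the sum with $i=j$, whereas you transform the relation for general $i,j$ before specializing, which is only a cosmetic difference; the sign bookkeeping is accurate.
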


\begin{proof}
The series $Z(u)$ is equal to the sum at the left hand side of
the relation \eqref{zetav} with $i=j\,$. By applying
the antiautomorphism \eqref{T} to that sum and using Proposition \ref{mst}
we get
\begin{equation*}
\sum_{k=1}^{M+N}
\TP_{ki}(u)\,T_{ik}(u+M-N)\,
{(-1)}^{\,\bk(\,\bi\ts+\ts1)\ts+\,\bi\ts(\,\bk\ts+\ts1)\ts+\ts
(\ts\bk\ts+\ts\bi\ts)(\ts\bi\ts+\ts\bk\ts)}
\end{equation*}
which is equal 
to the sum the left hand side of
the relation \eqref{zetau} with $i=j\,$.
\end{proof}

\begin{Corollary}
\label{STZ}
The automorphism \eqref{tusharp} of\/ $\YMN$ maps $Z(u)\mapsto Z(u)^{-1}\,$.
\end{Corollary}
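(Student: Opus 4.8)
The plan is to factor the automorphism \eqref{tusharp} as a composition of two antiautomorphisms whose action on $Z(u)$ has already been determined. Recall from the proof of Corollary \ref{ts} that \eqref{tusharp} is obtained by first applying the antiautomorphism \eqref{T} to $T_{ij}(u)$ and then applying the antiautomorphism \eqref{S} to the result.

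First I would invoke Proposition \ref{TZ}: the series $Z(u)$ is invariant under \eqref{T}, so the first step of the composition leaves $Z(u)$ unchanged. Then I would apply the second step. By the definition given in Section \ref{sec:1}, the antiautomorphism \eqref{S} is precisely the antipodal mapping $\S$ of $\YMN$, and by the third statement of Corollary \ref{DZ} we have $\S:Z(u)\mapsto Z(u)^{-1}$. Composing the two steps, the automorphism \eqref{tusharp} sends $Z(u)\mapsto Z(u)^{-1}$, which is the assertion.

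There is no real obstacle here; the only points requiring a little care are the order of composition and the identification of \eqref{S} with $\S$, both spelled out explicitly in Sections \ref{sec:1} and \ref{sec:2}. One could equally reverse the order via Proposition \ref{mst}, applying \eqref{S} first — which by Corollary \ref{DZ} sends $Z(u)$ to $Z(u)^{-1}$ — and then \eqref{T}. Since all coefficients of $Z(u)$ have $\ZZ_2$-degree $0$, they generate a commutative subalgebra of $\YMN$ on which the antiautomorphism \eqref{T} restricts to an algebra homomorphism, so it maps $Z(u)^{-1}$ to the inverse of the image of $Z(u)$; and that image is $Z(u)$ again by Proposition \ref{TZ}. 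Either route yields $Z(u)\mapsto Z(u)^{-1}$.
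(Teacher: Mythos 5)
Your proposal is correct and follows essentially the same route as the paper: factor the automorphism \eqref{tusharp} as the composition of the antiautomorphisms \eqref{T} and \eqref{S} (as in the proof of Corollary \ref{ts}), then apply Proposition \ref{TZ} and the third statement of Corollary \ref{DZ}. The extra remarks about reversing the order via Proposition \ref{mst} and about \eqref{S} being the antipode $\S$ are accurate but not needed beyond what the paper already records.
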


\begin{proof}
The automorphism \eqref{tusharp} is the composition of the antiautomorphisms
\eqref{S} and \eqref{T} of $\YMN\,$. Hence the required statement follows 
from Corollary \ref{DZ} and Proposition \ref{TZ}.  
\end{proof}

Due to the definitions \eqref{tunat} and \eqref{zetav}
the coefficient of the series $Z(u)$  at $u^{-1}$ is zero. Thus
\begin{equation}
\label{zud}
Z(u)=1+Z^{\ts(2)}\ts u^{-2}+Z^{\ts(3)}\ts u^{-3}+\ldots
\end{equation}
for certain central elements $Z^{\ts(2)},Z^{\ts(3)},\ldots\in\YMN\,$.
The main result of the present section is

\begin{Theorem}
\label{Z}
The elements $Z^{\ts(2)},Z^{\ts(3)},\ldots$ are free generators
of the centre of\/ $\YMN\,$.
\end{Theorem}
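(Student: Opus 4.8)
The plan is to show that the centre $Z(\YMN)$ is a polynomial algebra on the coefficients $Z^{(2)}, Z^{(3)}, \ldots$ by passing to the associated graded algebra for the first filtration on $\YMN$. By the Corollary just proved, $\gr\YMN$ is a free supercommutative algebra on the images $X_{ij}^{(r)}$ of degree $\bar\imath + \bar\jmath$. First I would locate the symbols of the central elements $Z^{(r)}$ inside this graded algebra. From the defining relation \eqref{zetav} together with the definition \eqref{tunat} one reads off, by inspecting the coefficient of $u^{-r}$ and keeping only the top filtration degree $r$, that the symbol $\gr Z^{(r)} \in \gr\YMN$ equals $\sum_{i=1}^{M+N} X_{ii}^{(r)}$ up to a nonzero scalar (the lower-degree corrections coming from products $T_{kj}^{(a)}T'_{ik}^{(b)}$ with $a+b = r$, $a,b\ge 1$ all have filtration degree $<r$). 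In particular these symbols are part of a free generating set of $\gr\YMN$, hence algebraically independent; this already shows $Z^{(2)}, Z^{(3)}, \ldots$ are free generators of the subalgebra they generate, so the only real content is that they generate \emph{all} of the centre.

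The key step is therefore: $\gr Z(\YMN) \subseteq Z(\gr\YMN)$, and then I must compute the centre of $\gr\YMN$. For the inclusion, if $z \in \YMN$ is central of filtration degree $d$ with nonzero symbol $\gr z$ of degree $d$, then for any generator $T_{ij}^{(r)}$ the supercommutator $[z, T_{ij}^{(r)}] = 0$, and reducing modulo filtration degree $d + r - 1$ gives $[\gr z, X_{ij}^{(r)}] = 0$ in $\gr\YMN$; since the $X_{ij}^{(r)}$ generate $\gr\YMN$, the symbol $\gr z$ is central there. But $\gr\YMN$ is supercommutative, so its centre is the whole even part — this is too weak. The fix is to use a finer invariant: rather than just supercommutativity, extract from \eqref{3.1} the \emph{leading Poisson-type bracket} on $\gr\YMN$. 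Taking the coefficient of $u^{-r}v^{-s}$ in \eqref{3.1} and keeping the top filtration degree $r+s-1$, the supercommutator $[T_{ij}^{(r)}, T_{kl}^{(s)}]$ has symbol (up to sign) $\delta_{kj}X_{il}^{(r+s-1)} - \delta_{il}X_{kj}^{(r+s-1)}$, which is exactly the Kostant--Kirillov bracket making $\gr\YMN$ the coordinate ring of (a shift of) $\glMN[u]^{*}$. So an element $z$ central in $\YMN$ gives $\gr z$ Poisson-central, i.e. a Casimir, and I need: the Poisson centre of $\S(\glMN[u])$ (or of the relevant supercommutative Poisson superalgebra) is freely generated by the symbols $\sum_i X_{ii}^{(r)}$, $r\ge 2$.

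The hard part will be this last assertion about the Poisson centre. The expected route: the Poisson bracket respects the grading by $u$-degree, and the Casimirs are the $\glMN[u]$-invariants of the symmetric algebra. Using the evaluation/loop structure, one reduces to understanding the invariants built from the single ``trace'' direction $f_1 = \sum_i e_{ii}$ singled out in \eqref{esum}: the element $f_1$ is central in $\glMN$ (when $M\ne N$, even when $M=N$ as an element of the \emph{Lie} superalgebra it is central), so the symbols $\sum_i X_{ii}^{(r)}$ are manifestly Poisson-central, and a dimension/weight count against the free generators $X_{ij}^{(r)}$ of $\gr\YMN$ shows there are no others — any further Casimir would produce a nontrivial invariant in the quotient by the $f_1 u^r$ directions, contradicting that the remaining generators span a sum of copies of $\mathfrak{sl}$-type pieces with trivial Poisson centre in the relevant completion. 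Granting this, $\gr Z(\YMN)$ is contained in, hence (by the independence established above) equals, the polynomial algebra on $\gr Z^{(2)}, \gr Z^{(3)}, \ldots$; a standard filtered-to-graded lifting argument then gives $Z(\YMN) = \CC[Z^{(2)}, Z^{(3)}, \ldots]$ with the $Z^{(r)}$ algebraically independent, which is the Theorem. I would expect the write-up to handle the case $M = N$ with a small extra remark, since there the map $\YMN \to \UMN$ and the shift $M - N = 0$ degenerate slightly, but the symbol computation of $\gr Z^{(r)}$ goes through unchanged.
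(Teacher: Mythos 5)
Your strategy (pass to an associated graded algebra, locate the symbols of the $Z^{(r)}$, compute the centre there, lift back) is the same in outline as the paper's, but the execution has a genuine gap right at the first step: the symbol computation is wrong. In the first filtration a product $T^{\ts(a)}_{kj}\TP^{\ts(b)}_{ik}$ with $a+b=r$, $a,b\ge1$ has filtration degree exactly $r$, not less, so these cross terms do \emph{not} drop out. In fact, since the shift $u\mapsto u+M-N$ only produces lower-degree corrections, the degree-$r$ part of the coefficient of $u^{-r}$ in $\sum_k T_{ki}(u+M-N)\,\TP_{ik}(u)$ assembles in $\gr\YMN$ into the coefficient of $u^{-r}$ in the $ii$ entry of $X(u)\ts X(u)^{-1}=1$, which is zero for $r\ge1$. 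So the top-degree terms cancel, $Z^{(r)}$ has strictly smaller filtration degree, and its symbol is \emph{not} $\sum_i X^{\ts(r)}_{ii}$. The correct statement is the paper's Proposition \ref{P28}, proved in the second filtration: $Z^{(r)}$ has degree $r-2$ there and image $(1-r)\sum_i Y^{\ts(r-1)}_{ii}(-1)^{\,\bi}$ --- note also the supertrace signs $(-1)^{\,\bi}$, which are absent from your candidate symbol. Since everything downstream (algebraic independence, identification inside the Poisson centre) hinges on knowing these symbols, this part has to be redone; obtaining it requires the kind of careful cancellation bookkeeping carried out in the proof of Proposition \ref{P28}, not the one-line inspection you describe.

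The second, and mathematically main, step is also not established. You reduce the theorem to the claim that the Poisson centre of the supersymmetric algebra of $\g$ (equivalently, its $\g$-invariants) is the polynomial algebra on the trace-type symbols, and then offer only ``a dimension/weight count against the free generators'' and an appeal to ``$\mathfrak{sl}$-type pieces with trivial Poisson centre''. That is the actual content of the theorem and your sketch is not a proof; note in particular that for $M=N$ the identity matrix is supertraceless, so $\glMN/\CC\cdot\!\sum_i e_{ii}$ is not a direct sum of simple pieces and your reduction needs a genuine argument, including a treatment of the odd generators. The paper supplies exactly this missing ingredient in a different (and cleaner) setting: it uses the second filtration, identifies $\grpr\YMN\cong\Ug$ as Hopf algebras (Theorem \ref{T18}), and then proves Proposition \ref{L27} --- if a finite-dimensional Lie superalgebra $\a$ has trivial centre, then the centre of $\Ua$ is trivial --- applied to $\a=\glMN/\CC\cdot\!\sum_i e_{ii}$, whose centre one checks is trivial. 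An argument of equal substance would be needed for your Poisson/invariant-theoretic version, and the concluding ``standard filtered-to-graded lifting'' also silently uses that the graded centre is exhausted in every degree, which is precisely what this step must deliver.
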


We shall prove Theorem \ref{Z} at the end of this section.
We will use the following proposition.

\begin{Proposition}
\label{P28}
For any $r\geqslant2$ the element 
$Z^{(r)}\in\YMN$ has degree $r-2$
relative to the second filtration on $\YMN\,$. Its image
in the graded algebra $\grpr\YMN$ is equal to
\begin{equation}
\label{zim}
(\ts1-r\ts)\,\sum_{i=1}^{M+N}\,Y^{\ts(r-1)}_{ii}\,(-1)^{\,\bi}\,.
\end{equation}
\end{Proposition}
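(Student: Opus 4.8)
The plan is to extract the degree-$(r-2)$ part of the series identity \eqref{zetav} after passing to the associated graded algebra $\grpr\YMN$, and to match it against the image of $T(u)$ under the isomorphism of Theorem \ref{T18}. Concretely, I would begin by writing out, in $\grpr\YMN$, the images of the two factors $T_{kj}(u+M-N)$ and $\TP_{ik}(u)$ appearing on the left of \eqref{zetav}. Under the second filtration the coefficient $T_{ij}^{\ts(r)}$ has degree $r-1$, so its leading symbol is $Y_{ij}^{\ts(r)}$; shifting $u\mapsto u+M-N$ does not change leading symbols since the shift only produces lower-order corrections. For the inverse series $\TP_{ij}(u)$ the leading symbol is $-Y_{ij}^{\ts(r)}$ for $r\ge1$ (this follows from $T(u)T(u)^{-1}=1$ together with the fact that in the commutative leading-order picture one is just inverting $1+\sum_{r\ge1}Y^{(r)}u^{-r}$ as a matrix power series). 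Thus, at the level of leading symbols, the left side of \eqref{zetav} becomes the $(i,j)$ entry of the product $\bigl(1+\sum_{r\ge1}Y^{(r)}u^{-r}\bigr)\bigl(1+\sum_{r\ge1}Y^{(r)}u^{-r}\bigr)^{-1}=1$ plus the next correction — so the interesting information lives precisely one filtration degree below the naive one, which is exactly why $Z^{(r)}$ has degree $r-2$ and not $r-1$.

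The second step is to make that "next correction" explicit. I would isolate the first nontrivial commutator correction: writing $A(u)=\sum_{r\ge1}Y^{(r)}u^{-r}$, in $\YMN$ we have $T(u)=1+A(u)+(\text{lower filtration degree})$ and $T(u+M-N)^{-1}=1-A(u)+A(u)^2-\ldots$ with the shift contributing $-(M-N)\,\partial_u$-type terms; crucially, $\partial_u u^{-r} = -r\,u^{-r-1}$ raises the $u$-degree by one while keeping the filtration degree, which is how the factor $(1-r)$ in \eqref{zim} arises. Collecting the degree-$(r-2)$ part of the diagonal entry of $\sum_k T_{kj}(u+M-N)\TP_{ik}(u)$, the quadratic terms $A^2$ drop on the diagonal after the commutative reshuffling needed in the graded algebra, the genuinely new surviving contribution is the Taylor shift acting on a single $Y^{(r-1)}$, and one reads off the coefficient $-(r-1)$ together with a sum over $k$ of $Y_{kk}^{(r-1)}$ carrying the parity sign $(-1)^{\bk}$ (the sign matches the one in \eqref{UY}, where $e_{ji}u^r\mapsto -Y_{ij}^{(r+1)}(-1)^{\bj}$, so the diagonal combination $\sum_i Y_{ii}^{(r-1)}(-1)^{\bi}$ corresponds to $-\sum_i e_{ii}u^{r-2}$, i.e.\ to the image of $f_1 u^{r-2}$ from \eqref{esum}). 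This gives exactly \eqref{zim}.

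A cleaner route for the same computation, which I would probably use in the write-up, is to feed everything through Proposition \ref{ss}: since $\S^{\ts2}$ sends $T_{ij}(u)\mapsto Z(u)^{-1}T_{ij}(u+M-N)$, and since $\S$ acts on $\grpr\YMN$ by $Y_{ij}^{(r)}\mapsto -Y_{ij}^{(r)}$ by \eqref{grprH} while the shift $u\mapsto u+M-N$ acts trivially on leading symbols, the induced automorphism of $\grpr\YMN\cong\Ug$ must be the identity on leading symbols — but $\S^{\ts2}$ composed with $Z(u)\mapsto$(its image) has to reproduce this, forcing the leading symbol of $Z(u)^{-1}$ to be trivial and pinning down the next-order term. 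Equivalently, differentiating the defining relation and tracking how the "evaluation-at-$u+M-N$" shift interacts with the diagonal trace $\sum_i e_{ii}$ in $\g[u]$ produces the coefficient $1-r$; the quantity $\sum_{i=1}^{M+N} e_{ii}$ is precisely the distinguished central-type element $f_1$ singled out in \eqref{esum}, which is why only that combination of the $Y^{(r-1)}_{ij}$ appears.

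The main obstacle is bookkeeping the parity signs and the Taylor-shift correction simultaneously: one must verify that all off-diagonal and all quadratic contributions genuinely vanish in the degree-$(r-2)$ component of $\grpr\YMN$ (they do, because $\grpr\YMN$ is the enveloping algebra of $\g$ and the would-be obstructions are commutators, hence of strictly lower filtration degree, or are off-diagonal entries killed by the Kronecker delta $\delta_{ij}$ on the right of \eqref{zetav}), and that the surviving term carries the sign $(-1)^{\bi}$ rather than some other parity factor. Once the vanishing is justified by the filtration degree count, extracting the coefficient $1-r$ is a one-line consequence of $\partial_u u^{-(r-1)}=-(r-1)u^{-r}$, and the claim follows.
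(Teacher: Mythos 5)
There is a genuine gap, and it sits at the heart of your computation. You justify discarding the quadratic contributions by saying that "the would-be obstructions are commutators, hence of strictly lower filtration degree" — but that is a property of the \emph{first} filtration (where $\gr\YMN$ is supercommutative), not of the second one. In $\grpr\YMN\cong\Ug$ a commutator $[\ts T^{(a)}_{ij},T^{(b)}_{kl}\ts]$ has the \emph{same} degree $a+b-2$ as the product, its top part being the linear term $\de_{kj}\ts T^{(a+b-1)}_{il}-\de_{il}\ts T^{(a+b-1)}_{kj}+\ldots$ coming from \eqref{3.1}. So the quadratic terms in the degree-$(r-2)$ component of $Z^{(r)}$ do not drop; on the contrary, they are precisely what produces the answer. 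A concrete sanity check: for $M=N$ there is no shift at all, so your proposed mechanism (the Taylor correction $(M-N)\ts\TD$) contributes nothing, yet \eqref{zim} is a nonzero multiple of $\sum_i Y^{(r-1)}_{ii}(-1)^{\,\bi}$. Moreover \eqref{zetav} defines $Z(u)$ with a single fixed index $j$, so the shift term you isolate can only give multiples of $Y^{(r-1)}_{jj}$ (with coefficient proportional to $M-N$); the full supertrace over all $i$ in \eqref{zim} cannot arise from \eqref{zetav} alone without invoking the commutation relations. Your alternative route through Proposition \ref{ss} has the same problem: the identity of $\S^{\ts2}$ on leading symbols only re-proves that the degree-$(r-1)$ part of $Z^{(r)}$ vanishes; it does not determine the degree-$(r-2)$ part.

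What is missing is an input that encodes the commutators of $T$ with $\TP$ in a usable form. The paper gets it from the exchange relation \eqref{trater}: expanding it on the basis $E_{ij}\ot E_{ij}$ gives \eqref{281}, whose $(u-v-M+N)^{-1}$ terms involve sums over \emph{all} indices $k$; letting $u\to v+M-N$ (the singular parts cancel by \eqref{zetav}--\eqref{zetau}) converts these into derivative terms $\TD_{ik}(v+M-N)$ as in \eqref{282}. Summing over $i$ and doing the filtration bookkeeping, the factor $1-r$ comes from $\TD$ (coefficient $-(r-1)\ts T^{(r-1)}$ at $v^{-r}$), the supertrace comes from the term $\sum_{i,k}\TP_{ki}(v)\,\TD_{ik}(v+M-N)\,(-1)^{\,\bi}$, and the two $(M-N)$-proportional terms cancel, yielding \eqref{zim}. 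Your outline correctly predicts the degree bound $r-2$ and correctly identifies where a factor like $1-r$ can come from, but without an ingredient playing the role of \eqref{trater}/\eqref{281} the identification of the image \eqref{zim} does not go through.
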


\begin{proof}
Let us expand the relation \eqref{trater} of series with
coefficients in $(\End\CMN)^{\ts\ot\ts2}\ot\YMN$ by using
the basis of $\End((\CMN)^{\ts\ot\ts2})$ constituted by the
elements $E_{ij}\ot E_{kl}\,$. 
By taking $(-1)^{\,\bi+\ts\bj}$ times the terms in the expansion 
corresponding to the basis element $E_{ij}\ot E_{ij}$ 
with any given indices $i$ and $j$ we obtain the relation of series
with coefficients in the algebra $\YMN$ 
\begin{align}
\notag
&T_{ij}(u)\,\TP_{ji}(v)+
\sum_{k=1}^{M+N}
T_{kj}(u)\,\TP_{jk}(v)\,(u-v-M+N)\1\,(-1)^{\,\bi}=
\\
\label{281}
&\TP_{ji}(v)\,T_{ij}(u)\,(-1)^{\,\bi+\ts\bj}+
\sum_{k=1}^{M+N}
\TP_{ki}(v)\,T_{ik}(u)\,(u-v-M+N)\1(-1)^{\,\bi}\,.
\end{align}
For any $i$ and $j$ denote by $\TD_{ij}(u)$ 
the formal derivative of the series $T_{ij}(u)$ so that
\begin{equation}
\label{TD}
\TD_{ij}(u)=
-\,T^{\ts(1)}_{ij}u^{-2}
-2\,T^{\ts(2)}_{ij}u^{-3}-\ldots\,.
\end{equation}
By tending in the relation \eqref{281} the parameter $u$ to $v+M-N$ 
we then get
\begin{align}
\notag
&T_{ij}(v+M-N)\,\TP_{ji}(v)+
\sum_{k=1}^{M+N}
\TD_{kj}(v+M-N)\,\TP_{jk}(v)\,(-1)^{\,\bi}=
\\
\label{282}
&\TP_{ji}(v)\,T_{ij}(v+M-N)\,(-1)^{\,\bi+\ts\bj}+
\sum_{k=1}^{M+N}
\TP_{ki}(v)\,\TD_{ik}(v+M-N)\,(-1)^{\,\bi}\,.
\end{align}

Let us now observe that
\begin{equation*}
T_{ij}(v+M-N)=T_{ij}(v)+(M-N)\,\TD_{ij}(v)+O_{ij}(v)
\end{equation*}
where $O_{\ts ij}(v)$
is a certain formal power series in $v\1$ with coefficients in $\YMN$
such that the coefficient at $v^{-r}$ with $r\ge3$
has degree $r-3$ relative to the second fitration.
The coefficient of this series at $v^{-r}$ with any $r\le2$ is zero.
By taking the sum of the relations \eqref{282} over the indices
$i=1\lc M+N$ and then using the definition \eqref{zetav} 
along with this observation we get
\begin{align*}
&Z(v)+(M-N)\sum_{k=1}^{M+N}\TD_{kj}(v+M-N)\,\TP_{jk}(v)=
\sum_{i,k=1}^{M+N}
\TP_{ki}(v)\,\TD_{ik}(v+M-N)\,(-1)^{\,\bi}\,+
\\
&\sum_{i=1}^{M+N}
\TP_{ji}(v)\,
(T_{ij}(v)+(M-N)\,\TD_{ij}(v)+O_{\ts ij}(v))\,(-1)^{\,\bi+\ts\bj}\,.
\end{align*}
By using the definition \eqref{tunat} the latter relation can be rewritten as
\begin{align}
\notag
&Z(v)+(M-N)\sum_{k=1}^{M+N}\TD_{kj}(v+M-N)\,\TP_{jk}(v)=
\sum_{i,k=1}^{M+N}
\TP_{ki}(v)\,\TD_{ik}(v+M-N)\,(-1)^{\,\bi}\,+
\\
\label{283}
&1+\sum_{i=1}^{M+N}
\TP_{ji}(v)\,((M-N)\,\TD_{ij}(v)+O_{\ts ij}(v))\,(-1)^{\,\bi+\ts\bj}\,.
\end{align}

For any indices $i$ and $j$ the leading term of the series $\TP_{ij}(v)$
is $\de_{ij}$ while for every $r\ge1$ the coefficient of this series at 
$v^{-r}$ has degree $r-1$ relative to the second filtration on $\YMN\,$. 
Furthermore for any given $r\ge1$
the coefficients at $v^{-r}$ of the series 
$\TD_{ij}(v)$ and $\TD_{ij}(v+M-N)$ have the same image
in the graded algebra $\grpr\YMN\,$, see \eqref{TD}.
Therefore by 
taking the coefficients at $v^{-r}$ with any $r\ge2$
in the relation \eqref{283}
the image of $Z^{(r)}$ in $\grpr\YMN$ equals
\begin{align*}
&-(M-N)\sum_{k=1}^{M+N}(1-r)\,Y^{(r-1)}_{kj}\,\de_{jk}\,+
\sum_{i,k=1}^{M+N}
\de_{ki}\,(1-r)\,Y^{(r-1)}_{ik}\,(-1)^{\,\bi}\,+
\\
&\sum_{i=1}^{M+N}
\de_{ji}\,(M-N)\,(1-r)\,Y^{(r-1)}_{ij}\,(-1)^{\,\bi+\ts\bj}
=
(\ts1-r\ts)\,\sum_{i=1}^{M+N}\,Y^{\ts(r-1)}_{ii}\,(-1)^{\,\bi}\,.
\tag*{\qed}
\end{align*}
\renewcommand{\qed}{}
\end{proof}

In our proof of Theorem \ref{Z} we will also use the following general 
proposition. 
Let $\a$ be any finite-dimensional Lie superalgebra over $\CC$.
Take the 
polynomial current Lie superalgebra $\a\ts[u]\,$.

\begin{Proposition}
\label{L27}
Suppose the centre of the Lie superalgebra $\a$ is trivial.
Then the centre of the universal enveloping algebra $\Ua$
is also trivial, that is equal to\/ $\CC\,$.
\end{Proposition}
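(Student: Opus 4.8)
The statement to prove is: if the Lie superalgebra $\a$ has trivial centre, then the universal enveloping algebra $\Ua$ of the current algebra $\a[u]$ has trivial centre. The natural approach is to use a filtration/grading argument. The algebra $\a[u]$ carries the obvious grading by powers of $u$: put $\a_r = \a\,u^{\ts r}$ for $r\ge0$, so that $\a[u]=\bigoplus_{r\ge0}\a_r$ as a graded Lie superalgebra, with $[\a_r,\a_s]\subseteq\a_{r+s}$. This induces a grading on $\Ua$ (since the enveloping algebra of a graded Lie superalgebra is graded), and a central element of $\Ua$ can be assumed homogeneous of some degree $d\ge0$ without loss of generality, because the homogeneous components of a central element are again central. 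One then also has the standard PBW filtration on $\Ua$, whose associated graded is the supersymmetric (super)commutative algebra $\S(\a[u])$. The first reduction I would carry out: a central element $z$ of $\Ua$ of PBW-degree $m$ has symbol $\bar z\in\S^m(\a[u])$ which must be invariant under the adjoint action of $\a[u]$ on $\S(\a[u])$, i.e.\ $\bar z$ lies in $\S(\a[u])^{\a[u]}$. So it suffices to show $\S(\a[u])^{\a[u]}=\CC$; that would force $m=0$, hence $z\in\CC$.

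**Key steps.** First I would set up the two structures above and make the reduction to: the only $\a[u]$-invariants in $\S(\a[u])$ are the scalars. Second, I would exploit the homogeneity with respect to the $u$-grading: an invariant can be taken homogeneous of $u$-degree $d$, so it is a (super)polynomial of total $u$-degree $d$ in the generators $f_p\,u^{\ts r}$. Third — and this is where the hypothesis that the centre of $\a$ is zero enters — I would consider the action of the degree-zero piece $\a_0\cong\a$ together with a "shift" or "highest-weight" type argument using the elements $\a_1=\a\,u$. Concretely, pairing with $\a\,u^{\ts d}$ or iterating the action of well-chosen elements of $\a\,u$, one reduces an invariant of $u$-degree $d$ to information about an $\a$-invariant element of $\S(\a)$ that is linear in some direction; triviality of the centre of $\a$ (equivalently: $\a$ has no nonzero element killed by $\mathrm{ad}\,\a$, and the adjoint representation has trivial invariants in degree $1$) kills such a term. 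A clean way to organize this: filter $\S(\a[u])$ by the top power of $u$ appearing; if $f_1\,u^{\ts d}$ (say $f_1=\sum e_{ii}$ is central — but here $\a$ has \emph{trivial} centre, so no such element exists) — in general, bracketing an invariant with $x\,u$ for $x\in\a$ and looking at the component of top $u$-degree shows that the "leading coefficient" of the invariant, viewed as a polynomial in the variables of highest $u$-degree with coefficients in the rest, must be $\a$-invariant in $\S(\a)\otimes\a^{\otimes(\text{top part})}$; an induction on $d$ then reduces $d$ to $0$, and the case $d=0$ is the statement that $\S(\a)^{\a}$ in degrees corresponding to a genuine invariant forces scalars — but actually we only need that no nonconstant homogeneous invariant of $u$-degree $0$ in $\S(\a[u])$ exists that is also $\a u$-invariant, and bracketing with $\a u$ immediately produces something of $u$-degree $1$, giving a contradiction unless the element is a scalar.

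**The main obstacle.** The genuinely delicate point is the inductive step controlling the top $u$-degree: showing that an $\a[u]$-invariant of top $u$-degree $d>0$ cannot exist, using only the triviality of $Z(\a)$. The subtlety is that $\S(\a[u])$ has many invariants under $\a_0$ alone (for instance Casimir-type elements built from $\a$, if $\a$ were reductive), so the argument \emph{must} use the larger algebra $\a u,\a u^2,\ldots$ in an essential way; the clean mechanism is that $\mathrm{ad}(x\,u^{\ts k})$ strictly raises $u$-degree, so the top-$u$-degree part of any invariant is separately annihilated by $\mathrm{ad}(\a u^{\ge1})$ on that top piece, which forces that top piece to be a constant in the highest variables — and here "constant in a variable $f_p\,u^{\ts d}$" combined with $\a$-invariance and $Z(\a)=0$ is exactly what collapses it. I would present the argument as: (i) grade, (ii) reduce to invariants in $\S(\a[u])$, (iii) induct downward on top $u$-degree via $\mathrm{ad}(\a\,u)$, (iv) finish with the $u$-degree-zero case using $Z(\a)=0$. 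The routine verifications — that homogeneous components of central/invariant elements are central/invariant, that $\gr\Ua=\S(\a[u])$ as superalgebras, and the explicit bracket bookkeeping — I would leave to short computations.
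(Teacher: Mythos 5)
Your proposal is correct and takes essentially the same route as the paper: reduce via the Poincar\'e--Birkhoff--Witt theorem to showing that the adjoint invariants of $\a\ts[u]$ in its supersymmetric algebra are scalars, then apply $\operatorname{ad}(f_p\ts u)$ to an invariant and examine the component of top $u$-degree, where triviality of the centre of $\a$ makes the resulting linear system on the coefficients nondegenerate and so kills all dependence on the highest-degree variables. The paper concludes in one stroke by contradicting the maximality of the top degree $d$ (no downward induction or separate $d=0$ case is needed) and carries out explicitly the super-sign bookkeeping you leave as routine.
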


\begin{proof}
Consider adjoint action of the Lie superalgebra $\a\ts[u]$ 
on its supersymmetric algebra. By
Poincar\'e\ts-Birkhoff-Witt theorem for $\Ua$
it suffices to show that the space
of invariants of~this action is trivial,
see \cite[Theorem 5.15]{MM}.

Let $A$ be an element of the supersymmetric algebra of $\a\ts[u]$
invariant under the adjoint action. Let $K=\dim\a\,$.
Choose any $\ZZ_2\ts$-homogeneous basis $f_1,\ldots,f_K$ of $\a$ and 
write the Lie brackets~as
\begin{equation*}
[\,f_p\,,f_q\,]=\sum_{r=1}^K C_{pq}^{\,r}\,f_r
\end{equation*}
where $C_{pq}^{\,r}\in\CC$ are the structure constants. 
Let $d$ be the maximal degree such that $A$ depends on at least one
of the basis 
elements $f_1\ts u^{\ts d}\lc f_K\ts u^{\ts d}$ of $\a\ts[u]\,$. Write
\begin{equation}
\label{A}
A\,=\sum_{m_1,\ldots,m_K}A_{\,m_1\ldots\ts m_K}\,
(\ts f_1\ts u^{\ts d}\ts)^{m_1}\ldots(\ts f_K\ts u^{\ts d}\ts)^{m_K}
\end{equation}
where the coefficients $A_{\,m_1\ldots\ts m_K}$ are certain elements
of the supersymmetric algebra which do~not depend
on $f_1\ts u^{\ts d}\lc f_K\ts u^{\ts d}\,$.
For each $p=1\lc K$ let $\bp$ be the $\ZZ_2\ts$-degree of the basis element 
$f_{\ts p}$ of $\a\,$. We allow $m_1\lc m_K$ in \eqref{A}
to range over $0,1,2,\,\ldots$ but assume that $A_{\,m_1\ldots\ts m_K}=0$
if $m_{\ts p}>1$ for at least one index $p$ with $\bp=1\,$. 

For each $p=1\lc K$ we have the equation 
$\text{ad}\ts(\ts f_p\ts u\ts)(A)=0$ in the supersymmetric algebra
of $\a\ts[u]\,$. In particular,
the component of the left hand side of this equation
that involves any of the basis elements 
$f_1\ts u^{\ts d+1}\lc f_K\ts u^{\ts d+1}$ 
must be zero. Thus
\begin{equation*}
\sum_{m_1,\ldots,m_K}A_{\,m_1\ldots\ts m_K}\,
\sum_{q,r=1}^K\,
m_{\ts q}\,C_{pq}^{\,r}\,
(\ts f_1\ts u^{\ts d}\ts)^{m_1}
\ldots
(\ts f_q\ts u^{\ts d}\ts)^{m_{\ts q}-1}
\ldots
(\ts f_K\ts u^{\ts d}\ts)^{m_K}\,
f_r\ts u^{\ts d+1}\,
(-1)^{\ts h_{\ts q}}=0
\end{equation*}
where we used the notation 
\begin{equation}
\label{hq}
h_{\ts q}=
\sum_{s=1}^{q-1}\,\,\bp\ts\bs\ts m_{\ts s}
\,+\!
\sum_{s=q+1}^K\br\ts\bs\ts m_{\ts s}\,.
\end{equation}
If follows that for any non-negative integers 
$m_1^{\,\prime},\ldots,m_K^{\,\prime}$ we have the equations 
\begin{equation}
\label{ad}
\sum_{q=1}^M\,
A_{\,m_1^{\,\prime}\ldots\ts m^{\,\prime}_q+1\ldots\ts m^{\,\prime}_K}
(m^{\,\prime}_q+1)\,C_{pq}^{\,r}\,(-1)^{\ts h_{\ts q}^{\ts\prime}}=0
\quad\text{where}\quad
p\,,r=1\lc K\,.
\end{equation}
Similarly to \eqref{hq} here we used the notation
\begin{equation*}
h_{\ts q}^{\ts\prime}=
\sum_{s=1}^{q-1}\,\,\bp\ts\bs\ts m_{\ts s}^{\,\prime}
\,+\!
\sum_{s=q+1}^K\br\ts\bs\ts m_{\ts s}^{\,\prime}\,.
\end{equation*}

Let us now fix any non-negative integers
$m_1^{\,\prime},\ldots,m_K^{\,\prime}$ 
and observe that the elements
\begin{equation*}
f_q^{\,\prime}=(m_q^{\,\prime}+1)\,f_q
\,(-1)^{\ts h_{\ts q}^{\ts\prime}}
\quad\text{with}\quad q=1\lc K
\end{equation*}
also form a basis of $\a\,$. Since the centre of 
the Lie superalgebra $\a$ is trivial, the system
\begin{equation*}
[\,f_p\,,\,\sum_{q=1}^K\, w_{\ts q}\,f_q^{\,\prime}\,]=0
\quad\text{with}\quad
p=1\lc K
\end{equation*}
of linear equations on 
$w_{\ts 1}\lc w_{\ts K}\in\CC$
has only zero solution. This system can be written~as
\begin{equation*}
\sum_{q=1}^K\,w_{\ts q}\,
(m^{\,\prime}_q+1)\,C_{pq}^{\,r}\,(-1)^{\ts h_{\ts q}^{\ts\prime}}=0
\quad\text{where}\quad
p\,,r=1\lc K\,.
\end{equation*}
Hence by comparing the latter system with \eqref{ad} we obtain that 
$A_{\,m_1^{\,\prime}\ldots\ts m^{\,\prime}_q+1\ldots\ts m^{\,\prime}_K}=0$
for each index $q=1\lc K\,$. It follows that $A\in\CC$
and that $d=0$ in particular.
\end{proof}

We can now prove Theorem \ref{Z}.
Due to Proposition \ref{zu} the elements 
\begin{equation*}
Z^{\ts(2)},Z^{\ts(3)},\ldots\in\YMN
\end{equation*}
are central. Therefore it suffices to
prove that the images of these elements in the graded algebra
$\grpr\YMN$ are free generators of its centre, see Proposition \ref{P28}.
By Theorem \ref{T18} the graded algebra 
is isomorphic to the universal enveloping algebra 
$\Ug$ via \eqref{UY}. Under this isomorphism the element \eqref{zim} 
of $\grpr\YMN$ with any $r\ge2$ corresponds to the element
\begin{equation*}
(\ts r-1\ts)\,\sum_{i=1}^{M+N}e_{ii}\,u^{\ts r-2}\in\Ug\,.
\end{equation*}
Let us demonstrate that all the latter elements
are free generators of the centre of $\Ug\,$.
They are algebraically independent
by the Poincar\'e\ts-Birkhoff-Witt theorem for 
$\Ug\,$, see \cite[Theorem 5.15]{MM}.
To show that they generate the centre of $\Ug$ consider 
the quotient of $\Ug$ by the ideal 
they generate. This quotient  
is isomorphic to the universal enveloping algebra $\Ua$
where the Lie superalgebra $\a$ is the quotient of 
$\glMN$ by the span of the element \eqref{esum}.
The centre of the Lie superalgebra $\a$ is trivial.
Hence the centre of 
$\Ua$ is also trivial, see Proposition \ref{L27}.
This argument completes our proof of Theorem \ref{Z}.
 

\section{Quantum Berezinian}
\label{sec:3}

The element \eqref{tu} can be also regarded as an $(M+N)\times(M+N)$ matrix
whose $ij$ entry is the series $T_{ij}(u)\,$.  
The \emph{quantum Berezinian} of that matrix is the series $B(u)$ defined
as the product
\begin{align*}
&\sum_{\si\in\Sym_M} \,(-1)^\si\,\ts
T_{\si(1)1}(u+M-N-1)\,T_{\si(2)2}(u+M-N-2)\ldots T_{\si(M)M}(u-N)\ \times
\\
&\sum_{\si\in\Sym_N} \,(-1)^\si\,\ts
\TP_{M+1,M+\si(1)}(u-N)\,
\TP_{M+2,M+\si(2)}(u-N+1)\ldots
\TP_{M+N,M+\si(N)}(u-1)
\end{align*}
of two alternated sums over the symmetric groups $\Sym_M$ and $\Sym_N\,$. 
Here $(-1)^\si$ denotes the sign of the permutation $\si\,$. 
The main purpose of the present section is to prove the following theorem.

\begin{Theorem}
\label{BZ}
We have the equality $B(u+1)=Z(u)\,B(u)$ of formal power series in $u\1\ts$.
\end{Theorem}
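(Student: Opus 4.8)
The plan is to realize both sides of the claimed identity as images, under the defining relations, of a single algebraic manipulation of the $R$-matrix formalism, mimicking the classical computation of the Berezinian. The key technical input is the behaviour of $T(u)$ and $\TP(u)$ on the one-dimensional image subspaces of appropriate symmetrizers and antisymmetrizers in $(\CMN)^{\ot\ts n}$. Concretely, for each $k$ let $A_k\in(\End\CMN)^{\ot\ts k}$ denote the image (inside the group algebra action) of the antisymmetrizer built from $P_{m,m+1}$ on the first $M$ indices, and let $H_k$ denote the corresponding symmetrizer on the last $N$ indices; the factors $T_{\si(1)1}(u+M-N-1)\cdots T_{\si(M)M}(u-N)$ and $\TP_{M+1,M+\si(1)}(u-N)\cdots$ appearing in the definition of $B(u)$ are precisely the matrix coefficients of the ordered products $T_1(u+M-N-1)\cdots T_M(u-N)$ and $\TS_{M+1}(u-N)^{-1}\cdots$ sandwiched between these (anti)symmetrizers, using the fusion procedure for the Yang $R$-matrix. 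So the first step is to express $B(u)$ in the compact form $B(u)\cdot(\text{rank-one projector}) = (\text{projector})\,T_1(u+M-N-1)\cdots T_M(u-N)\,\TS_{M+1}(u-N)\cdots\TS_{M+N}(u-1)\,(\text{projector})$, the rank-one projector being the one attached to the maximal Young symmetrizer of shape $(1^M)$ on the even block times $(N)$ on the odd block.

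Next I would compute $B(u+1)$ in the same way: it is the same expression with all arguments shifted up by $1$, so that the even-block product runs over $T_k(u+M-N-k+1)$ for $k=1\lc M$ and the odd-block product over $\TS_{M+l}(u-N+l)$ for $l=1\lc N$. The point is that the shifted product and the unshifted product overlap in all but the two "boundary" factors: the $T_M(u-N)$ that is present in $B(u)$ but absent (in that form) in $B(u+1)$, and the $T_1(u+M-N)$ that is newly present in $B(u+1)$. I would insert between them, at the junction of the even and odd blocks, the relation \eqref{QTT}, namely $(Q\ot1)\,\TS_2(u)\,T_1(u) = Q\ot 1$, together with \eqref{qresi} and \eqref{zetav} from Proposition \ref{zu}: the factor $Z(u)$ is exactly what is produced when one merges a $T_1(v+M-N)$ with a $\TS_2(v)$ through the $Q$-projector. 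More precisely, \eqref{qresi} says $(Q\ot1)\,T_1(v+M-N)\,\TS_2(v) = \TS_2(v)\,T_1(v+M-N)\,(Q\ot1)$ and each side equals $Q\ot Z(v)$; this is the single "extra" $Z(u)$ that distinguishes $B(u+1)$ from $B(u)$. All the other factors match up after using the $R$-matrix relations \eqref{3.3} and \eqref{3.333} to commute neighbouring $T$'s (resp.\ $\TS$'s) past each other and re-absorb them into the symmetrizers, exactly as in the $N=0$ case treated in \cite[Section 1]{MNO}.

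The main obstacle I expect is bookkeeping: correctly tracking the spectral-parameter shifts and the sign factors $(-1)^{\bi}$ coming from the super structure when one slides $T$-factors through the antisymmetrizer $A_M$, the symmetrizer $H_N$, and the inverse matrices $\TP$. In particular one must verify that the particular shift $M-N$ in the argument of $Z$ — and hence in the definition of $B(u)$ — is precisely the one forced by aligning the last even factor $T_M(u-N)$ with the first odd factor $\TP_{M+1,\,\cdot}(u-N)$ through \eqref{QTT}. A secondary subtlety is that one works with formal power series in $u\1$; after multiplying through by suitable polynomials in $u$ (to clear the $R$-matrix poles and to pass from $\TS$ to honest series) the identity becomes an equality of power series with coefficients in $\YMN$, and the rank-one projector can be stripped off because its image in $(\CMN)^{\ot(M+N)}$ is a nonzero line. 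Once these normalizations are pinned down, the identity $B(u+1)=Z(u)B(u)$ drops out of \eqref{qresi} and Proposition \ref{zu}.
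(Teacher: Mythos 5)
Your outline is in fact the same strategy as the paper's own proof: compress the Berezinian into a fused product of $T$'s and $\TS$'s between the antisymmetrizer $G$ on the even block and the symmetrizer $H$ on the odd block (Proposition \ref{P} is exactly the fusion statement you need), and account for the discrepancy between $B(u+1)$ and $B(u)$ by the two mergers through the rank-one element $Q$, namely \eqref{QTT} (which produces $1$) and \eqref{qresi}/\eqref{zetav} (which produces $Z(u)$), finishing by stripping a rank-one operator and normalising by leading terms. Those are precisely the ingredients the paper uses.

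The gap is in the middle step, and it is not sign-and-shift bookkeeping. The two factors that $B(u+1)$ has and $B(u)$ lacks, $T(u+M-N)$ and $\TS(u)$, are not ``at the junction of the even and odd blocks'': in the fused product they are the extreme left and extreme right factors, and the legs carrying them are caught inside $G$ (over the first $M$ legs) and $H$ (over the last $N$ legs). You cannot simply contract them with a $Q$; and if you did contract legs $1$ and $M+N$ by $Q$, the surviving product would contain only $M-1$ factors $T$ and $N-1$ factors $\TS$, which cannot be matched against the $B(u)$ sandwich without re-expanding the symmetrizers. The paper handles this by working in $(\End\CMN)^{\ot(M+N+2)}$ with two extra auxiliary legs joined by $Q_{\,1,M+N+2}$, dragging $T_1(u+M-N)$ and $\TS_{M+N+2}(u)$ through the whole chain via \eqref{3.3}, \eqref{3.333}, \eqref{trater} (the long identity \eqref{E1}), applying the partial supertrace \eqref{STR}, and then reducing the $(M{+}1)$- and $(N{+}1)$-fold symmetrizers back to the $M$- and $N$-fold ones; this reduction is the content of the technical Propositions \ref{L2} and \ref{L1}, and it hinges on the vanishing of antisymmetric tensors in $(\CC^{M|\ts0})^{\ot(M+1)}$ and $(\CC^{\,0\ts|N})^{\ot(N+1)}$, producing the constants $(M-1)\ts!\,(N-1)\ts!$, $1/M$, $1/N$ and the permutations $P_{\,1,M+1}$, $P_{\,M+2,M+N+2}$. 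None of this has an analogue in the $N=0$ case of \cite{MNO} that you appeal to, since there is no even/odd junction there; it is the super-specific core of the proof. (A smaller point: the pair of $B(u)$ at the junction comes in the order $T(u-N)\,\TS(u-N)$, while \eqref{QTT} needs the order $\TS\ts T$; the swap is legitimate but requires Proposition \ref{L3}, i.e.\ relation \eqref{referee2}.) So your sandwich form of $B(u)$ and your endgame are fine, but without the auxiliary-leg and supertrace construction and the symmetrizer-reduction lemmas the central step does not go through as described.
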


Due to the definition \eqref{3.0} the leading term
of the formal power series $T_{ij}(u)$ in $u\1$ is $\de_{ij}\,$.
Due to the definition 
\eqref{tunat} the leading term
of the series $\TP_{ij}(u)$ is also $\de_{ij}\,$.
It follows that the leading term
of 
$B(u)$ is $1\,$.
Using this observation, the coefficients of
the series $B(u)$ are uniquely determined by those of the series $Z(u)$
via the equality in Theorem \ref{BZ},
see the expansion \eqref{zud}.

Theorems \ref{Z} and \ref{BZ} now imply
that the coefficients of the series $B(u)$ at $u\1,u^{-2},\ts\ldots$
are free generators of the centre of $\YMN\,$. 
Corollary \ref{DZ} and Theorem \ref{BZ} now imply that
\begin{equation*}
\De:\,B(u)\mapsto B(u)\ot B(u)\,,
\ \quad
\ep:\,B(u)\mapsto1\,,
\ \quad
\S:\,B(u)\mapsto B(u)^{-1}\,.
\end{equation*}
Note that 
the second assignment here also follows directly from the definition of $B(u)$
because 
\begin{equation*}
\ep:T_{ij}(u)\mapsto\de_{ij}
\quad\text{and}\quad
\ep:\TP_{ij}(u)\mapsto\de_{ij}\,.
\end{equation*}

For $N=0$ the Hopf algebra $\YMN$ is
the \textit{Yangian} $\operatorname{Y}(\mathfrak{gl}_M)$
of the Lie algebra $\mathfrak{gl}_M\,$,~see~\cite{MNO}. 
In this case the second sum in the above 
definition of $B(u)$ is assumed to be $1\,$, and $B(u)$~equals
\begin{align}
\label{qdet}
&\sum_{\tau\in\Sym_M} \,(-1)^\si\,
T_{\si(1)\ts1}(u+M-1)\,T_{\si(2)\ts2}(u+M-2)\ldots T_{\si(M)\ts M}(u)\,.
\end{align}
This is the \emph{quantum determinant} of the $M\times M$ matrix 
whose $ij$ entry is the series $T_{ij}(u)\,$.
It~has been well known that 
the coefficients of the series \eqref{qdet} 
at $u\1,u^{-2},\ts\ldots$ 
are free generators of the centre of $\operatorname{Y}(\mathfrak{gl}_M)\,$,
see \cite[Section 2]{MNO} and references therein.
A detailed proof of Theorem~\ref{BZ} in this particular case was given in
\cite[Section 5]{MNO} by following~\cite{N1}.

For any $M\,,N$ denote by $X(u)$ the $(M+N)\times(M+N)$ matrix
whose $ij$ entry is the series 
\begin{equation*}
X_{ij}(u)=
\de_{ij}\cdot1+X_{ij}^{\ts(1)}\ts u^{-1}+X_{ij}^{\ts(2)}\ts u^{-2}+\ldots
\end{equation*}
times $(-1)^{\ts\,(\,\bi+1\ts)\,\bj}$
with coefficients of $\ZZ_2\ts$-degree $\bi+\bj\,$. See
Section \ref{sec:1} for the definition of these coefficients.
The matrix $X(u)$ is invertible.
Let $\XP_{ij}(u)$ be the $ij$ entry of the inverse matrix.
Under the correspondence $\YMN\to\gr\YMN$
the series $B(u)$ gets mapped to the product
\begin{align*}
&\sum_{\si\in\Sym_M} \,(-1)^\si\,
X_{\si(1)1}(u)\,X_{\si(2)2}(u)\ldots X_{\si(M)M}(u)\ \times
\\
&\sum_{\si\in\Sym_N} \,(-1)^\si\,
\XP_{M+1,M+\si(1)}(u)\,\XP_{M+2,M+\si(2)}(u)\ldots\XP_{M+N,M+\si(N)}(u)
\end{align*}
of two determinants. This is just the \textit{Berezinian} or the
\emph{superdeterminant}
of the matrix $X(u)$ as defined in \cite[Section I.3.1]{B}.
These two observations explain our choice of 
terminology for 
$B(u)\,$.  

Let us now consider the other particular case when $M=0\,$.
In this case $B(u)$ equals the sum
\begin{align*}
&\sum_{\si\in\Sym_N} \,(-1)^\si\,
\TP_{1\ts\si(1)}(u-N)\,\TP_{2\ts\si(2)}(u-N+1)\ldots\TP_{N\ts\si(N)}(u-1)\,.
\end{align*}
This sum can also be obtained by applying the automorphism 
\eqref{tusharp} of $\operatorname{Y}(\mathfrak{gl}_{\,0\ts|N})$ to the series
\begin{align*}
\label{qdet}
&\sum_{\si\in\Sym_N} \,(-1)^\si\,
T_{\si(1)\ts1}(u-N)\,T_{\si(2)\ts2}(u-N-1)\ldots T_{\si(N)\ts N}(u-1)\,.
\end{align*}
Let us denote by $C(u)$ the latter series. Then due to Corollary \ref{STZ}
the statement of Theorem~\ref{BZ} for $M=0$ becomes equivalent to the relation 
\begin{equation}
\label{AZ}
Z(u)\,C(u+1)=C(u)\,.
\end{equation}

Now observe that the Yangian $\operatorname{Y}(\mathfrak{gl}_{\,0\ts|N})$
is isomorphic to 
$\operatorname{Y}(\mathfrak{gl}_{N|\ts0}\ts)=
\operatorname{Y}(\mathfrak{gl}_N)\,.$
A Hopf algebra isomorphism 
$\operatorname{Y}(\mathfrak{gl}_{\,0\ts|N})\to
\operatorname{Y}(\mathfrak{gl}_{N}\ts)$
can be defined by the assignment $T_{ij}(u)\mapsto T_{ij}(-u)\,$,
see~\eqref{3.1}. Under this isomorphism $Z(u)\mapsto Z(-u)\,$, see 
\eqref{zetav}. Denote by $D(u)$ the quantum determinant for
the Yangian
$\operatorname{Y}(\mathfrak{gl}_{N})\,$. This is the series
\eqref{qdet} with $M$ replaced by $N\ts$. Then $C(u)\mapsto D\ts(1-u)$
under the isomorphism. Therefore by applying the isomorphism to the 
relation \eqref{AZ} we get
\begin{equation*}
Z(-u)\,D(-u)=D\ts(1-u)\,.
\end{equation*}
The latter relation holds by 
Theorem \ref{BZ} for 
$\operatorname{Y}(\mathfrak{gl}_{N}\ts)\,$.
Hence Theorem 3.1 also holds for $M=0\,$.

From now on until the end of this section 
we will be assuming that $M\ts,N>0\,$.
The next proposition goes back  to \cite[Theorem 2.4]{C}, see also 
\cite[Section 1]{G2}.
In particular, this proposition implies 
that the two alternated sums in the definition of 
$B(u)$ commute with each other. 

\begin{Proposition}
\label{L3}
If\/ $i,j\le M<k,l$ then the coefficients of the series
$T_{ij}(u)$ commute with the coefficients of the series $\TP_{kl}(v)\,$.
\end{Proposition}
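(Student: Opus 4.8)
The plan is to derive a ``mixed'' $RTT$-type relation linking the series $T(u)$ with the series $T(v)^{-1}$, and then to isolate a single matrix entry of it. First I would take the defining relation \eqref{3.3} and multiply both of its sides on the left and on the right by $T_2(v)^{-1}\ts$; the two inner factors $T_2(v)$ cancel and this gives
\begin{equation*}
T_2(v)^{-1}\,(R\ts(u-v)\ot1)\,T_1(u)=
T_1(u)\,(R\ts(u-v)\ot1)\,T_2(v)^{-1}\,.
\end{equation*}
Then I would substitute $R(u-v)=1-P\ts(u-v)^{-1}$, multiply through by $u-v$ to clear the denominator, and use that the element $P$ of \eqref{pop} is even and realises the graded transposition of the two copies of $\CMN\ts$, so that $P_{12}\ts T_1(u)=T_2(u)\ts P_{12}$ and $T_1(u)\ts P_{12}=P_{12}\ts T_2(u)$ with no extra signs, together with $P_{12}^{\ts2}=1\ts$. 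After moving $P_{12}$ to the right in each term this turns the displayed equality into
\begin{equation*}
(u-v)\,\bigl(\ts T_2(v)^{-1}T_1(u)-T_1(u)\ts T_2(v)^{-1}\ts\bigr)=
\bigl(\ts T_2(v)^{-1}T_2(u)-T_1(u)\ts T_1(v)^{-1}\ts\bigr)\ts P_{12}\,,
\end{equation*}
an identity of formal power series in $u^{-1}$ and $v^{-1}$ with coefficients in $(\End\CMN)^{\ot\ts2}\ot\YMN\,$.

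Next I would compare the coefficients of $E_{ij}\ot E_{kl}$ in the first two tensor factors of the two sides. On the left hand side, using the convention \eqref{conv1} and the definition \eqref{tunat}, that coefficient is $(u-v)\ts(\ts\TP_{kl}(v)\ts T_{ij}(u)-T_{ij}(u)\ts\TP_{kl}(v)\ts)\ts$; the Koszul sign is trivial here since $\bi=\bj=0\ts$, so it equals $-(u-v)\ts[\ts T_{ij}(u)\ts,\TP_{kl}(v)\ts]\ts$ (and the supercommutator is an ordinary one, both series being even). On the right hand side, $T_2(v)^{-1}T_2(u)$ lies in $1\ot\End\CMN\ot\YMN$ and $T_1(u)\ts T_1(v)^{-1}$ lies in $\End\CMN\ot1\ot\YMN\ts$; after the right multiplication by $P_{12}=P\ot1$, the matrix units that can appear are of the shape occurring in $(1\ot E_{ef})\ts P$ and in $(E_{ef}\ot1)\ts P$ respectively, so the only $E_{ij}\ot E_{kl}$ that can occur are those with $i=l$, from the first summand, or with $j=k$, from the second. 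Since we assume $i,j\le M<k,l\ts$, neither $i=l$ nor $j=k$ is possible, so the coefficient of $E_{ij}\ot E_{kl}$ on the right hand side vanishes. Hence $(u-v)\ts[\ts T_{ij}(u)\ts,\TP_{kl}(v)\ts]=0\ts$, and since multiplication by $u-v$ is injective on formal power series in $u^{-1},v^{-1}$ supported in strictly positive powers, the commutator $[\ts T_{ij}(u)\ts,\TP_{kl}(v)\ts]$ is itself zero, which is the assertion.

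The only delicate point is the sign bookkeeping: checking that $P_{12}$ moves past $T_1(u)$ and $T_2(u)$ without extra signs (which holds because $\deg P=0\ts$), that extracting the $E_{ij}\ot E_{kl}$ entry on the left really produces an honest commutator, and working out the precise forms of $(1\ot E_{ef})\ts P$ and $(E_{ef}\ot1)\ts P$ that determine which matrix units survive on the right. I expect none of this to present a genuine difficulty; the actual content is the simple observation that the two ``diagonal'' coincidences of indices forced by $P_{12}$ on the right hand side are both incompatible with the ranges $i,j\le M<k,l\ts$.
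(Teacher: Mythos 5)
Your proof is correct and follows essentially the same route as the paper: both start by sandwiching the defining relation \eqref{3.3} with $T_2(v)^{-1}$ to obtain a mixed exchange relation between $T(u)$ and $T(v)^{-1}$, and both rest on the observation that the permutation part of the R-matrix cannot contribute in the block $i,j\le M<k,l$. The paper implements this by transposing the second factor with $\tau$ and multiplying the resulting relation \eqref{trater} by the projectors $I\ot J$ (using $(I\ot J)\,Q=Q\,(I\ot J)=0$), whereas you keep $P$ and check entrywise that the surviving matrix units would force $i=l$ or $j=k$; this is only a cosmetic difference.
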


\begin{proof}
Let $I\in\End\CMN$ and $J\in\End\CMN$ 
be the projections of the $\ZZ_2$-graded vector space $\CMN$ onto 
its even and odd subspaces respectively, so that 
$I\ts e_i=\de_{\ts\ts0\ts\ts\bi}\,e_i$ and 
$J\ts e_i=\de_{\ts\ts1\ts\ts\bi}\,e_i$
for every index $i=1\lc M+N\ts$. 
These two subspaces of $\CMN$ are denoted by 
$\CC^{M|\ts0}$ and $\CC^{\,0\ts|N}$.
By the definition \eqref{Q} we have the relations in 
$(\End\CMN)^{\ot2}$
\begin{equation}
\label{IJQ}
(I\ot J)\,Q=
Q\,(I\ot J)=0\,.
\end{equation}
Hence by multiplying \eqref{trater} 
on the left and on the right by $I\ot J\ot1$ we get 
the relation 
\begin{equation}
\label{referee2}
(I\ot J\ot1)\,T_1(u)\,\TS_2(v)\,(I\ot J\ot1)=
(I\ot J\ot1)\,\TS_2(v)\,T_1(u)\,(I\ot J\ot1)
\end{equation}
of series in $u\,,v$ with coefficients 
in the algebra $(\End\CMN)^{\ot2}\ot\YMN\,$,
see \eqref{rutinv}. 
The latter relation is equivalent to the collection of all
commutation relations stated in Proposition~\ref{L3}.
\end{proof}

We will keep using the projectors $I$ and $J$ introduced in
the proof of Proposition \ref{L3} above.
Note that they also satisfy the relations in the algebra $(\End\CMN)^{\ot2}$
\begin{align*}
(J\ot I)\,Q&=Q\,(J\ot I)=0\,,
\\[4pt]
Q\,(I\ot I)&=Q\,(I\ot 1)=Q\,(1\ot I)
\quad\text{and}\quad
Q\,(J\ot J)=Q\,(J\ot 1)=Q\,(1\ot J)\,.
\end{align*}
These relations together with \eqref{IJQ} imply that  
\begin{equation}
Q=Q\,(I\ot I+I\ot J+J\ot I+J\ot J)=Q\,(I\ot I+J\ot J)\,=Q\,(I\ot 1+1\ot J)\,. 
\label{IJ}
\end{equation}  
The next two technical propositions
will be employed in our proof of Theorem \ref{BZ} for $M\ts,N>0\,$. 

\begin{Proposition}
\label{L2}
We have the equality in the algebra $(\End\CMN)^{\ot\ts(M+N+2)}$ 
\begin{align*}
&Q_{\,1,M+N+2}\,
\Bigl(\ts1-\frac1M\,Q_{\,M+1,M+N+2}\ts\Bigr)\,
\Bigl(\ts1+\frac1N\,Q_{\,1,M+2}\ts\Bigr)\,\times
\\[8pt]
&I_{\ts1}\ts\ldots\ts I_{\ts M}\,(\,I_{\ts M+1}+J_{\ts M+2}\,)\,
J_{\ts M+3}\ts\ldots\ts J_{\ts M+N+2}\,=
\\[6pt]
&I_{\ts2}\ts\ldots\ts I_{\ts M+1}\,
J_{\ts M+2}\ts\ldots\ts J_{\ts M+N+1}\,Q_{\,1,M+N+2}\,
\Bigl(-\,\frac1M\,P_{\,1,M+1}\,J_{\ts M+N+2}+
\frac1N\,P_{\,M+2,M+N+2}\,I_{\ts1}\ts\Bigr)\ts.
\end{align*}
\end{Proposition}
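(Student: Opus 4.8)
The statement is an identity of operators on $(\CMN)^{\ts\ot\ts(M+N+2)}$, so the cleanest route is to test both sides on a spanning set of vectors and check equality componentwise. The presence of the projectors $I_{\ts1}\ldots I_{\ts M}$ and $J_{\ts M+3}\ldots J_{\ts M+N+2}$ on the left (and of $I_{\ts2}\ldots I_{\ts M+1}$, $J_{\ts M+2}\ldots J_{\ts M+N+1}$ on the right) drastically cuts down what one needs to test: on the left the factors $2,\ldots,M$ must carry even basis vectors and the factors $M+3,\ldots,M+N+2$ must carry odd ones, and similarly on the right. So I would parametrise a test vector as $e_{a}\ot v_{2}\ot\cdots$, keeping track only of which of the ``movable'' tensor slots ($1$, $M+1$, $M+2$, $M+N+2$) carry even versus odd vectors, and use the explicit actions $Q\colon e_i\ot e_j\mapsto\sum_k e_k\ot e_k\,\de_{ij}(-1)^{\,\bi}$-type formula from \eqref{Q}, together with $P\colon e_i\ot e_j\mapsto e_j\ot e_i(-1)^{\bi\bj}$, $Q^2=(M-N)Q$, and the relations \eqref{IJQ}, \eqref{IJ} collected just above. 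The factor $I_{\ts M+1}+J_{\ts M+2}$ on the left is the source of a case split: the ``$I_{\ts M+1}$'' branch forces slot $M+2$ to be odd (because $Q_{\,1,M+N+2}$ contracts slots $1$ and $M+N+2$, and compatibility with the surrounding projectors then pins down the parities), while the ``$J_{\ts M+2}$'' branch forces slot $M+1$ even; these two branches should match the two summands $-\frac1M P_{\,1,M+1}J_{\ts M+N+2}$ and $\frac1N P_{\,M+2,M+N+2}I_{\ts1}$ on the right respectively.

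**Key steps, in order.** First, record the elementary facts: the action of $Q$ and $P$ on basis tensors, the identities $Q^2=(M-N)Q$, $QP=PQ=Q$ in the relevant slot, $Q(I\ot 1)=Q(1\ot J)$ etc., and note that $Q_{\,M+1,M+N+2}$ acting after the projectors $I_{\ts M+1}$ or $J_{\ts M+1}$ behaves like $M$ or $-N$ times a rank-one map on the appropriate subspace — this is exactly what makes the coefficients $\frac1M$ and $\frac1N$ appear. Second, push the three $Q$-factors on the left through using these rules: $Q_{\,1,M+2}$ hits slots $1,M+2$; $Q_{\,M+1,M+N+2}$ hits slots $M+1,M+N+2$; $Q_{\,1,M+N+2}$ hits slots $1,M+N+2$. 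Third, handle the two branches of $I_{\ts M+1}+J_{\ts M+2}$ separately, in each branch simplifying $\bigl(1\mp\frac1{M\text{ or }N}Q\bigr)$ against the projector it meets (using $Q^2=(M-N)Q$ this collapses to a scalar multiple of a single term), and recombine. Fourth, on the right-hand side expand the two summands similarly, using $Q_{\,1,M+N+2}P_{\,1,M+1}=Q_{\,M+1,M+N+2}$-type moves (a $Q$ adjacent to a $P$ swapping one of its legs turns into another $Q$) to bring everything to a common normal form, e.g. everything written in terms of $Q_{\,1,M+N+2}$ times projectors. Fifth, compare coefficients.

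**Main obstacle.** The genuine difficulty is bookkeeping of the sign $(-1)^{\bi}$ (and $(-1)^{\bi\bj}$ from $P$) through the reorderings, since $Q$ and $P$ act on non-adjacent tensor slots and moving them past the intervening factors produces Koszul signs governed by the $\ZZ_2$-degrees dictated by the projectors. The $I$/$J$ projectors actually help here because they fix those degrees, but one must be careful that the ``$I_{\ts M+1}+J_{\ts M+2}$'' factor is not itself homogeneous, so the two summands live in sectors with different parity patterns and the signs differ between them — this is precisely why one summand on the right gets a $-\frac1M$ and the other a $+\frac1N$. A secondary subtlety: after contracting with $Q_{\,1,M+2}$ and $Q_{\,1,M+N+2}$, slot $1$ gets acted on twice, so one should order the contractions to use $Q^2=(M-N)Q$ rather than nested sums; choosing the order $Q_{\,1,M+N+2}$ last, as written, makes this automatic. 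Once the sign conventions are fixed once and for all at the start, the rest is a finite, if tedious, verification, and I would present it branch by branch.
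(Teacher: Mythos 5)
Your overall strategy---testing the identity on basis vectors, with the projectors pinning down the parities of all but a few slots---could in principle be pushed through, since this is an identity of operators on a finite tensor power of $\CMN$. But several of the ``elementary facts'' you plan to route the computation through are false in the super setting, and they sit exactly where the work has to happen. First, $Q\ts P=P\ts Q=Q$ fails: $Q\ts(e_k\ot e_k)=(-1)^{\bk}\sum_i e_i\ot e_i$ while $Q\ts P\ts(e_k\ot e_k)=\sum_i e_i\ot e_i$, so the two differ by a sign on the odd part. Second, the ``move'' $Q_{\,1,M+N+2}\,P_{\,1,M+1}=Q_{\,M+1,M+N+2}$ is false even for $N=0$: the left side contracts slots $1$ and $M+N+2$ and parks the spectator vector in slot $M+1$, while the right side contracts slots $M+1$ and $M+N+2$. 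The identities that do hold, and on which the argument actually runs, keep a $Q$ on the left: applying $\tau$ to the third tensor factor in $P_{23}P_{13}=P_{13}P_{12}$, and $\tau^{-1}$ to the first factor in $P_{12}P_{13}=P_{13}P_{23}$, gives $Q_{13}\ts Q_{23}=Q_{13}\ts P_{12}$ and $Q_{13}\ts Q_{12}=Q_{13}\ts P_{23}$, i.e.\ \eqref{QQP} and \eqref{QQQ} in the relevant slots. Third, $Q^2=(M-N)\,Q$ is never applicable here: the three $Q$-factors pairwise share at most one leg, so no product of them collapses by that relation; in particular your plan to handle ``slot $1$ acted on twice'' via $Q^2=(M-N)\,Q$ would go wrong---what is needed there is precisely $Q_{\,1,M+N+2}\,Q_{\,1,M+2}=Q_{\,1,M+N+2}\,P_{\,M+2,M+N+2}$.

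Your branch bookkeeping is also reversed, and you do not account for the terms that must vanish. Expanding the two parentheses gives four terms; the unit term and the cross term die in both branches by $Q\,(I\ot J)=(I\ot J)\,Q=0$ from \eqref{IJQ} (for instance $Q_{\,1,M+N+2}\,I_{\ts1}\,J_{\ts M+N+2}=0$ kills the unit term). Of the two survivors, it is the $J_{\ts M+2}$ branch (slot $M+1$ unconstrained) that survives against $-\frac1M\,Q_{\,M+1,M+N+2}$ and, after \eqref{QQP} and $P_{\,1,M+1}\,I_{\ts1}=I_{\ts M+1}\,P_{\,1,M+1}$, yields the summand $-\frac1M\,P_{\,1,M+1}\,J_{\ts M+N+2}$; the $I_{\ts M+1}$ branch survives against $+\frac1N\,Q_{\,1,M+2}$ and, after \eqref{QQQ} and $P_{\,M+2,M+N+2}\,J_{\ts M+N+2}=J_{\ts M+2}\,P_{\,M+2,M+N+2}$, yields $+\frac1N\,P_{\,M+2,M+N+2}\,I_{\ts1}$---the opposite of the pairing you state. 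Likewise, the coefficients $\frac1M$, $\frac1N$ are not produced by any contraction of the form $Q\ts I\ts Q=M\ts Q$ or $Q\ts J\ts Q=-N\ts Q$ inside this proposition; they are simply carried through to the right-hand side. With the correct absorption identities in place the whole proof is a few lines of projector algebra, and the Koszul sign-chasing on basis vectors that you anticipate as the main obstacle is not needed at all.
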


\begin{proof}
The second relation in \eqref{IJQ} implies that
\begin{align*}
&Q_{\,1,M+N+2}\,I_{\ts1}\,J_{\ts M+N+2}=0\,,
\\[8pt]
&Q_{\,M+1,M+N+2}\,I_{\ts M+1}J_{\ts M+N+2}=0\,,
\\[8pt]
&Q_{\,1,M+2}\,I_{\ts1}\,J_{\ts M+2}=0\,.
\end{align*}
Therefore by opening the parentheses at the left hand side of 
required equality we get the sum
\begin{align}
\notag
-\,\frac1M\,&
Q_{\,1,M+N+2}\,Q_{\,M+1,M+N+2}\,
I_{\ts1}\ts\ldots\ts I_{\ts M}\,
J_{\ts M+2}\,
J_{\ts M+3}\ts\ldots\ts J_{\ts M+N+2}
\\[8pt]
\label{referee1}
+\,\frac1N\,&
Q_{\,1,M+N+2}\,Q_{\,1,M+2}\,
I_{\ts1}\ts\ldots\ts I_{\ts M}\,
I_{\ts M+1}\,
J_{\ts M+3}\ts\ldots\ts J_{\ts M+N+2}\,\ts.
\end{align}

We have the relation
$P_{\ts 23}\,P_{\ts 13}=P_{\ts 13}\,P_{\ts 12}$ in 
$(\End\CMN)^{\ot3}\,$.
By applying to this relation the antiautomorphism $\tau$ 
of the third tensor factor $\End\CMN$ we get the relation 
$Q_{\ts 13}\,Q_{\ts 23}=Q_{\ts 13}\,P_{\ts 12}\,$.
It follows that in the algebra $(\End\CMN)^{\ot\ts(M+N+2)}$ 
\begin{equation}
\label{QQP}
Q_{\,1,M+N+2}\,Q_{\,M+1,M+N+2}=Q_{\,1,M+N+2}\,P_{\,1,M+1}\,.
\end{equation}
Further, since $(\tau\ot\tau)(P)=P$ we have the equality
\begin{equation*}
Q=(\ts\tau\1\ot\id\ts)\ts(\ts P\ts)
\end{equation*}
by the definition \eqref{Q}.
Hence applying to the relation $P_{\ts 12}\,P_{\ts 13}=P_{\ts 13}\,P_{\ts 23}$
the antiautomorphism $\tau\1$ of the first tensor factor of 
$(\End\CMN)^{\ot3}$ yields the relation
$Q_{\ts 13}\,Q_{\ts 12}=Q_{\ts 13}\,P_{\ts 23}\,$.
It follows that in the algebra $(\End\CMN)^{\ot\ts(M+N+2)}$ 
\begin{equation}
\label{QQQ}
Q_{\,1,M+N+2}\,Q_{\,1,M+2}=Q_{\,1,M+N+2}\,P_{\,M+2,M+N+2}\,.
\end{equation}
Therefore the sum displayed in the two lines of \eqref{referee1} equals
\begin{align*}
-\,\frac1M\,&
Q_{\,1,M+N+2}\,P_{\,1,M+1}\,
I_{\ts1}\ts\ldots\ts I_{\ts M}\,
J_{\ts M+2}\,
J_{\ts M+3}\ts\ldots\ts J_{\ts M+N+2}\,
\\[8pt]
+\,\frac1N\,&
Q_{\,1,M+N+2}\,P_{\,M+2,M+N+2}\,
I_{\ts1}\ts\ldots\ts I_{\ts M}\,
I_{\ts M+1}\,
J_{\ts M+3}\ts\ldots\ts J_{\ts M+N+2}\,\ts.
\end{align*}
This can be written as the right hand side of the equality in
Proposition \ref{L2} by using the relations
\begin{align*}
&P_{\,1,M+1}\,I_{\ts 1}=I_{\ts M+1}\,P_{\,1,M+1}\,,
\\[8pt]
&P_{\,M+2,M+N+2}\,J_{\ts M+N+2}=J_{\ts M+2}\,P_{\,M+2,M+N+2}\,.
\tag*{\qed}
\end{align*}
\renewcommand{\qed}{}
\end{proof}

For any positive integer $n$ denote respectively by 
$G^{\ts(n)}$ and $H^{(n)}$ the images of the elements 
\begin{equation*}
\sum_{\si\in\Sym_n}(-1)^\si\,\si
\quad\text{and}\quad
\sum_{\si\in\Sym_n}\si
\end{equation*}
of the group ring $\CC\ts\Sym_n$
in the algebra 
$(\End\CMN)^{\ot\ts n}\,$. 
We use the representation $\si_m\mapsto P_{m,m+1}$ 
for $m=1\lc n-1$ introduced in Section \ref{sec:1}. 
Note the relations in the algebra 
$(\End\CMN)^{\ot\ts n}$ 
\begin{align}
\label{GG}
G^{\ts(n)}&\!=
(1-P_{\,1,n}-\ldots-P_{\,n-1,n})\,(G^{\ts(n-1)}\ot1)\,,
\\[8pt]
\label{HH}
H^{(n)}&\!=
(1+P_{\,1,n}+\ldots+P_{\,n-1,n})\,(H^{(n-1)}\ot1)\,.
\end{align}
Here we assume that $G^{\ts(0)}=H^{(0)}=1\,$.
To avoid cumbrous notation, below we will simply write
$G$ for $G^{\ts(M)}$ and $H$ for $H^{(N)}\ts$.
These two images act by antisymmetrization on the subspaces 
\begin{equation}
\label{SS}
(\CC^{M|\ts0})^{\ot\ts M}\subset(\CMN)^{\ot\ts M}
\quad\text{and}\quad
(\CC^{\,0\ts|N})^{\ot\ts N}\subset(\CMN)^{\ot\ts N}\,.
\end{equation}

\begin{Proposition}
\label{L1}
We have the equality in the algebra 
$(\End\CMN)^{\ot\ts(M+N+2)}$ 
\begin{align*}
&I_{\ts2}\ts\ldots\ts I_{\ts M+1}\,
J_{\ts M+2}\ts\ldots\ts J_{\ts M+N+1}\,
G_{\,2\ts\ldots\ts M+1}\,H_{\ts M+2\ts\ldots\ts M+N+1}\,
Q_{\,1,M+N+2}\ \times
\\[8pt]
&\Bigl(\ts1-\frac1M\,Q_{\,M+1,M+N+2}\ts\Bigr)\,
\Bigl(\ts1+\frac1N\,Q_{\,1,M+2}\ts\Bigr)\,
G_{\,1\ts\ldots\ts M}\,H_{\ts M+3\ts\ldots\ts M+N+2}\,=
\\[8pt]
&(M-1)\ts!\,(N-1)\ts!\,
P_{\,1,M+1}\,P_{\,M+2,M+N+2}\ \times
\\[12pt]
&I_{\ts1}\ts\ldots\ts I_{\ts M}\,
J_{\ts M+3}\ts\ldots\ts J_{\ts M+N+2}\,
G_{\,1\ts\ldots\ts M}\,H_{\ts M+3\ts\ldots\ts M+N+2}\,\ts
Q_{\,M+1,M+2}\,\ts.
\end{align*}
\end{Proposition}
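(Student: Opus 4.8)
The plan is to reduce both sides to a common expression by exploiting the relations \eqref{QQP} and \eqref{QQQ} together with the behaviour of the symmetrizers $G=G^{\ts(M)}$ and $H=H^{(N)}$ on the subspaces \eqref{SS}. First I would isolate the central factor
$$
\Bigl(\ts1-\tfrac1M\,Q_{\,M+1,M+N+2}\ts\Bigr)\,
\Bigl(\ts1+\tfrac1N\,Q_{\,1,M+2}\ts\Bigr)
$$
and apply Proposition \ref{L2} directly, after checking that the products of projectors appearing on the left hand side of the statement to be proved agree with those on the left hand side of Proposition \ref{L2} once we take into account the idempotency $I_{\ts p}^{\ts2}=I_{\ts p}$, $J_{\ts p}^{\ts2}=J_{\ts p}$ and the fact that $G_{\,1\ldots M}$ already forces its tensor legs into $\CC^{M|\ts0}$ while $H_{\ts M+3\ldots M+N+2}$ forces its legs into $\CC^{\,0\ts|N}$. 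This rewrites the left hand side as
$$
I_{\ts2}\ldots I_{\ts M+1}\,J_{\ts M+2}\ldots J_{\ts M+N+1}\,
G_{\,2\ldots M+1}\,H_{\ts M+2\ldots M+N+1}\,
Q_{\,1,M+N+2}\,
\Bigl(-\tfrac1M\,P_{\,1,M+1}\,J_{\ts M+N+2}+\tfrac1N\,P_{\,M+2,M+N+2}\,I_{\ts1}\Bigr)\,
G_{\,1\ldots M}\,H_{\ts M+3\ldots M+N+2}\,.
$$

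Next I would treat the two terms separately. Consider the term with $P_{\,1,M+1}$. Using $P_{\,1,M+1}\,G_{\,1\ldots M}=P_{\,1,M+1}\,G_{\,1\ldots M}$ and the standard identity $P_{\,m,m+1}\,G^{\ts(m+1)}=-\,G^{\ts(m+1)}$ combined with \eqref{GG}, one moves $P_{\,1,M+1}$ past $G_{\,1\ldots M}$ and absorbs it, producing a factor $G_{\,2\ldots M+1}$ and a combinatorial constant; the point is that $P_{\,1,M+1}\,G_{\,2\ldots M+1}\,G_{\,1\ldots M}$, after antisymmetrization on overlapping index sets $\{1\lc M\}$ and $\{2\lc M+1\}$ which share $M-1$ legs, collapses to $(M-1)!$ times a single product of symmetrizers times a residual transposition. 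Concretely, $G^{\ts(M)}$ restricted to the $M$-dimensional space $\CC^{M|\ts0}$ is $(M-1)!$ times a rank-one projector composed with a permutation, so the two copies of $G$ on overlapping legs multiply to $(M-1)!\,G$ times the transposition linking the two non-shared legs $1$ and $M+1$, i.e.\ times $P_{\,1,M+1}$. The symmetric computation applies verbatim to the $H$-term with $N$, $P_{\,M+2,M+N+2}$, and the subspace $\CC^{\,0\ts|N}$, yielding the factor $(N-1)!$ and the transposition $P_{\,M+2,M+N+2}$.

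After these two collapses, both terms in the bracket feed into the \emph{same} product $(M-1)!\,(N-1)!\,I_{\ts1}\ldots I_{\ts M}\,J_{\ts M+3}\ldots J_{\ts M+N+2}\,G_{\,1\ldots M}\,H_{\ts M+3\ldots M+N+2}$ times $Q_{\,1,M+N+2}$ times a transposition on the remaining legs, so the two terms can be recombined; what remains is to identify the sum $-\tfrac1M\,P_{\,1,M+1}\,J_{\ts M+N+2}+\tfrac1N\,P_{\,M+2,M+N+2}\,I_{\ts1}$ as being proportional, after multiplication by $Q_{\,1,M+N+2}$, to $P_{\,1,M+1}\,P_{\,M+2,M+N+2}\,Q_{\,M+1,M+2}$. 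This last identification is essentially the content of \eqref{QQP} and \eqref{QQQ} read backwards: $Q_{\,1,M+N+2}\,Q_{\,M+1,M+N+2}=Q_{\,1,M+N+2}\,P_{\,1,M+1}$ and its mirror image. I expect the main obstacle to be bookkeeping of signs and of the precise combinatorial constants in the overlap-of-symmetrizers step — one must verify that the two copies of $G$ (respectively $H$) on index sets overlapping in $M-1$ (respectively $N-1$) legs really do produce exactly $(M-1)!$ (respectively $(N-1)!$) and the correct residual transposition, rather than an extra sign or a wrong factorial. Once that combinatorial lemma about $G^{\ts(M)}$ on $(\CC^{M|\ts0})^{\ot M}$ is pinned down (it follows from $G^{\ts(M)}$ acting as a multiple of the full alternating projector on the $M$-dimensional even subspace), the rest is a direct matching of the two sides.
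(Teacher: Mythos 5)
Your opening move is not valid: Proposition \ref{L2} cannot be ``applied directly'' to the left hand side of Proposition \ref{L1}, because the projector structure does not match. In \ref{L2} the three cancellations that remove the term $Q_{\,1,M+N+2}$ itself (and reduce the bracket to the two $P$-terms) come from the factors $I_{\ts1}$, $J_{\ts M+N+2}$ and $(\ts I_{\ts M+1}+J_{\ts M+2}\ts)$ standing immediately to the right of the $Q$'s, via $Q_{\,1,M+N+2}\,I_{\ts1}\,J_{\ts M+N+2}=0$, etc. In \ref{L1} no such factors are present: the only projectors are $I_{\ts2}\ldots I_{\ts M+1}\,J_{\ts M+2}\ldots J_{\ts M+N+1}$ on the \emph{left}, on different legs, and your claim that $G_{\,1\ldots M}$ ``forces its legs into $\CC^{M|\ts0}$'' is false --- the super-antisymmetrizer does not annihilate tensors with odd entries (already $G^{\ts(2)}(e_{M+1}\ot e_{M+1})=2\,e_{M+1}\ot e_{M+1}\neq0$), so it is not a substitute for $I_{\ts1}\ldots I_{\ts M}$. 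Consequently your intermediate expression silently discards the ``$1$''-term $\,I_{\ts2}\ldots I_{\ts M+1}J_{\ts M+2}\ldots J_{\ts M+N+1}\,G_{\,2\ldots M+1}H_{\ts M+2\ldots M+N+1}\,Q_{\,1,M+N+2}\,G_{\,1\ldots M}H_{\ts M+3\ldots M+N+2}$, which is not zero (for $M=N=1$ it is $I_{\ts2}J_{\ts3}\,Q_{\,14}$). In the paper this term is eliminated only much later: the three surviving terms are recombined into $G^{\,(M+1)}_{\,1\ldots M+1}H^{\ts(N+1)}_{\ts M+2\ldots M+N+2}+P_{\,1,M+1}P_{\,M+2,M+N+2}\,G_{\,1\ldots M}H_{\ts M+3\ldots M+N+2}$, the factor $Q_{\,1,M+N+2}=Q_{\,1,M+N+2}(I_{\ts1}+J_{\ts M+N+2})$ from \eqref{IJ} is inserted, and the first summand dies because there are no nonzero antisymmetric tensors in $(\CC^{M|\ts0})^{\ot(M+1)}$ or $(\CC^{\,0\ts|N})^{\ot(N+1)}$. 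That mechanism has no counterpart in your sketch.

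The second gap is the overlapping-symmetrizer step. The correct identity, obtained from \eqref{GG}, is $G_{\,2\ldots M+1}\,G_{\,1\ldots M}=(M-1)\ts!\,(\,1-P_{\,2,M+1}-\ldots-P_{\,M,M+1}\,)\,G_{\,1\ldots M}$ --- a sum of $M$ terms, not ``$(M-1)!$ times $G$ times the single transposition $P_{\,1,M+1}$'' as you assert; likewise for $H$. The whole point of the paper's argument is precisely that these sums of transpositions, combined with the $-\frac1M\,P_{\,1,M+1}$ and $+\frac1N\,P_{\,M+2,M+N+2}$ terms produced by \eqref{QQP} and \eqref{QQQ}, reassemble into the $(M+1)$- and $(N+1)$-fold (anti)symmetrizers plus the residual $P_{\,1,M+1}P_{\,M+2,M+N+2}\,G\,H$ term; your version of the collapse assumes essentially what has to be proved, and your final ``identification'' of $-\frac1M P_{\,1,M+1}J_{\ts M+N+2}+\frac1N P_{\,M+2,M+N+2}I_{\ts1}$ with a multiple of $P_{\,1,M+1}P_{\,M+2,M+N+2}\,Q_{\,M+1,M+2}$ is not an identity and is never established. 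The steps you do get right (killing the triple-$Q$ cross term with $I_{\ts M+1}J_{\ts M+2}\,Q_{\,M+1,M+2}=0$, and trading $Q\ts Q$ for $Q\ts P$ via \eqref{QQP}, \eqref{QQQ}) are indeed the paper's starting moves, but the proposal as written would not go through.
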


\begin{proof}
By again using the relation \eqref{QQP} we get the equalities
\begin{align*}
&I_{\ts M+1}\,J_{\ts M+2}\,Q_{\,1,M+N+2}\,Q_{\,M+1,M+N+2}\,Q_{\,1,M+2}=
I_{\ts M+1}\,J_{\ts M+2}\,Q_{\,1,M+N+2}\,P_{\,1,M+1}\,Q_{\,1,M+2}=
\\[12pt]
&I_{\ts M+1}\,J_{\ts M+2}\,Q_{\,1,M+N+2}\,Q_{\,M+1,M+2}\,P_{\,1,M+1}=
I_{\ts M+1}\,J_{\ts M+2}\,Q_{\,M+1,M+2}\,Q_{\,1,M+N+2}\,P_{\,1,M+1}\,.
\end{align*}
The product at the right hand side of these equalities 
vanishes by \eqref{IJQ}.
Hence by opening the parentheses at the left hand side of the equality
in Proposition \ref{L1} and using \eqref{QQP},\eqref{QQQ} we get
\begin{align}
&
I_{\ts2}\ts\ldots\ts I_{\ts M+1}\,
J_{\ts M+2}\ts\ldots\ts J_{\ts M+N+1}\,
G_{\,2\ts\ldots\ts M+1}\,H_{\ts M+2\ts\ldots\ts M+N+1}\,
Q_{\,1,M+N+2}\ \times
\notag
\\[8pt]
&
\Bigl(\ts1-\frac1M\,Q_{\,M+1,M+N+2}+\frac1N\,Q_{\,1,M+2}\ts\Bigr)\,
G_{\,1\ts\ldots\ts M}\,H_{\ts M+3\ts\ldots\ts M+N+2}\,=
\notag
\\[8pt]
&
I_{\ts2}\ts\ldots\ts I_{\ts M+1}\,
J_{\ts M+2}\ts\ldots\ts J_{\ts M+N+1}\,
G_{\,2\ts\ldots\ts M+1}\,H_{\ts M+2\ts\ldots\ts M+N+1}\,
Q_{\,1,M+N+2}\ \times
\notag
\\[8pt]
&
\Bigl(\ts1-\frac1M\,P_{\,1,M+1}+\frac1N\,P_{\,M+2,M+N+2}\ts\Bigr)\,
G_{\,1\ts\ldots\ts M}\,H_{\ts M+3\ts\ldots\ts M+N+2}\,=
\notag
\\[8pt]
&
I_{\ts2}\ts\ldots\ts I_{\ts M+1}\,
J_{\ts M+2}\ts\ldots\ts J_{\ts M+N+1}\,
Q_{\,1,M+N+2}\,
(\,G_{\,2\ts\ldots\ts M+1}\,H_{\ts M+2\ts\ldots\ts M+N+1}
\notag
\\[8pt]
&
-\frac1M\,P_{\,1,M+1}\,
G_{\,1\ts\ldots\ts M}\,H_{\ts M+2\ts\ldots\ts M+N+1}
+\frac1N\,P_{\,M+2,M+N+2}\,
G_{\,2\ts\ldots\ts M+1}\,H_{\ts M+3\ts\ldots\ts M+N+2}\,)\ \times
\notag
\\[8pt]
&G_{\,1\ts\ldots\ts M}\,H_{\ts M+3\ts\ldots\ts M+N+2}\,\ts.
\label{chain}
\end{align}

Observe that 
$G_{\,1\ts\ldots\ts M}^{\,2}=M\ts!\,G_{\,1\ts\ldots\ts M}$
while due to \eqref{GG} the product 
$G_{\,2\ts\ldots\ts M+1}\,G_{\,1\ts\ldots\ts M}$ equals
\begin{align*}
&(\,1-P_{\,2,M+1}-\ldots-P_{\,M,M+1}\,)\,
G^{\,(M-1)}_{\,2\ts\ldots\ts M}\,G_{\,1\ts\ldots\ts M}=
\\[12pt]
&(M-1)\ts!\,(\,1-P_{\,2,M+1}-\ldots-P_{\,M,M+1}\,)\,
G_{\,1\ts\ldots\ts M}\,.
\end{align*}
Similarly
$H_{M+3\ts\ldots\ts M+N+2}^{\,2}=N\ts!\,H_{M+3\ts\ldots\ts M+N+2}$ while 
$H_{\ts M+2\ts\ldots\ts M+N+1}\,H_{\ts M+3\ts\ldots\ts M+N+3}$ equals
\begin{align*}
&(\,1+P_{\,M+2,M+3}+\ldots+P_{\,M+2,M+N+1}\,)\,
H^{\ts(N-1)}_{\ts M+3\ts\ldots\ts M+N+1}\,H_{\ts M+3\ts\ldots\ts M+N+2}=
\\[12pt]
&(N-1)\ts!\,(\,1+P_{\,M+2,M+3}+\ldots+P_{\,M+2,M+N+1}\,)\,
H_{\ts M+3\ts\ldots\ts M+N+2}\,.
\end{align*}
Here we employed \eqref{HH}.
Hence the right hand side of the equalities \eqref{chain}
can be rewritten as 
\begin{equation}
\label{MN}
(M-1)\ts!\,(N-1)\ts!\,
I_{\ts2}\ts\ldots\ts I_{\ts M+1}\,
J_{\ts M+2}\ts\ldots\ts J_{\ts M+N+1}\,
Q_{\,1,M+N+2}
\end{equation}
multiplied on the right by
\begin{align}
&
(\ts(\,1-P_{\,2,M+1}-\ldots-P_{\,M,M+1}\,)\,
(\,1+P_{\,M+2,M+3}+\ldots+P_{\,M+2,M+N+1}\,)
\notag
\\[12pt]
&
-\,P_{\,1,M+1}\,
(\,1+P_{\,M+2,M+3}+\ldots+P_{\,M+2,M+N+1}\,)
\notag
\\[12pt]
&
+P_{\,M+2,M+N+2}\,(\,1-P_{\,2,M+1}-\ldots-P_{\,M,M+1}\,)\ts)\,
G_{\,1\ts\ldots\ts M}\,H_{\ts M+3\ts\ldots\ts M+N+2}\,=
\notag
\\[12pt]
&
(\ts(\,1-P_{\,1,M+1}-\ldots-P_{\,M,M+1}\,)\,
(\,1+P_{\,M+2,M+3}+\ldots+P_{\,M+2,M+N+2}\,)
\notag
\\[12pt]
&
+\,
P_{\,1,M+1}\,P_{\,M+2,M+N+2}\,)\,
G_{\,1\ts\ldots\ts M}\,H_{\ts M+3\ts\ldots\ts M+N+2}\,=
\notag
\\[12pt]
&
\label{GH}
G^{\,(M+1)}_{\,1\ts\ldots\ts M+1}\,H^{\ts(N+1)}_{\ts M+2\ts\ldots\ts M+N+2}
+
P_{\,1,M+1}\,P_{\,M+2,M+N+2}\,
G_{\,1\ts\ldots\ts M}\,H_{\ts M+3\ts\ldots\ts M+N+2}\,.
\end{align}

Due to \eqref{IJ} we have the equality
\begin{equation*}
Q_{\,1,M+N+2}=Q_{\,1,M+N+2}\,(\ts I_{\ts1}+J_{\ts M+N+2}\ts)\,.
\end{equation*}
Therefore by multiplying 
\eqref{MN} by the first summand at the
right hand side of 
\eqref{GH} we get
\begin{align*}
&
(M-1)\ts!\,(N-1)\ts!\,Q_{\,1,M+N+2}\,
(\,I_{\ts1}\ts\ldots\ts I_{\ts M+1}\,
J_{\ts M+2}\ts\ldots\ts J_{\ts M+N+1}
\\[8pt]
&
+\,I_{\ts2}\ts\ldots\ts I_{\ts M+1}\,
J_{\ts M+2}\ts\ldots\ts J_{\ts M+N+2}\,)\,
G^{\,(M+1)}_{\,1\ts\ldots\ts M+1}\,H^{\ts(N+1)}_{\ts M+2\ts\ldots\ts M+N+2}\,.
\end{align*}
The latter product vanishes because zero is the only
antisymmetric tensor in the subspaces
\begin{equation*}
(\CC^{M|\ts0})^{\ot\ts(M+1)}\subset(\CMN)^{\ot\ts(M+1)}
\quad\text{and}\quad
(\CC^{\,0\ts|N})^{\ot\ts(N+1)}\subset(\CMN)^{\ot\ts(N+1)}\,.
\end{equation*}
Multiplying 
\eqref{MN} by the second summand at the
right hand side of the equalities \eqref{GH} we get
\begin{align*}
&(M-1)\ts!\,(N-1)\ts!\,
I_{\ts2}\ts\ldots\ts I_{\ts M+1}\,
J_{\ts M+2}\ts\ldots\ts J_{\ts M+N+1}\,
Q_{\,1,M+N+2}\ \times
\\[12pt]
&P_{\,1,M+1}\,P_{\,M+2,M+N+2}\,
G_{\,1\ts\ldots\ts M}\,H_{\ts M+3\ts\ldots\ts M+N+2}
\end{align*}
which is equal to the product at the right hand side of the equality 
stated in Proposition \ref{L1}.
\end{proof}

\begin{Proposition}
\label{P}
For any positive integer $n$ 
we have equalities 
of series in $u$ with coefficients in the algebra
$(\End\CMN)^{\ot\ts n}\ot\YMN$
\begin{align*}
&(G^{\ts(n)}\ot1)\,T_1(u)\ts\ldots\ts T_n(u-n+1)\,=
T_n(u-n+1)\ts\ldots\ts T_1(u)\,(G^{\ts(n)}\ot1)\,,
\\[8pt] 
&(H^{(n)}\ot1)\,\TS_1(u)\ts\ldots\ts\TS_n(u+n-1)\,= 
\TS_n(u+n-1)\ts\ldots\ts\TS_1(u)\,(H^{\ts(n)}\ot1)\,.
\end{align*}
\end{Proposition}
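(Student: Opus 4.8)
The plan is to prove both equalities by induction on $n$, the case $n=1$ being trivial because $G^{(1)}=H^{(1)}=1$. The two statements are completely parallel, so I describe the argument for the first one; the second is obtained by repeating it with $T$ replaced by $\TS$, with the defining relation \eqref{3.3} replaced by \eqref{3.333}, with the recursion \eqref{GG} replaced by \eqref{HH}, and with the Yang R-matrix evaluated at negative integers, so that $R(-1)=1+P$ takes over the role played by $R(1)=1-P$ below.

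The first ingredient is a transport relation. Put $\mathcal R_n=R_{1,n}(n-1)\,R_{2,n}(n-2)\cdots R_{n-1,n}(1)\in(\End\CMN)^{\ot\ts n}$. I claim that
\begin{equation*}
(\mathcal R_n\ot1)\,T_1(u)\,T_2(u-1)\cdots T_n(u-n+1)=T_n(u-n+1)\,T_1(u)\,T_2(u-1)\cdots T_{n-1}(u-n+2)\,(\mathcal R_n\ot1)\,.
\end{equation*}
This is proved by transporting the factor $T_n(u-n+1)$ leftwards past $T_{n-1}(u-n+2),\ldots,T_1(u)$ one factor at a time. When $T_n(u-n+1)$ is to pass $T_j(u-j+1)$ one invokes \eqref{3.3} with spectral-parameter difference $(u-j+1)-(u-n+1)=n-j$; the $R$-matrix appearing there is $R_{j,n}(n-j)$, which is precisely the factor of $\mathcal R_n$ indexed by $j$, and at that stage it can be slid next to $T_j(u-j+1)\,T_n(u-n+1)$ because $R_{j,n}$ has $\ZZ_2$-degree $0$ and acts in tensor legs disjoint from the leg of each $T_k(u-k+1)$ with $k\notin\{j,n\}$. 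Keeping track of where the factors $R_{j,n}$ come to rest shows that they reassemble on the right into $\mathcal R_n$ again; this is the only bookkeeping involved.

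The second ingredient is the fusion formula for the antisymmetriser, $G^{(n)}=(G^{(n-1)}\ot1)\,\mathcal R_n$. It follows by a subsidiary induction on $n$ from the recursion \eqref{GG} and the Yang-Baxter equation \eqref{3.6}, and is the exact super-counterpart of the classical computation (cf.\ \cite{MNO}); it is undisturbed by the $\ZZ_2$-grading because the operators $P_{j,k}$ realise the relations of the symmetric group and $P^{\ts2}=1$. Granting this, one multiplies the transport relation on the left by $G^{(n-1)}\ot1$: the left side becomes $(G^{(n)}\ot1)\,T_1(u)\cdots T_n(u-n+1)$ by the fusion formula; on the right side $G^{(n-1)}\ot1$, acting only in the first $n-1$ tensor legs, commutes past $T_n(u-n+1)$, after which the induction hypothesis replaces $(G^{(n-1)}\ot1)\,T_1(u)\cdots T_{n-1}(u-n+2)$ by $T_{n-1}(u-n+2)\cdots T_1(u)\,(G^{(n-1)}\ot1)$, and one more application of the fusion formula contracts $(G^{(n-1)}\ot1)\,\mathcal R_n$ back into $G^{(n)}\ot1$. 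This yields exactly the asserted identity, and likewise for $\TS$ and $H^{(n)}$. The step I expect to be the main obstacle is the fusion formula $G^{(n)}=(G^{(n-1)}\ot1)\,\mathcal R_n$ together with its $H^{(n)}$-analogue: it is classical, but one must confirm that the Yang-Baxter rearrangements behind it are not affected by the passage to the super-setting, which indeed they are not since \eqref{3.6} and $P^{\ts2}=1$ hold verbatim.
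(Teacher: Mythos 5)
Your argument is correct and is essentially the paper's own proof: your fusion formula $G^{\ts(n)}=(G^{\ts(n-1)}\ot1)\,R_{1,n}(n-1)\cdots R_{n-1,n}(1)$ is exactly the inductive form of the antisymmetriser factorization \eqref{R1},\eqref{RG} (cited there from \cite[Section~2.3]{MNO}), and your transport lemma is just the organized version of "repeatedly using \eqref{3.3}" (respectively \eqref{3.333} with \eqref{RH} for the $H^{(n)}$ case). The only difference is presentational: you make the induction and the bookkeeping of the $R$-factors explicit, while the paper invokes the closed-form product \eqref{RG} directly.
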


\begin{proof}
If $1\le i<j\le n$ then let $\si_{ij}\in\Sym_n$ be the transposition of
the numbers $i$ and $j\,$. 
There is a well known identity in the symmetric group ring $\CC\ts\Sym_n$ 
\begin{equation}
\label{R1}
\sum_{\si\in\Sym_n}(-1)^\si\,\si\,=\,
\prod_{j=2}^n\,\Bigl(\ \prod_{i=1}^{j-1}\,\,\Bigl(\,1-\frac{\si_{ij}}{j-i}\,
\Bigr)\Bigr)
\end{equation}
where the factors at the right hand side are arranged from left to right
as $i$ and $j$ are increasing, see
for instance \cite[Section~2.3]{MNO}.
The identity implies the relation in the algebra $(\End\CMN)^{\ot\ts n}$
\begin{equation}
\label{RG}
G^{\ts(n)}\,=\,
\prod_{j=2}^n\,\Bigl(\ \prod_{i=1}^{j-1}\,\,R_{\ts ij}(j-i)\,\Bigr)\Bigr)\,.
\end{equation}
The first equality stated in Proposition \ref{P} follows from this relation
by repeatedly using \eqref{3.3}.

Further, \eqref{R1} is equivalent to another well known
identity in $\CC\ts\Sym_n$
\begin{equation*}
\sum_{\si\in\Sym_n}\si\,=\,
\prod_{j=2}^n\,\Bigl(\ \prod_{i=1}^{j-1}\,\,\Bigl(\,1+\frac{\si_{ij}}{j-i}\,
\Bigr)\Bigr)
\end{equation*}
which implies the relation in the algebra $(\End\CMN)^{\ot\ts n}$
\begin{equation}
\label{RH}
H^{(n)}\,=\,
\prod_{j=2}^n\,\Bigl(\ \prod_{i=1}^{j-1}\,\,R_{\ts ij}(i-j)\,\Bigr)\Bigr)\,.
\end{equation}
The second equality in Proposition \ref{P} follows from this relation
by repeatedly using \eqref{3.333}.
\end{proof}

We will now prove Theorem \ref{BZ}. 
Using the relations \eqref{3.3},\eqref{3.333} and \eqref{trater} 
we get an equality of series in $u$ with coefficients 
in the algebra $(\End\CMN)^{\ot\ts(M+N+2)}\ot\YMN$
\begin{align}
&(\ts\RS_{\,1,M+2}\ts(M\ts)\,
R_{\,1M}\ts(M-1)\ts\ldots\ts R_{\,12}\ts(1)\ot1\ts)\ \times
\notag
\\[8pt]
&(\ts\RS_{\,M+1,M+N+2}\ts(-N\ts)\,
R_{\,M+3,M+N+2}\ts(1-N)\ts\ldots\ts R_{\,M+N+1,M+N+2}\ts(-1)\ot1\ts)\ \times
\notag
\\[8pt]
&T_1\ts(u+M-N)\,T_2\ts(u+M-N-1)\ts\ldots\ts T_{M}\ts(u-N+1)\,
\TS_{M+2}\ts(u-N)\ \times
\notag
\\[8pt]
&T_{M+1}\ts(u-N)\,
\TS_{M+3}\ts(u-N+1)\ts\ldots\ts\TS_{M+N+1}\ts(u-1)\,\TS_{M+N+2}\ts(u)\,=
\notag
\\[8pt]
&T_2\ts(u+M-N-1)\ts\ldots\ts T_{M}\ts(u-N+1)\,
\TS_{M+2}\ts(u-N)\,T_1\ts(u+M-N)\ \times
\notag
\\[8pt]
&\TS_{M+N+2}\ts(u)\,T_{M+1}\ts(u-N)\,
\TS_{M+3}\ts(u-N+1)\ts\ldots\ts\TS_{M+N+1}\ts(u-1)\ \times
\notag
\\[8pt]
&(\ts\RS_{\,1,M+2}\ts(M\ts)\,
R_{\,1M}\ts(M-1)\ts\ldots\ts R_{\,12}\ts(1)\ot1\ts)\ \times
\notag
\\[8pt]
&(\ts\RS_{\,M+1,M+N+2}\ts(-N\ts)\,
R_{\,M+3,M+N+2}\ts(1-N)\ts\ldots\ts R_{\,M+N+1,M+N+2}\ts(-1)\ot1\ts)\,.
\label{E1}
\end{align}

Let us multiply both sides of the equality \eqref{E1}
respectively on the left and on the right by
\begin{equation*}
I_{\ts2}\ts\ldots\ts I_{\ts M+1}\,
J_{\ts M+2}\ts\ldots\ts J_{\ts M+N+1}\,
G_{\,2\ts\ldots\ts M}^{\,(M-1)}\,H_{\ts M+3\ts\ldots\ts M+N+1}^{\,(N-1)}\,
Q_{\,1,M+N+2}\ot1
\end{equation*}
and by $P_{\,1,M+1}\,P_{\,M+2,M+N+2}\ot1\,$. 
Due to \eqref{RG} and \eqref{RH} we have in 
$(\End\CMN)^{\ot\ts(M+N+2)}$
\begin{align}
\label{GM}
&G_{\,2\ts\ldots\ts M}^{\,(M-1)}\,R_{\,1M}\ts(M-1)\ts\ldots\ts R_{\,12}\ts(1)=
G_{\,1\ts\ldots\ts M}\,,
\\[8pt]
\label{HN}
&H_{\ts M+3\ts\ldots\ts M+N+1}^{\,(N-1)}\,
R_{\,M+3,M+N+2}\ts(1-N)\ts\ldots\ts R_{\,M+N+1,M+N+2}\ts(-1)=
H_{\ts M+3\ts\ldots\ts M+N+2}\ts\,.
\end{align}
Hence after the multiplication the left hand side of \eqref{E1} becomes
\begin{align}
&
(\ts I_{\ts2}\ts\ldots\ts I_{\ts M+1}\,
J_{\ts M+2}\ts\ldots\ts J_{\ts M+N+1}\,Q_{\,1,M+N+2}\,
\RS_{\,1,M+2}\ts(M\ts)\,
\RS_{\,M+1,M+N+2}\ts(-N\ts)\ot1\ts)\ \times
\notag
\\[8pt]
&
(\ts G_{\,1\ts\ldots\ts M}\ot1\ts)\,
T_1\ts(u+M-N)\,T_2\ts(u+M-N-1)\ts\ldots\ts T_{M}\ts(u-N+1)\,
\TS_{M+2}\ts(u-N)\ \times
\notag
\\[8pt]
&
(\ts H_{\ts M+3\ts\ldots\ts M+N+2}\ot1\ts)\,
T_{M+1}\ts(u-N)\,
\TS_{M+3}\ts(u-N+1)\ts\ldots\ts\TS_{M+N+1}\ts(u-1)\,\TS_{M+N+2}\ts(u)\ \times
\notag
\\[8pt]
&(\ts P_{\,1,M+1}\,P_{\,M+2,M+N+2}\ot1\ts)\,. 
\label{E2}
\end{align}
After the same multiplication the right hand side of \eqref{E1} becomes
\begin{align}
&(\ts I_{\ts2}\ts\ldots\ts I_{\ts M+1}\,
J_{\ts M+2}\ts\ldots\ts J_{\ts M+N+1}\,
G_{\,2\ts\ldots\ts M}^{\,(M-1)}\,H_{\ts M+3\ts\ldots\ts M+N+1}^{\,(N-1)}\,
Q_{\,1,M+N+2}\ot1\ts)\ \times
\notag
\\[8pt]
&T_2\ts(u+M-N-1)\ts\ldots\ts T_{M}\ts(u-N+1)\,
\TS_{M+2}\ts(u-N)\,T_1\ts(u+M-N)\ \times
\notag
\\[8pt]
&\TS_{M+N+2}\ts(u)\,T_{M+1}\ts(u-N)\,
\TS_{M+3}\ts(u-N+1)\ts\ldots\ts\TS_{M+N+1}\ts(u-1)\ \times
\notag
\\[8pt]
&(\ts\RS_{\,1,M+2}\ts(M\ts)\,
R_{\,1M}\ts(M-1)\ts\ldots\ts R_{\,12}\ts(1)\ot1\ts)\ \times
\notag
\\[8pt]
&(\ts\RS_{\,M+1,M+N+2}\ts(-N\ts)\,
R_{\,M+3,M+N+2}\ts(1-N)\ts\ldots\ts R_{\,M+N+1,M+N+2}\ts(-1)\ot1\ts)\ \times
\notag
\\[8pt]
&(\ts P_{\,1,M+1}\,P_{\,M+2,M+N+2}\ot1\ts)\,. 
\label{E4}
\end{align}

Now recall that the \emph{supertrace} on $\End\CMN$ is the linear function
defined by the assignment
\begin{equation*}
\str:E_{ij}\mapsto\de_{ij}\,(-1)^{\,\bi}\,.
\end{equation*} 
For any homogeneous elements $X,X'\in\End\CMN$ we have the equality
\begin{equation*}
\str\hspace{0.5pt}(X\ts X'\ts)=
\str\hspace{0.5pt}(X'X\ts)\,(-1)^{\ts\deg X\deg X^\prime}.
\end{equation*} 
Let us define a linear map
\begin{equation}
\label{STR}
(\End\CMN)^{\ot\ts(M+N+2)}\ot\YMN\to(\End\CMN)^{\ot\ts2}\ot\YMN
\end{equation}
as applying $\str$ to all tensor factors of 
$(\End\CMN)^{\ot\ts(M+N+2)}$ except the first and the last~ones.
We will relate elements of the source vector space in \eqref{STR}  
by the symbol $\sim$ if
their images by this map are the same.
We extend the relation
$\sim$ to series in $u$ with coefficients in the source.

By Proposition \ref{P} the product displayed in the four lines of \eqref{E2}
is divisible on the right by 
$
G_{\,2\ts\ldots\ts M+1}\,H_{\ts M+2\ts\ldots\ts M+N+1}\ot1\,.
$
Therefore \eqref{E2} is related by $\sim$ to the product
\begin{align*}
&
(\ts I_{\ts2}\ts\ldots\ts I_{\ts M+1}\,
J_{\ts M+2}\ts\ldots\ts J_{\ts M+N+1}\,
G_{\,2\ts\ldots\ts M+1}\,H_{\ts M+2\ts\ldots\ts M+N+1}\,
Q_{\,1,M+N+2}\ot1\ts)\ \times
\notag
\\[8pt]
&(\RS_{\,1,M+2}\ts(M\ts)\,\RS_{\,M+1,M+N+2}\ts(-N\ts)\,
G_{\,1\ts\ldots\ts M}\,H_{\ts M+3\ts\ldots\ts M+N+2}\ot1\ts)\ \times
\notag
\\[8pt]
&
T_1\ts(u+M-N)\,T_2\ts(u+M-N-1)\ts\ldots\ts T_{M}\ts(u-N+1)\,
\TS_{M+2}\ts(u-N)\ \times
\notag
\\[8pt]
&
T_{M+1}\ts(u-N)\,
\TS_{M+3}\ts(u-N+1)\ts\ldots\ts\TS_{M+N+1}\ts(u-1)\,\TS_{M+N+2}\ts(u)\ \times
\notag
\\[4pt]
&(\ts P_{\,1,M+1}\,P_{\,M+2,M+N+2}\ot1\ts)\,\frac1{M\ts!\,N\ts!}\,\ts. 
\label{E3}
\end{align*}
Due to the definition \eqref{rutinv} and to
Proposition \ref{L1} the latter product equals
\begin{align*}
&
(\ts P_{\,1,M+1}\,P_{\,M+2,M+N+2}\ot1\ts)\ \times
\\[10pt]
&
(\ts I_{\ts1}\ts\ldots\ts I_{\ts M}\,
J_{\ts M+3}\ts\ldots\ts J_{\ts M+N+2}\,
G_{\,1\ts\ldots\ts M}\,H_{\ts M+3\ts\ldots\ts M+N+2}\,\ts
Q_{\,M+1,M+2}\ot1\ts)\ \times
\\[8pt]
&
T_1\ts(u+M-N)\,T_2\ts(u+M-N-1)\ts\ldots\ts T_{M}\ts(u-N+1)\,
\TS_{M+2}\ts(u-N)\ \times
\\[8pt]
&
T_{M+1}\ts(u-N)\,
\TS_{M+3}\ts(u-N+1)\ts\ldots\ts\TS_{M+N+1}\ts(u-1)\,\TS_{M+N+2}\ts(u)\ \times
\\[6pt]
&(\ts P_{\,1,M+1}\,P_{\,M+2,M+N+2}\ot1\ts)\,\frac1{M\,N}\,=
(\ts P_{\,1,M+1}\,P_{\,M+2,M+N+2}\ot1\ts)\ \times
\\[6pt]
&
(\ts I_{\ts1}\ts\ldots\ts I_{\ts M}\,
J_{\ts M+3}\ts\ldots\ts J_{\ts M+N+2}\,
G_{\,1\ts\ldots\ts M}\,H_{\ts M+3\ts\ldots\ts M+N+2}\,\ts
Q_{\,M+1,M+2}\ot1\ts)\ \times
\\[12pt]
&
T_1\ts(u+M-N)\,T_2\ts(u+M-N-1)\ts\ldots\ts T_{M}\ts(u-N+1)\ \times
\\[4pt]
&
\TS_{M+3}\ts(u-N+1)\ts\ldots\ts\TS_{M+N+1}\ts(u-1)\,\TS_{M+N+2}\ts(u)\,
(\ts P_{\,1,M+1}\,P_{\,M+2,M+N+2}\ot1\ts)\,\frac1{M\,N}\,\ts.
\end{align*}
Here we have used 
\eqref{QTT}. Since $G$ and $H$ antisymmetrize the subspaces \eqref{SS},
by applying our map \eqref{STR} to the right hand side
of this equality and using the definition of $B(u+1)$ we get 
\begin{equation}
\label{BQ}
(-1)^N\,(M-1)\ts!\,(N-1)\ts!\,Q\ot B(u+1)\,.
\end{equation} 

Let us now consider the product \eqref{E4} which 
is equal to \eqref{E2} due to \eqref{E1}.
By again using Proposition \ref{P} 
the product \eqref{E4} can be rewritten as
\begin{align*}
&(\ts I_{\ts2}\ts\ldots\ts I_{\ts M+1}\,
J_{\ts M+2}\ts\ldots\ts J_{\ts M+N+1}\,
G_{\,2\ts\ldots\ts M}^{\,(M-1)}\,H_{\ts M+3\ts\ldots\ts M+N+1}^{\,(N-1)}\,
Q_{\,1,M+N+2}\ot1\ts)\ \times
\\[8pt]
&T_2\ts(u+M-N-1)\ts\ldots\ts T_{M}\ts(u-N+1)\,
\TS_{M+2}\ts(u-N)\,T_1\ts(u+M-N)\ \times
\\[8pt]
&\TS_{M+N+2}\ts(u)\,T_{M+1}\ts(u-N)\,
\TS_{M+3}\ts(u-N+1)\ts\ldots\ts\TS_{M+N+1}\ts(u-1)\ \times
\\[8pt]
&(\ts\RS_{\,1,M+2}\ts(M\ts)\,G_{\,2\ts\ldots\ts M}^{\,(M-1)}\,
R_{\,1M}\ts(M-1)\ts\ldots\ts R_{\,12}\ts(1)\ot1\ts)\ \times
\\[8pt]
&(\ts\RS_{\,M+1,M+N+2}\ts(-N\ts)\,H_{\ts M+3\ts\ldots\ts M+N+1}^{\,(N-1)}\,
R_{\,M+3,M+N+2}\ts(1-N)\ts\ldots\ts R_{\,M+N+1,M+N+2}\ts(-1)\ot1\ts)\ \times
\\[6pt]
&(\ts P_{\,1,M+1}\,P_{\,M+2,M+N+2}\ot1\ts)\,
\frac1{(M-1)\ts!\,(N-1)\ts!}\,\ts. 
\end{align*}

By again using \eqref{GM} and \eqref{HN} the latter product equals
\begin{align*}
&(\ts I_{\ts2}\ts\ldots\ts I_{\ts M+1}\,
J_{\ts M+2}\ts\ldots\ts J_{\ts M+N+1}\,
G_{\,2\ts\ldots\ts M}^{\,(M-1)}\,H_{\ts M+3\ts\ldots\ts M+N+1}^{\,(N-1)}\,
Q_{\,1,M+N+2}\ot1\ts)\ \times
\\[8pt]
&T_2\ts(u+M-N-1)\ts\ldots\ts T_{M}\ts(u-N+1)\,
\TS_{M+2}\ts(u-N)\,T_1\ts(u+M-N)\ \times
\\[8pt]
&\TS_{M+N+2}\ts(u)\,T_{M+1}\ts(u-N)\,
\TS_{M+3}\ts(u-N+1)\ts\ldots\ts\TS_{M+N+1}\ts(u-1)\ \times
\\[8pt]
&(\ts\RS_{\,1,M+2}\ts(M\ts)\,\RS_{\,M+1,M+N+2}\ts(-N\ts)\,
G_{\,1\ts\ldots\ts M}\,H_{\ts M+3\ts\ldots\ts M+N+2}\ot1\ts)\ \times
\\[4pt]
&(\ts P_{\,1,M+1}\,P_{\,M+2,M+N+2}\ot1\ts)\,
\frac1{(M-1)\ts!\,(N-1)\ts!}\,= 
\\[4pt]
&(\ts I_{\ts2}\ts\ldots\ts I_{\ts M+1}\,
J_{\ts M+2}\ts\ldots\ts J_{\ts M+N+1}\,
G_{\,2\ts\ldots\ts M}^{\,(M-1)}\,H_{\ts M+3\ts\ldots\ts M+N+1}^{\,(N-1)}\,
Q_{\,1,M+N+2}\ot1\ts)\ \times
\\[8pt]
&T_2\ts(u+M-N-1)\ts\ldots\ts T_{M}\ts(u-N+1)\,
\TS_{M+2}\ts(u-N)\,T_1\ts(u+M-N)\ \times
\\[8pt]
&\TS_{M+N+2}\ts(u)\,T_{M+1}\ts(u-N)\,
\TS_{M+3}\ts(u-N+1)\ts\ldots\ts\TS_{M+N+1}\ts(u-1)\ \times
\\[8pt]
&(\ts\RS_{\,1,M+2}\ts(M\ts)\,\RS_{\,M+1,M+N+2}\ts(-N\ts)\ot1\ts)\ \times
\\[4pt]
&(\ts P_{\,1,M+1}\,P_{\,M+2,M+N+2}\,
G_{\,2\ts\ldots\ts M+1}\,H_{\ts M+2\ts\ldots\ts M+N+1}
\ot1\ts)\,
\frac1{(M-1)\ts!\,(N-1)\ts!}\,\sim
\\[4pt]
&(\ts I_{\ts2}\ts\ldots\ts I_{\ts M+1}\,
J_{\ts M+2}\ts\ldots\ts J_{\ts M+N+1}\,
G_{\,2\ts\ldots\ts M+1}\,H_{\ts M+2\ts\ldots\ts M+N+1}\,Q_{\,1,M+N+2}
\ot1\ts)\ \times
\\[8pt]
&T_2\ts(u+M-N-1)\ts\ldots\ts T_{M}\ts(u-N+1)\,
\TS_{M+2}\ts(u-N)\,T_1\ts(u+M-N)\ \times
\\[8pt]
&\TS_{M+N+2}\ts(u)\,T_{M+1}\ts(u-N)\,
\TS_{M+3}\ts(u-N+1)\ts\ldots\ts\TS_{M+N+1}\ts(u-1)\ \times
\\[8pt]
&(\ts\RS_{\,1,M+2}\ts(M\ts)\,\RS_{\,M+1,M+N+2}\ts(-N\ts)\,
P_{\,1,M+1}\,P_{\,M+2,M+N+2}\ot1\ts)\,=
\\[8pt]
&(\ts I_{\ts2}\ts\ldots\ts I_{\ts M+1}\,
J_{\ts M+2}\ts\ldots\ts J_{\ts M+N+1}\,
G_{\,2\ts\ldots\ts M+1}\,H_{\ts M+2\ts\ldots\ts M+N+1}\ot Z(u)\ts)\ \times
\\[8pt]
&T_2\ts(u+M-N-1)\ts\ldots\ts T_{M}\ts(u-N+1)\,
\TS_{M+2}\ts(u-N)\ \times
\\[8pt]
&T_{M+1}\ts(u-N)\,
\TS_{M+3}\ts(u-N+1)\ts\ldots\ts\TS_{M+N+1}\ts(u-1)\ \times
\\[8pt]
&(\ts Q_{\,1,M+N+2}\,\RS_{\,1,M+2}\ts(M\ts)\,\RS_{\,M+1,M+N+2}\ts(-N\ts)\,
P_{\,1,M+1}\,P_{\,M+2,M+N+2}\ot1\ts)\,\ts.
\end{align*}
To obtain the last equality we used the definition of the series $Z(u)$
and the centrality in $\YMN$ of the coefficients of this serties, see the proof
of Proposition \ref{zu}.

Let us denote by $S(u)$ the product in the latter four displayed lines.
It is related by $\sim$ to the product \eqref{E4} which is equal to \eqref{E2}. 
We have already proved that 
the image of \eqref{E2} under our map \eqref{STR} is equal to \eqref{BQ}.
Hence the image of $S(u)$ is also equal to \eqref{BQ}.
In particular, the image of $S(u)$ under \eqref{STR}
does not change if we multiply this image on the right by
\begin{equation*}
(\ts I\ot1+1\ot J\ts)\ot1\in
(\End\CMN)^{\ot\ts2}\ot\YMN\,,
\end{equation*}
see \eqref{IJ}. Equivalently, the product $S(u)$
is related by $\sim$ to itself multiplied
on the right by
\begin{equation}
\label{IJ1}
(\ts I_{\ts1}+J_{\ts M+N+2}\ts)\ot1\in
(\End\CMN)^{\ot\ts(M+N+2)}\ot\YMN\,.
\end{equation}

\vbox{
Let us now right multiply $S(u)$ by \eqref{IJ1} and also by the element
\begin{equation}
\label{IJ2}
I_{\ts2}\ts\ldots\ts I_{\ts M+1}\,
J_{\ts M+2}\ts\ldots\ts J_{\ts M+N+1}\ot1\in
(\End\CMN)^{\ot\ts(M+N+2)}\ot\YMN\,.
\end{equation}
The result is still related by $\sim$ to $S(u)$ because \eqref{IJ2}
is a projector dividing $S(u)$ on the left. 
However, by multiplying the last line of $S(u)$ 
by both \eqref{IJ1} and \eqref{IJ2} we obtain the product
\begin{align*}
&(\ts Q_{\,1,M+N+2}\,\RS_{\,1,M+2}\ts(M\ts)\,\RS_{\,M+1,M+N+2}\ts(-N\ts)\,
P_{\,1,M+1}\,P_{\,M+2,M+N+2}\ot1\ts)\ \times
\\[8pt]
&(\ts(\ts I_{\ts1}+J_{\ts M+N+2}\ts)\,
I_{\ts2}\ts\ldots\ts I_{\ts M+1}\,
J_{\ts M+2}\ts\ldots\ts J_{\ts M+N+1}\ot1\ts)\,=
\\[8pt]
&(\ts Q_{\,1,M+N+2}\,\RS_{\,1,M+2}\ts(M\ts)\,\RS_{\,M+1,M+N+2}\ts(-N\ts)
\ot1\ts)\ \times
\\[8pt]
&
(\ts I_{\ts1}\ts\ldots\ts I_{\ts M}\,(\ts I_{\ts M+1}+J_{\ts M+2}\ts)\,
J_{\ts M+3}\ts\ldots\ts J_{\ts M+N+2}\,
P_{\,1,M+1}\,P_{\,M+2,M+N+2}\ot1\ts)\,.
\end{align*}Due to the definition \eqref{rutinv} and to
Proposition \ref{L2} the latter product equals
\begin{align*}
&(\ts I_{\ts2}\ts\ldots\ts I_{\ts M+1}\,
J_{\ts M+2}\ts\ldots\ts J_{\ts M+N+1}\ot1\ts)\ \times
\\[6pt]
&(\ts Q_{\,1,M+N+2}\,
\Bigl(-\,\frac1M\,P_{\,1,M+1}\,J_{\ts M+N+2}+
\frac1N\,P_{\,M+2,M+N+2}\,I_{\ts1}\ts\Bigr)
\,P_{\,1,M+1}\,P_{\,M+2,M+N+2}\ot1\ts)\ =
\\[8pt]
&(\ts I_{\ts2}\ts\ldots\ts I_{\ts M+1}\,
J_{\ts M+2}\ts\ldots\ts J_{\ts M+N+1}\ot1\ts)\ \times
\\[6pt]
&(\ts Q_{\,1,M+N+2}\,
\Bigl(-\,\frac1M\,P_{\,M+2,M+N+2}\,J_{\ts M+2}+
\frac1N\,P_{\,1,M+1}\,I_{\ts M+1}\ts\Bigr)\ot1\ts)\,\ts.
\end{align*}
Therefore $S(u)$ is related by $\sim$ to
\begin{align}
&(\ts I_{\ts2}\ts\ldots\ts I_{\ts M+1}\,
J_{\ts M+2}\ts\ldots\ts J_{\ts M+N+1}\,
G_{\,2\ts\ldots\ts M+1}\,H_{\ts M+2\ts\ldots\ts M+N+1}\ot Z(u)\ts)\ \times
\notag
\\[8pt]
&T_2\ts(u+M-N-1)\ts\ldots\ts T_{M}\ts(u-N+1)\,
\TS_{M+2}\ts(u-N)\ \times
\notag
\\[8pt]
&T_{M+1}\ts(u-N)\,
\TS_{M+3}\ts(u-N+1)\ts\ldots\ts\TS_{M+N+1}\ts(u-1)\ \times
\notag
\\[8pt]
&(\ts I_{\ts2}\ts\ldots\ts I_{\ts M+1}\,
J_{\ts M+2}\ts\ldots\ts J_{\ts M+N+1}\ot1\ts)\ \times
\notag
\\[4pt]
&(\ts Q_{\,1,M+N+2}\,
\Bigl(-\,\frac1M\,P_{\,M+2,M+N+2}\,J_{\ts M+2}+
\frac1N\,P_{\,1,M+1}\,I_{\ts M+1}\ts\Bigr)\ot1\ts)\,=
\notag
\\[6pt]
&(\ts I_{\ts2}\ts\ldots\ts I_{\ts M+1}\,
J_{\ts M+2}\ts\ldots\ts J_{\ts M+N+1}\,
G_{\,2\ts\ldots\ts M+1}\,H_{\ts M+2\ts\ldots\ts M+N+1}\ot 
Z(u)\,B(u)\ts)\ \times
\notag
\\[10pt]
&(\ts I_{\ts2}\ts\ldots\ts I_{\ts M+1}\,
J_{\ts M+2}\ts\ldots\ts J_{\ts M+N+1}\ot1\ts)\ \times
\notag
\\[4pt]
&(\ts Q_{\,1,M+N+2}\,
\Bigl(-\,\frac1M\,P_{\,M+2,M+N+2}\,J_{\ts M+2}+
\frac1N\,P_{\,1,M+1}\,I_{\ts M+1}\ts\Bigr)\ot1\ts)\,\sim
\notag
\\[6pt]
&(\ts I_{\ts2}\ts\ldots\ts I_{\ts M+1}\,
J_{\ts M+2}\ts\ldots\ts J_{\ts M+N+1}\,
G_{\,2\ts\ldots\ts M+1}\,H_{\ts M+2\ts\ldots\ts M+N+1}\ot 
Z(u)\,B(u)\ts)\ \times
\notag
\\[4pt]
&(\ts Q_{\,1,M+N+2}\,
\Bigl(-\,\frac1M\,P_{\,M+2,M+N+2}+
\frac1N\,P_{\,1,M+1}\ts\Bigr)\ot1\ts)\,.
\label{final}
\end{align}
To obtain the equality in \eqref{final} we also used the relation 
\eqref{referee2} which in this instance implies~that
\begin{align*}
&(\ts I_{\ts M+1}\,J_{\ts M+2}\ot1\ts)\,
\TS_{M+2}\ts(u-N)\,T_{M+1}\ts(u-N)\,
(\ts I_{\ts M+1}\,J_{\ts M+2}\ot1\ts)=
\\[6pt]
&(\ts I_{\ts M+1}\,J_{\ts M+2}\ot1\ts)\,
T_{M+1}\ts(u-N)\,\TS_{M+2}\ts(u-N)\,
(\ts I_{\ts M+1}\,J_{\ts M+2}\ot1\ts)\,.
\end{align*}
}

\newpage

We could now show by direct calculation that applying the map \eqref{STR}
to the product in the last two lines of \eqref{final} yields
\begin{equation}
\label{BQZ}
(-1)^N\,(M-1)\ts!\,(N-1)\ts!\,Q\ot Z(u)\,B(u)\,.
\end{equation} 
Theorem \ref{BZ} would then 
follow because the equality of 
\eqref{E2} and \eqref{E4} implies the equality of \eqref{BQ} and \eqref{BQZ}.
However, we will complete the proof of Theorem~\ref{BZ} by an indirect
argument. We have already proved that the image of the product in
the last two lines of \eqref{final} equals~\eqref{BQ}.
Since the image of the action of $Q$ on $(\CMN)^{\ts\ot\ts2}$
is one dimensional, the latter
equality implies that $Z(u)\,B(u)$ equals $B(u+1)$ up to a scalar factor.
This scalar factor is~$1$ because the leading terms of both
series $B(u)\,Z(u)$ and $B(u+1)$ are $1\,$.
Theorem~\ref{BZ} is now proved.
 

\pdfbookmark[1]{References}{ref}

\LastPageEnding


\begin{thebibliography}{99}
\footnotesize\itemsep=0pt
\providecommand{\eprint}[2][]{\href{http://arxiv.org/abs/#2}{arXiv:#2}}

\bibitem{B}
{Berezin F.}, 
{Introduction to superanalysis},
\textit{Mathematical Physics and Applied Mathematics}, Vol. 9,
D.\,Reidel, 
Dordrecht, 1987.

\bibitem{C}
{Cherednik I.},
{A new interpretation of Gelfand-Tzetlin bases}, 
\textit{Duke Math. J.}
\textbf{54} (1987), 563--577.

\bibitem{G1}
{Gow L.}, 
{On the Yangian $Y(\mathfrak{gl}_{m|n})$ and its quantum Berezinian}, 
\textit{Czech. J. Phys.}
\textbf{55} (2005), 1415--1420,
\eprint{math/0501041}. 

\bibitem{G2} 
{Gow L.}, 
{Gauss decomposition of the Yangian $Y(\mathfrak{gl}_{m|n})$},
\textit{Commun. Math. Phys.}
\textbf{276} (2007), 799--825,
\eprint{math/0605219}.



\bibitem{MM}
{Milnor J., Moore J.},
{On the structure of Hopf algebras},
\textit{Ann. Math.}
{\bf 81}
(1965),
211--264.

\bibitem{MNO}
{Molev A., Nazarov M., Olshanski G.},
{Yangians and classical Lie algebras},
\textit{Russian Math. Surveys}
{\bf 51}
(1996),
205--282,
\eprint{hep-th/9409025}.

\bibitem{N1}
{Nazarov M.},
{Quantum Berezinian and the classical Capelli identity},
\textit{Lett. Math. Phys.}
\textbf{21} (1991), 123--131.

\bibitem{N2}
{Nazarov M.},
{Yangian of the queer Lie superalgebra},
\textit{Commun. Math. Physics}
\textbf{208} (1999), 195--223,
\eprint{math/9902146}.


\bibitem{T}
{Tsymbaliuk A.},
{Shuffle algebra realizations of type $A$ super Yangians 
and quantum affine superalgebras for all Cartan data},
\textit{Lett. Math. Phys.} 
\textbf{110} (2020), 2083--2111,
\eprint{1909.13732}.

\end{thebibliography}
\end{document}